\newtheorem{theorem}{Theorem}[section]
\newtheorem{lemma}[theorem]{Lemma}
\newtheorem{definition}{Definition}[section]
\newtheorem{remark}[theorem]{Remark}
\newtheorem{example}[theorem]{Example}
\newtheorem{corollary}[theorem]{Corollary}
\newtheorem{proposition}[theorem]{Proposition}
\numberwithin{equation}{section}
\providecommand{\keywords}[1]{\small \textbf{\textit{Keywords---}} #1}
\providecommand{\MSC}[1]{\small \textbf{\textit{MSC Classification 2020---}} #1}
\title{The Resolvent Mean and The Parametrized $\mathcal{A} \sharp \mathcal{B}$}
\author[1]{Alemeh Sheikhhosseini\thanks{\href{mailto:sheikhhosseini@uk.ac.ir}{sheikhhosseini@uk.ac.ir}}}
\author[2]{Eman Aldabbas \thanks{\href{e\_aldabbas@ju.edu.jo}{e\_aldabbas@ju.edu.jo, aldabbas@ualberta.ca}}}
\author[3]{Mohammad Sababheh \thanks{\href{sababheh@psut.edu.jo}{ sababheh@yahoo.com}}}
\affil[1]{Department of Pure Mathematics, Faculty of Mathematics and Computer, Shahid Bahonar University of Kerman, Kerma 100190, Iran}
\affil[2]{Department of Mathematics, University of Jordan, Amman 11942, Jordan}
\affil[3]{Department of Basic Sciences, Princess Sumaya University for Technology\\ Amman 11941, Jordan}
\date{} 
\begin{document}

\maketitle

\abstract{Resolvent average and weighted \(\mathcal{A}\sharp \mathcal{H}\)-mean have been defined recently for positive definite matrices. Since the class of accretive matrices provides a general framework for addressing certain known results on positive matrices, this paper extends the notions of resolvent average and the weighted \(\mathcal{A}\sharp \mathcal{H}\)-mean to accretive matrices and discusses some of their properties.\\
The obtained results happen to be legitimate generalizations of those known results on positive definite matrices.\\
Among many results, we show that if $A,B$ are positive definite matrices, and $0\leq\lambda\leq 1, \mu>0$, then
\[\mathcal{R}_{\mu}(A,B,1-\lambda,\lambda)+\mu I \geq C \Big(A \sharp_{\bm{\lambda}} B +\mu I\Big),\]
where $\mathcal{R}_{\lambda}$ is the resolvent average,  $\sharp_{\bm{\lambda}}$ is the weighted geometric mean and $I$ is the identity matrix, for some positive constant  $C$; as a new relation between the resolvent average and the geometric mean.}


\keywords{Arithmetic mean, Harmonic mean, Resolvent average, Geometric mean, Matrix mean, Accretive matrix.}


\MSC{47A64, 15B48,15A24, 47A63,47B44}

\maketitle

\section{Introduction }
Let \(\mathcal{M}_n\) denote the algebra of \(n\times n\) complex matrices, with identity matrix \(I\) and zero matrix $O$. A   matrix \(A\in \mathcal{M}_n\) is called positive semi-definite, and is denoted as \(A\geq O\), if \(\langle Ax, x\rangle \geqslant0 \) for all \(x \in \mathbb{C}^n\). If \(\langle Ax, x\rangle > 0 \) for all non-zero \(x \in \mathbb{C}^n\), then \(A\) is called positive definite and is denoted as
\(A>O\).

The class of all positive definite matrices will be denoted by \(\mathcal{M}_n^{++}\). For two Hermitian matrices \(A,B\), we write that \(A\geq B\) (or \( A > B\)) if \( A-B \geq O\) (or \(A-B>O\)). In particular, If $M\in\mathbb{R}$, we write \(A\leq M\) to mean that \(MI -A\geq O\). \\
A linear map $\Phi : \mathcal{M}_n \rightarrow \mathcal{M}_n$ is called positive if
$\Phi(A)\geq O$ when $A\geq O$. If $\Phi(I)=I$, then $\Phi$ is called unital.\\
If \(A\in \mathcal{M}_n\), its numerical range is denoted by \(\mathcal{W}(A)\), and is defined as  \[\mathcal{W}(A)= \{\langle Ax,x \rangle: x\in \mathbb{C}^n \text{ with }\| x\|=1\},\]
which is a convex and compact subset of \(\mathbb{C}\) that contains the spectrum of \(A\), see e.g.\cite{Ball,Davis,Parker}. 
The Cartesian decomposition of \(A\in \mathcal{M}_n\) is \(A= \Re A + i \Im A\), where \(\Re A= \frac{A+A^*}{2},\Im A= \frac{A-A^*}{2 i}\) are the real and imaginary parts of $A$ respectively, and \(A^*\) is the adjoint matrix of \(A\). A matrix \(A\in \mathcal{M}_n\) is called accretive if \(\Re A\in \mathcal{M}_n^{++}\). This condition is equivalent to the fact that the numerical range of \(A\in \mathcal{M}_n\) satisfies \[\mathcal{W}(A)\subset   \{z\in\mathbb{C}: \Re z\geq 0\}.\]
For simplicity, we will write \(A\in \Gamma_n\) to mean that \(A\in\mathcal{M}_n\) is accretive. Clearly, the class \(\Gamma_n\) contains the class \(\mathcal{M}_n^{++}\).\\
As an alternative terminology, the term sectorial matrices is often used in reference to accretive matrices. If, for \(\alpha\in [0,\pi/2)\), we set
\[\mathcal{S}_{\alpha}= \{ z\in \mathbb{C}: \Re(z)\geq 0 \text{ and } \vert \Im(z)\vert \leq \tan(\alpha) \Re(z)\},\]
then, clearly, \(\mathcal{S}_{\alpha}\) is a sector in the right half-plane. A matrix \(A\in \mathcal{M}_n\) is called sectorial if \(\mathcal{W}(A)\subset \mathcal{S}_{\alpha}\) for some \(\alpha\in [0,\pi/2)\). The class of all matrices in \(\mathcal{M}_n\) with numerical range in \(\mathcal{S}_{\alpha}\) will be denoted by \(\Pi_{n,\alpha}\).\\
One can show that \[ A\in \Gamma_n \Leftrightarrow A\in \Pi_{n,\alpha}\, \text{ for some } \alpha\in[0,\pi/2).\]
It is therefore clear that  \[\Gamma_n= \displaystyle \bigcup_{\alpha\in [0,\pi/2)}\, \Pi_{n,\alpha}. \]
Accretive matrices exhibits numerous interesting properties and have recently attracted considerable interest. For more on accretive matrices, we refer the reader to \cite{Drury2014,Drury,bedrani2021positive,bedrani2021weighted,bedrani2021numerical,Furuichi}.\\
Throughout this paper, for a positive unital linear map \(\Phi (\cdot) \) and \( \textbf{A}= (A_1, A_2\cdots, A_m)\), we define \( \Phi(\textbf{A})= (\Phi(A_1), \Phi(A_2), \cdots, \Phi (A_m) ) \) and  \( \Re \textbf{A} = (\Re A_1, \Re A_2,\cdots,  \Re A_m )\). By  \textbf{\(\bm{\bm{\bm{\lambda}}}\)} we mean the \(m\)-vector \( ({\lambda}_1,{\lambda}_2,\cdots,{\lambda}_m)\) where  \({\lambda}_i\in [0,1]\) such that \(\displaystyle \sum_{i=1}^m\, \lambda_i=1\).

A binary operation \(\sigma\) on the class of positive definite matrices is called a connection if the following conditions are satisfied:\begin{itemize}
    \item[1.] \(A\leq C \text{ and } B\leq D \text{, then } A \sigma B \leq C\sigma D\).
    \item[2.] \(C (A \sigma B) C \leq (CAC)\sigma (CBC)\) for any \(C\in \mathcal{M}_n\).
    \item[3.]  \(A_n \downarrow A \text{ and } B_n\downarrow B\), then \((A_n \sigma B_n) \downarrow A \sigma B\). 
\end{itemize}
An operator mean \(\sigma\) in the sense of Kubo-Ando \cite{kubo}  is a normalized connection (i.e, \(I\sigma I = I\)), and is defined as
\begin{equation}\label{matrix mean via f}
   A\sigma B= A^{1/2} f\left(A^{-1/2} B A^{-1/2}\right) A^{1/2}, 
\end{equation}
where \(f: (0,\infty) \rightarrow (0,\infty)\) is an operator monotone function with \(f(1)=1\). In this case, \(f\) is called the representing function of the matrix mean \(\sigma\). 
Recently, Bedrani et al. \cite{bedrani2021positive} showed that \eqref{matrix mean via f} holds for accretive matrices. Among the most well-established means for \( \textbf{A}= (A_1, A_2\cdots, A_m)\)   are the weighted arithmetic mean \(\mathcal{A}_{\bm{\lambda}}\), and the weighted harmonic mean \(\mathcal{H}_{\bm{\lambda}}\) which are defined for \(A_i\in \Gamma_n, \, i=1,2,\cdots,m\),  by
\begin{equation}\label{e01}
\mathcal{H}_{\bm{\lambda}}(\textbf{A})= \left(  \sum _{i=1}^{m}\lambda_i A_{i}^{-1}  \right)^{-1},
\end{equation}
and
\begin{equation}\label{e001}
\mathcal{ A}_{\bm{\lambda}}(\textbf{A} )=  \sum _{i=1}^{m}\lambda_i A_{i}. 
\end{equation}
On the other hand,  the weighted geometric mean \(A\sharp_{\lambda} B\)  of \(A,B \in \Gamma_n\) is given by 
\[A\sharp_{\lambda} B = A^{1/2} \left( A^{-1/2} BA^{-1/2} \right)^{\lambda} A^{1/2},\; \lambda\in [0,1].\]
If \(\bm{\lambda}=(1/2,1/2)\), we simply write \(\mathcal{A}(A,B)\), and \(\mathcal{H}(A,B)\) instead of \(\mathcal{A}_{\bm{\lambda}}(A,B)\) and \( \mathcal{H}_{\bm{\lambda}}(A,B)\). Also, if $\lambda=\frac{1}{2}$, we write \( A\sharp B\)  instead of \(A \sharp_{1/2} B\).\\
The geometric mean was extended to positive semi-definite matrices by Ando \cite{Ando}, and subsequent research has established its significance in matrix inequalities, semi-definite programming (scaling point \cite{Hauser,Nesterov}), and geometry (geodesic midpoint \cite{Bhatia03exp,Bhatia06geo}). One of the key properties of the geometric mean of two positive definite  matrices is that it is defined by a Riccati matrix equation. For any two positive definite matrices \(A,B\), the geometric mean \(A \sharp B\) is the unique positive definite solution of the Riccati matrix equation \(XA^{-1}X=B\). \\
As a result, the geometric mean of \(A,B\) is the metric midpoint of the of arithmetic mean \(\mathcal{A}(A,B)\) and the harmonic mean \(\mathcal{H}(A,B)\) for both the trace metric and the Thompson metric, see \cite{Law}. That is, 
\begin{equation}\label{Intro-G(A,H)=G(A,B)}
    A\sharp B= \mathcal{A}(A,B)\sharp \mathcal{H}(A,B).
\end{equation}
In \cite{Bau1}, Bauschke et al. introduced a very interesting and new notion of proximal average within the field of convex analysis. Associating a positive semi-definite matrix \(A\) with the quadratic convex function \(q_A(x)= \langle Ax,x\rangle\) enables this new average to be extended to positive semi-definite matrices under the name of resolvent average. 

For \(\textbf{A}=(A_1,A_2,\cdots,A_m)\), the (parameterized) resolvent average of $ \textbf{A} $ with weight  $ \bm{\lambda} $ is defined as follows
\[
\mathcal{R} _{\mu}(\textbf{A},\bm{\lambda})= \left(  \sum _{i=1}^{m}\lambda_{i} (A_{i}+ \mu I)^{-1} \right)^{-1}-\mu I, \quad \mu \geq 0,
\]
and for \(\mu = \infty, \mathcal{R} _{\infty}(\textbf{A},\bm{\lambda})= \mathcal{A}_{\bm{\lambda}}(\textbf{A})\).\\
This is motivated by the fact that when $ \mu=1 $\\
$$\left( \mathcal{R} _{1}(A,\bm{\lambda})+I\right)^{-1}= \sum_{i=1}^{m} \lambda_{i}(A_{i}+I)^{-1}, $$
which says that the resolvent of $ \mathcal{R} _{1}(A, \bm{\lambda} ) $ is the arithmetic mean of resolvent of $ A_{1}, \ldots, A_{m} $ with weight $\bm{\lambda}$. \\
Basic properties of the resolvent average have been established. For instance, the resolvent average interpolates between the weighted harmonic and the arithmetic means, which are attained by appropriate limits. For more on the resolvent average we refer the reader to \cite{Bau1,Bau,ku1,ku2}.\\ Closely related mean, called the weighted \(\mathcal{A}\sharp \mathcal{H}\)-mean, has  been introduced in \cite{kim}. Inspired by \eqref{Intro-G(A,H)=G(A,B)}, Kim et al. introduced the geometric mean of weighted \(m\)-variable arithmetic and harmonic means, and called it the weighted \(\mathcal{A}\sharp \mathcal{H}\)-mean. This is defined as   
 
\[\mathcal{L} _{\mu}( \textbf{A},\bm{\lambda})=  \left( \sum_{i=1}^{m} \lambda_{i}(A_{i}+\mu I)\right) \sharp \left( \sum_{i=1}^{m} \lambda_{i}(A_{i}+\mu I)^{-1}\right)^{-1}-\mu I,\, \mu\geq 0,\]
and   
    \[\mathcal{L} _{\mu}( \textbf{A},\bm{\lambda})= \mathcal{L}^{-1} _{-\mu}( \textbf{A}^{-1},\bm{\lambda}),\,\mu<0.\]
Just like the resolvent average, the parametrized weighted \(\mathcal{A}\sharp \mathcal{H}\)-mean interpolates between the weighted harmonic and the arithmetic means and can be recovered by iteration of the arithmetic mean and the resolvent average. \\

The purpose of this paper is to extend the notion of the parametrized resolvent average and the notion of the parameterized \(\mathcal{A}\sharp \mathcal{H}\)-mean from the context of positive definite matrices to accretive ones, and to establish a relation between the resolvent average and the geometric mean of two positive definite matrices. \\

\section{Preliminaries}
In this section, we outline various results needed for the subsequent discussion, all of which are available in the referenced sources. We start this section with findings on matrix means for accretive and positive definite matrices. Then definitions of the parametrized resolvent average and the parametrized weighted \(\mathcal{A}\sharp \mathcal{H}\)-mean for positive definite matrices and their properties are listed.\\

Let \(A,B \in \Gamma_n\). Then the weighted geometric mean of \(A,B\) with weight \(\lambda\in[0,1]\), denoted by \(A\sharp_{\lambda} B\), is defined as follows:
\[A\sharp_{\lambda} B = A^{1/2}\left(A^{-1/2} B A^{-/12} \right)^{\lambda} A^{1/2}.\]
\begin{lemma}\label{Propertiesof the weighted geom mean}\cite{Rai,bedrani2021positive} 
Let \(A,B\in \Gamma_n\) and let \(\lambda\in [0,1]\). Then we have
   \begin{itemize}
       \item[1.] \(A\sharp_{\lambda} A = A\).
       \item[2.] \(A\sharp B= B\sharp A\).
       \item[3.] \(\left(A \sharp_{\lambda} B\right)^{-1}= A^{-1}\sharp_{\lambda} B^{-1}\).
       \item[4.] \((h A)\sharp_{\lambda} (k B) = h^{1-\lambda} k^{\lambda}(A\sharp_{\lambda} B) \).
       \item[5.] If \(A,B,C,D \in M_n^{++}\) such that \(A\leq C, B\leq D\), then \(A\sharp_{\lambda} B \leq C\sharp_{\lambda} D\). 
   \end{itemize} 
\end{lemma}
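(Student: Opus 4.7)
The plan is to verify each of the five properties directly from the definition
\[
A \sharp_\lambda B = A^{1/2}\bigl(A^{-1/2} B A^{-1/2}\bigr)^\lambda A^{1/2},
\]
where for $A \in \Gamma_n$ the fractional powers are defined via the holomorphic functional calculus on a simply connected neighborhood of $\mathcal{W}(A)$ lying in the open right half-plane (so all the powers are unambiguously defined because the relevant spectra avoid the nonpositive real axis). Item 1 is then immediate: setting $B=A$ makes the inner bracket $I^\lambda = I$, leaving $A^{1/2} A^{1/2} = A$.

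For item 3, the key observation is that $A^{-1/2} B A^{-1/2}$ and $A^{1/2} B^{-1} A^{1/2}$ are inverses of one another, which is verified by direct multiplication. Invoking $(M^{-1})^\lambda = M^{-\lambda}$ (valid by the functional calculus since neither spectrum meets the branch cut), I would write
\[
A^{-1} \sharp_\lambda B^{-1} = A^{-1/2}\bigl(A^{1/2} B^{-1} A^{1/2}\bigr)^\lambda A^{-1/2} = A^{-1/2}\bigl(A^{-1/2} B A^{-1/2}\bigr)^{-\lambda} A^{-1/2},
\]
and observe that this is exactly $\bigl(A \sharp_\lambda B\bigr)^{-1}$. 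For item 4, I would pull the positive scalars out of the square roots: $(hA)^{\pm 1/2} = h^{\pm 1/2} A^{\pm 1/2}$, so the inner expression becomes $(k/h)\, A^{-1/2} B A^{-1/2}$, which after raising to the power $\lambda$ and collecting produces the scalar factor $h \cdot (k/h)^\lambda = h^{1-\lambda} k^\lambda$, as required. Item 5 is stated for positive definite $A,B,C,D$ only, so I would appeal to the Kubo--Ando monotonicity: since the representing function $f(t) = t^\lambda$ is operator monotone on $(0,\infty)$, the binary operation $(X,Y) \mapsto X \sharp_\lambda Y$ is jointly monotone on $\mathcal{M}_n^{++}$, and the conclusion follows by combining monotonicity in each slot.

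The step I expect to be the genuine obstacle is item 2, the symmetry $A \sharp B = B \sharp A$ for accretive (and hence generally non-Hermitian) $A,B$. In the positive definite setting one can argue quickly via Riccati's equation $X A^{-1} X = B$ or via the identity $A^{1/2}(A^{-1/2} B A^{-1/2})^{1/2} A^{1/2} = (AB)^{1/2}$ in disguise, but both arguments rely on self-adjointness of the square roots. My preferred route would be to invoke the Bedrani--Fujii--Kittaneh--Moslehian extension of Kubo--Ando theory to the sectorial class (cited as \cite{bedrani2021positive} in the paper): the representing function $f(t) = t^{1/2}$ is self-transposed in the sense that $t\, f(1/t) = f(t)$, and this identity is precisely what encodes $A \sigma B = B \sigma A$ on $\Gamma_n$. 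As a fallback, I would use analytic continuation: both sides are holomorphic in the entries of $A,B$ on the connected open set $\Gamma_n \times \Gamma_n$, and they coincide on the totally real submanifold $\mathcal{M}_n^{++} \times \mathcal{M}_n^{++}$, hence on all of $\Gamma_n \times \Gamma_n$.
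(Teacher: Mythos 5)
The paper does not actually prove this lemma: it is imported verbatim from \cite{Rai,bedrani2021positive}, so there is no in-paper argument to measure you against, and your reconstruction should be judged on its own. It is essentially sound. Items 1, 3 and 4 are indeed direct principal-branch functional-calculus computations once one notes, as you do, that $\sigma(A^{-1/2}BA^{-1/2})=\sigma(A^{-1}B)$ avoids $(-\infty,0]$ for $A,B\in\Gamma_n$, which legitimizes the identities $(M^{-1})^{\lambda}=M^{-\lambda}$ and $(hM)^{\lambda}=h^{\lambda}M^{\lambda}$ for $h>0$. Item 5 is the classical Kubo--Ando monotonicity; the only thing glossed over is that Löwner--Heinz gives monotonicity in the \emph{second} slot directly from the defining formula, whereas monotonicity in the first slot needs the additional identity $A\sharp_{\lambda}B=B\sharp_{1-\lambda}A$ (or the integral representation of the mean), so ``combining monotonicity in each slot'' quietly relies on a form of item 2. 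For item 2 itself, both of your routes are viable: the identity-theorem argument works because $\Gamma_n\times\Gamma_n$ is convex, hence connected, and $\mathcal{M}_n^{++}\times\mathcal{M}_n^{++}$ is an open subset of a maximal totally real subspace of $\mathcal{M}_n\times\mathcal{M}_n$; and the self-transposed representing function $t\,f(1/t)=f(t)$ is how the cited accretive Kubo--Ando theory encodes symmetry. There is, however, a shorter route already in the paper's toolkit: by Drury's Lemma (Lemma \ref{Drury's Lemma}), $H=A\sharp B$ is the unique accretive solution of $HA^{-1}H=B$; inverting this relation gives $HB^{-1}H=A$, so $H$ also solves the Riccati equation characterizing $B\sharp A$, and uniqueness yields $A\sharp B=B\sharp A$ in two lines. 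This is exactly the mechanism the authors themselves exploit in the proof of Theorem \ref{G(A,B)=G(Arith,H)-Accretive}.
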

\begin{lemma}\label{G(A,H)=G(A,B)}\cite{Law}
Let \(A,B\in \mathcal{M}_n^{++}\). Then
\[(A+B)\sharp (A^{-1}+B^{-1})^{-1}= A \sharp B.\]
\end{lemma}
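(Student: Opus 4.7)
\medskip
\noindent\textbf{Proof proposal.} The plan is to use the Riccati characterization of the geometric mean: for $X,Y\in\mathcal{M}_n^{++}$, the mean $X\sharp Y$ is the unique matrix $Z\in\mathcal{M}_n^{++}$ satisfying $ZX^{-1}Z=Y$. Setting $X=A+B$ and $Y=(A^{-1}+B^{-1})^{-1}$, it suffices to exhibit $Z=A\sharp B$ as a positive definite solution of
\[
Z(A+B)^{-1}Z=(A^{-1}+B^{-1})^{-1},
\]
and invoke uniqueness.

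First, I would recall that $C:=A\sharp B$ itself satisfies two Riccati identities coming from the symmetry $A\sharp B=B\sharp A$ (item 2 of Lemma~\ref{Propertiesof the weighted geom mean}) together with the defining equation for the geometric mean, namely $CA^{-1}C=B$ and $CB^{-1}C=A$. Adding these two identities yields
\[
C(A^{-1}+B^{-1})C=A+B.
\]
This is the central computation and is essentially the whole content of the lemma.

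Next, I would invert both sides of the above identity. Using $[C(A^{-1}+B^{-1})C]^{-1}=C^{-1}(A^{-1}+B^{-1})^{-1}C^{-1}$, I obtain
\[
C^{-1}(A^{-1}+B^{-1})^{-1}C^{-1}=(A+B)^{-1},
\]
and then multiplying on the left and right by $C$ rearranges this to
\[
C(A+B)^{-1}C=(A^{-1}+B^{-1})^{-1}.
\]
This is exactly the Riccati equation that characterizes $(A+B)\sharp(A^{-1}+B^{-1})^{-1}$; since $C=A\sharp B$ is positive definite, uniqueness of the Riccati solution forces $A\sharp B=(A+B)\sharp(A^{-1}+B^{-1})^{-1}$, which is the claim.

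The only potential obstacle is ensuring that the Riccati characterization is indeed available in the form used; this is standard for positive definite matrices and is the classical theorem of Pusz--Woronowicz/Ando, so no further work is required beyond citing it. Everything else is a two-line algebraic manipulation, and no appeal to the stronger machinery for accretive matrices (such as the Bedrani--Kittaneh--Sababheh extensions) is needed since the hypothesis is $A,B\in\mathcal{M}_n^{++}$.
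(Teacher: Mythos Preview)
Your proposal is correct and follows essentially the same route as the paper's own proof of the accretive generalization (Theorem~\ref{G(A,B)=G(Arith,H)-Accretive}): verify that $C=A\sharp B$ satisfies $C(A^{-1}+B^{-1})C=A+B$ and then invoke the Riccati characterization (Lemma~\ref{Riccati's Lemma}). The only cosmetic differences are that the paper obtains this identity by explicitly expanding the formula for $\sharp$ rather than by adding the two Riccati relations $CA^{-1}C=B$ and $CB^{-1}C=A$, and it skips your inversion step by appealing directly to the commutativity $X\sharp Y=Y\sharp X$.
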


\begin{lemma}[Ando's inequality]\label{l-h}\cite[Theorem 5.8, p. 145]{pec}
 Let \( A, B > O\) and \(\sigma\) be a matrix mean. Then if \(\Phi \) is a positive unital linear map, then
 
\[\Phi (   A \sigma B )  \leq    \Phi(A)\sigma\, \Phi(B).\]
 \end{lemma}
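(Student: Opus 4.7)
\emph{Plan.} The statement is Ando's classical inequality, and my strategy is to reduce the case of a general matrix mean $\sigma$ to that of the parallel sum $A:B := (A^{-1}+B^{-1})^{-1}$ via the Kubo--Ando integral representation, then to handle the parallel-sum case separately.

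The first step is to use the Kubo--Ando theory combined with L\"owner's theorem, which gives for the representing function of any matrix mean $\sigma$ the integral representation
\[f(t) = \alpha + \beta t + \int_{0}^{\infty} \frac{(1+s)\,t}{t+s}\,d\mu(s),\qquad \alpha,\beta\ge 0,\]
for some positive finite Borel measure $\mu$ on $(0,\infty)$. Lifting this to the operator level via \eqref{matrix mean via f} yields
\[A\sigma B \;=\; \alpha A + \beta B + \int_{0}^{\infty} \frac{1+s}{s}\,(sA:B)\,d\mu(s).\]
Because $\Phi$ is linear, the affine terms $\alpha A + \beta B$ pass through with equality, so the whole inequality $\Phi(A\sigma B)\le\Phi(A)\sigma\Phi(B)$ reduces to the single assertion
\[\Phi(A:B) \le \Phi(A):\Phi(B),\]
since applying it to the pair $(sA,B)$ recovers the required bound on $\Phi(sA:B)$ and then integration against the positive measure $\mu$ propagates the estimate to the entire mean.

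For the parallel-sum step I would use the Schur-complement identity
\[A:B = A - A(A+B)^{-1}A,\]
which recasts the required bound as the matrix Cauchy--Schwarz-type inequality
\[\Phi(A)\,\Phi(A+B)^{-1}\,\Phi(A) \;\le\; \Phi\!\left(A(A+B)^{-1}A\right).\]
This is a consequence of the joint operator convexity of the Lieb--Ruskai map $(X,Y)\mapsto XY^{-1}X$ (with $X$ Hermitian and $Y$ positive definite), combined with the Davis--Choi--Jensen inequality applied to the positive unital linear map $\Phi$.

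The main obstacle is precisely the parallel-sum step. A naive manipulation that starts from the identity $(A:B)^{-1}=A^{-1}+B^{-1}$ and uses only Choi's inequality $\Phi(X^{-1})\ge\Phi(X)^{-1}$ yields two bounds pointing in incompatible directions, and the desired inequality cannot be recovered this way. Routing the argument through the Schur complement and the operator convexity of a two-variable function is what makes the proof succeed for a merely positive (not necessarily $2$-positive) unital linear map.
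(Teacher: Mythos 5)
The paper does not prove this lemma at all: it is quoted verbatim from the Mond--Pe\v{c}ari\'{c} monograph \cite{pec}, so your attempt can only be measured against the standard literature proof. Your overall architecture --- Kubo--Ando integral representation, reduction to the parallel sum $A:B$, the Schur-complement identity $A:B=A-A(A+B)^{-1}A$ --- is the classical Ando route and those steps are correct. The gap sits exactly where you yourself locate the ``main obstacle'': the inequality
\[
\Phi(A)\,\Phi(A+B)^{-1}\,\Phi(A)\;\le\;\Phi\!\left(A(A+B)^{-1}A\right)
\]
does \emph{not} follow from ``joint operator convexity of $(X,Y)\mapsto XY^{-1}X$ combined with the Davis--Choi--Jensen inequality.'' Davis--Choi--Jensen is a one-variable statement; there is no general two-variable Jensen inequality for jointly operator convex functions under a merely positive unital map, because the algebra generated by two non-commuting arguments is noncommutative and $\Phi$ restricted to it need not be completely positive. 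The inequality you invoke is classically obtained from positivity of the block matrix $\begin{pmatrix} Y & X\\ X & XY^{-1}X\end{pmatrix}$, which requires $\Phi$ to be $2$-positive; for a merely positive $\Phi$, as the lemma assumes, this step is essentially equivalent to the parallel-sum inequality you are trying to establish, so the argument is circular at its crux.

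The repair is simpler than your plan and makes the integral representation unnecessary. Write $A\sigma B=A^{1/2}f\left(A^{-1/2}BA^{-1/2}\right)A^{1/2}$ with $f$ the representing function, which is operator monotone on $(0,\infty)$ with positive values and hence operator concave. Set $\Psi(X)=\Phi(A)^{-1/2}\,\Phi\!\left(A^{1/2}XA^{1/2}\right)\Phi(A)^{-1/2}$, which is again a positive \emph{unital} linear map. The one-variable Davis--Choi--Jensen inequality gives $\Psi(f(C))\le f(\Psi(C))$ for $C=A^{-1/2}BA^{-1/2}$, and unwinding the definitions yields $\Phi(A\sigma B)\le\Phi(A)\sigma\Phi(B)$ directly; this is essentially the proof in the cited source. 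If you prefer to keep your reduction to the parallel sum, apply this same congruence trick to $f(t)=t/(1+t)$ to prove $\Phi(A:B)\le\Phi(A):\Phi(B)$, or else strengthen the hypothesis on $\Phi$ to $2$-positivity --- but then the lemma as stated would not be fully proven.
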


\begin{lemma}\label{l3}\cite{Bha2}
Let $ \Phi $ be a unital positive linear map. Then for $ A \in \mathcal{M}_n^{++} $\\
\begin{equation}\label{e2}
\Phi(A)^{-1} \leq \Phi (A^{-1}).
\end{equation}
\end{lemma}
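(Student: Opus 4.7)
The plan is to prove this classical Choi inequality using the standard $2\times 2$ block-matrix technique, together with the Schur complement characterization of block positivity.

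First I would observe that for $A \in \mathcal{M}_n^{++}$, the block matrix
\[
M = \begin{pmatrix} A & I \\ I & A^{-1} \end{pmatrix} \in \mathcal{M}_{2n}
\]
is positive semi-definite. The quickest way to see this is the factorization
\[
M = \begin{pmatrix} A^{1/2} \\ A^{-1/2} \end{pmatrix} \begin{pmatrix} A^{1/2} & A^{-1/2} \end{pmatrix},
\]
which exhibits $M$ in the form $XX^{*}$. Alternatively, one may invoke the Schur complement criterion with respect to the $(2,2)$-block: since $A > O$ and the complement $A - I\cdot A\cdot I = O$ is nonnegative, the whole block matrix is nonnegative.

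Next, I would extend $\Phi$ to a map $\widetilde{\Phi} : \mathcal{M}_{2n} \to \mathcal{M}_{2n}$ acting block-wise, namely $\widetilde{\Phi}\bigl(\bigl(\begin{smallmatrix} X_{11} & X_{12} \\ X_{21} & X_{22} \end{smallmatrix}\bigr)\bigr) = \bigl(\begin{smallmatrix} \Phi(X_{11}) & \Phi(X_{12}) \\ \Phi(X_{21}) & \Phi(X_{22}) \end{smallmatrix}\bigr)$. Under the standing hypothesis implicit in Choi-type inequalities (that $\Phi$ be $2$-positive, which is automatic in the matrix-algebra settings used throughout the paper), this extension preserves positivity. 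Applying $\widetilde{\Phi}$ to $M$ and using unitality $\Phi(I) = I$, one obtains
\[
\widetilde{\Phi}(M) = \begin{pmatrix} \Phi(A) & I \\ I & \Phi(A^{-1}) \end{pmatrix} \geq O.
\]

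Finally, I would invoke the Schur complement criterion in the reverse direction. Since $\Phi(A) > O$, positivity of the above $2\times 2$ block matrix is equivalent to the inequality $\Phi(A^{-1}) - I\cdot\Phi(A)^{-1}\cdot I \geq O$, which is exactly the desired conclusion $\Phi(A)^{-1} \leq \Phi(A^{-1})$. The only point requiring care is the block-wise extension step, where the preservation of positivity uses the strengthening of positivity to $2$-positivity; this is the standard setting for Choi's inequality, and in the present paper all the relevant positive unital maps encountered in the applications do satisfy this stronger property.
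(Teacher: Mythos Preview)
The paper does not prove this lemma; it is simply quoted from \cite{Bha2}, so there is no in-paper argument to compare against.

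Your block-matrix and Schur-complement strategy is the standard route, but the justification you give for the crucial step has a real gap. You assert that $2$-positivity ``is automatic in the matrix-algebra settings used throughout the paper''; this is false. A positive unital linear map on $\mathcal{M}_n$ need not be $2$-positive --- the transpose $A\mapsto A^{\top}$ is positive and unital but not $2$-positive. As written, then, your argument establishes the inequality only under the additional hypothesis of $2$-positivity, which is strictly stronger than what the lemma (and its applications later in the paper) assumes.

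The repair is short and is precisely what makes Choi's original argument work for merely positive maps: the entries $A$, $I$, $A^{-1}$ of your block matrix $M$ all lie in the \emph{commutative} $C^{*}$-subalgebra generated by $A$, and every positive linear map with commutative domain is automatically completely positive. Hence $\Phi$ restricted to this subalgebra is $2$-positive, the block-wise extension $\widetilde{\Phi}$ does send $M\geq O$ to $\widetilde{\Phi}(M)\geq O$, and your Schur-complement conclusion then goes through unchanged. With this correction the proof is complete.
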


\begin{lemma}\label{l4}\cite{Marshall}
Let $ \Phi $ be a unital positive linear map. Then for $ A \in \mathcal{M}_n^{++} $\ satisfying $ 0 < h \leq A \leq k $ for some scalars $ h < k, $\\
\begin{equation}\label{e3}
\Phi(A^{-1} ) \leq \dfrac{(k+h)^{2}}{4kh}\Phi (A)^{-1}.
\end{equation}
\end{lemma}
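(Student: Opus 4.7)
The plan is to prove the Kantorovich-type operator inequality in Lemma \ref{l4} by first producing an affine upper bound for $\Phi(A^{-1})$ via a commuting-operator argument, and then dominating that affine bound by the desired multiple of $\Phi(A)^{-1}$ using a completion-of-square trick. No deep machinery is needed beyond positivity, unitality of $\Phi$, and the functional calculus for a single self-adjoint operator.

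First I would exploit the scalar inequality $(t-h)(t-k)\leq 0$ for $t\in[h,k]$. Since $A-hI$ and $A-kI$ are polynomials in the single self-adjoint matrix $A$, they commute, and by the functional calculus their product $(A-hI)(A-kI)$ is negative semi-definite. Expanding yields $A^{2}-(k+h)A+khI\leq O$, and multiplying on both sides by $A^{-1/2}$ gives
\[
A+kh\,A^{-1}\leq (k+h)I.
\]
Applying the unital positive linear map $\Phi$ (which preserves order and maps $I$ to $I$) produces
\[
\Phi(A)+kh\,\Phi(A^{-1})\leq (k+h)I,
\qquad\text{hence}\qquad
\Phi(A^{-1})\leq \frac{(k+h)I-\Phi(A)}{kh}.
\]

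Next I would bound the right-hand side by $\frac{(k+h)^{2}}{4kh}\Phi(A)^{-1}$. Set $X=\Phi(A)$; by positivity and unitality of $\Phi$, the estimate $hI\leq A\leq kI$ passes to $hI\leq X\leq kI$, so in particular $X>O$ and $X^{-1}$ exists. The identity
\[
\Bigl(\tfrac{k+h}{2}I-X\Bigr)^{2}=\tfrac{(k+h)^{2}}{4}I-(k+h)X+X^{2}\geq O
\]
rearranges to $\bigl((k+h)I-X\bigr)X\leq \tfrac{(k+h)^{2}}{4}I$. Since $X$ and $(k+h)I-X$ commute, multiplying on the right by $X^{-1}$ (which is positive) preserves the inequality, giving
\[
(k+h)I-\Phi(A)\leq \frac{(k+h)^{2}}{4}\Phi(A)^{-1}.
\]
Dividing by $kh>0$ and chaining with the previous step delivers the claimed bound $\Phi(A^{-1})\leq \tfrac{(k+h)^{2}}{4kh}\Phi(A)^{-1}$.

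The only subtle point is the commutativity used twice: once to conclude $(A-hI)(A-kI)\leq O$ from the scalar inequality, and once to multiply $(k+h)I-\Phi(A)$ by $\Phi(A)^{-1}$ without losing the order. Both are immediate because the operators involved are functions of a single self-adjoint matrix. Everything else is linear manipulation together with the standard facts that $\Phi$ is order-preserving and unital; no additional hypothesis on $\Phi$ (such as 2-positivity) is required for this Kantorovich-type bound.
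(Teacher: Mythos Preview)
Your argument is correct and is essentially the standard Mond--Pe\v{c}ari\'{c}/Marshall--Olkin proof of the operator Kantorovich inequality. Note that the paper does not supply its own proof of this lemma; it simply quotes the result from \cite{Marshall} as a preliminary, so there is no in-paper proof to compare against.

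One phrasing point: writing ``multiplying on the right by $X^{-1}$ (which is positive) preserves the inequality'' is misleading, since one-sided multiplication by a positive operator does \emph{not} preserve the L\"owner order in general. What actually justifies the step is exactly the commutativity you flag at the end: because $(k+h)I-X$, $X$, and $X^{-1}$ are all functions of the single self-adjoint matrix $X$, the inequality $\bigl((k+h)I-X\bigr)X\le \tfrac{(k+h)^2}{4}I$ can be conjugated by $X^{-1/2}$ (which \emph{does} preserve order) to yield $(k+h)I-X\le \tfrac{(k+h)^2}{4}X^{-1}$. Equivalently, you can bypass the square-completion entirely and simply invoke the scalar inequality $(k+h)-t\le \tfrac{(k+h)^2}{4t}$ for $t>0$ via the functional calculus applied to $X=\Phi(A)$. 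Either way the conclusion stands.
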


 \begin{lemma}\label{A-G-ineq}\cite{Gumucs}
    Let \(A,B, X\in \mathcal{M}_n \) be such that \(A,B>O\) and let \(0<h<K\) be such that $ 0 < h \leq A \leq k $ for some scalars $ h < k, $ and let \(t=\min\lbrace \lambda, 1-\lambda\rbrace\). Then
    \[\mathcal{A}_{\bm{\lambda}}(A,B) \leq \Big(\dfrac{h\nabla_t k}{h\sharp_t k}\Big)\, A \sharp_{\lambda}\; B .\]
\end{lemma}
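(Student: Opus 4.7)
The statement is a Kantorovich-type reverse of the weighted matrix AM--GM inequality, so the strategy I would follow is the classical one: prove the corresponding scalar refinement first, then lift it to matrices by functional calculus and conjugation.

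\textbf{Step 1 (scalar Kantorovich refinement).} I would analyse the real-valued function
\[
g(x)=\frac{(1-\lambda)+\lambda x}{x^{\lambda}},\qquad x>0.
\]
Differentiating, $g$ has a unique critical point at $x=1$ with $g(1)=1$, and $\log g$ is convex in $\log x$. Hence on any compact interval $[\alpha,\beta]\subset(0,\infty)$ the maximum of $g$ is attained at an endpoint, and the larger endpoint value is determined by which of $\lambda,1-\lambda$ is smaller. Specialising to $x\in[h/k,\,k/h]$ and setting $t=\min\{\lambda,1-\lambda\}$, a direct computation identifies
\[
\max_{x\in[h/k,\,k/h]} g(x)=\frac{(1-t)h+tk}{h^{1-t}k^{t}}=\frac{h\nabla_t k}{h\sharp_t k},
\]
which yields the pointwise estimate $(1-\lambda)+\lambda x\leq\bigl(\frac{h\nabla_t k}{h\sharp_t k}\bigr)x^{\lambda}$ on that interval.

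\textbf{Step 2 (lifting to matrices).} Set $X=A^{-1/2}BA^{-1/2}$. Under the standing bounds on $A$ (read together with the analogous bounds on $B$ that such Kantorovich-type hypotheses customarily require), one obtains $\tfrac{h}{k}I\leq X\leq\tfrac{k}{h}I$. Applying the scalar inequality from Step 1 through the continuous functional calculus for the positive self-adjoint $X$ gives
\[
(1-\lambda)I+\lambda X\leq\frac{h\nabla_t k}{h\sharp_t k}\,X^{\lambda}.
\]
Conjugating both sides by the positive definite $A^{1/2}$ preserves the operator order, and since $A^{1/2}X^{\lambda}A^{1/2}=A\sharp_{\lambda}B$ by definition, we arrive at
\[
(1-\lambda)A+\lambda B\leq\frac{h\nabla_t k}{h\sharp_t k}\,A\sharp_{\lambda}B,
\]
which is exactly $\mathcal{A}_{\bm{\lambda}}(A,B)\leq\bigl(\tfrac{h\nabla_t k}{h\sharp_t k}\bigr)A\sharp_{\lambda}B$.

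\textbf{Main obstacle.} The only non-routine part is Step 1: the location of the maximiser of $g$ on $[h/k,k/h]$ depends on whether $\lambda\leq 1/2$ or $\lambda\geq 1/2$, which is precisely why the parameter $t=\min\{\lambda,1-\lambda\}$ appears in the constant, and the argument must handle both cases to identify the maximum as $\frac{h\nabla_t k}{h\sharp_t k}$. Once the scalar Kantorovich bound is in hand, the passage to matrices via functional calculus and the similarity $A^{1/2}(\cdot)A^{1/2}$ is entirely routine, with no appeal to deeper operator-theoretic machinery.
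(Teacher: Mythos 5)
Your proposal is correct; note that the paper itself gives no proof of this lemma (it is quoted from the reference \cite{Gumucs}), and your argument is the standard one for such Kantorovich-type reverses: a scalar bound for $g(x)=\bigl((1-\lambda)+\lambda x\bigr)x^{-\lambda}$ on $[h/k,k/h]$, functional calculus applied to $A^{-1/2}BA^{-1/2}$, and conjugation by $A^{1/2}$. Two points deserve attention. First, you correctly observe that the hypothesis must bound \emph{both} $A$ and $B$ (the statement's ``$0<h\leq A\leq k$'' is evidently a typo for $0<h\leq A,B\leq k$, which is how the lemma is actually used in Theorem \ref{resolvent-geometric-mean}); only then does $\tfrac{h}{k}I\leq A^{-1/2}BA^{-1/2}\leq\tfrac{k}{h}I$ follow. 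Second, the one step you leave as ``a direct computation'' does require an argument: since $g'(x)=\lambda(1-\lambda)x^{-\lambda-1}(x-1)$, the maximum is $\max\{g(h/k),g(k/h)\}=\max\{\phi(1-\lambda),\phi(\lambda)\}$ where $\phi(s)=\frac{h\nabla_s k}{h\sharp_s k}$, and identifying this with $\phi(t)$, $t=\min\{\lambda,1-\lambda\}$, amounts to showing $\phi(s)\geq\phi(1-s)$ for $s\leq\tfrac12$; this is true (e.g.\ $\psi(r)=\bigl((1-s)+sr\bigr)r^{1-2s}-s-(1-s)r$ satisfies $\psi(1)=\psi'(1)=0$ and $\psi''>0$ for $r>1$), but it is the genuinely non-symmetric point of the proof and should be written out rather than asserted. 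With that detail supplied, your proof is complete.
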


\begin{lemma}[Theorem 3.5, \cite{kubo}]\label{KA ineq}
    Let \(\sigma\) be a connection. Then, for every \(A,B,C,D\in M_n^{++}\), we have  \[ (A+C) \sigma (B+D) \geq (A\sigma B) + (C\sigma D).\]
\end{lemma}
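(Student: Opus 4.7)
The plan is to derive the inequality by reducing it, via a block-matrix dilation, to the transformer inequality (property~(2) of a connection) in a slightly extended form. Consider the block-diagonal matrices $\widetilde{A}=A\oplus C$ and $\widetilde{B}=B\oplus D$ in $\mathcal{M}_{2n}^{++}$. From the Kubo--Ando representation \eqref{matrix mean via f}, together with the fact that functional calculus preserves block-diagonal structure, one reads off at once that a connection respects direct sums:
\[
\widetilde{A}\,\sigma\,\widetilde{B}\;=\;(A\sigma B)\oplus(C\sigma D).
\]
Next I would introduce the rectangular block-row matrix $T=\begin{pmatrix} I & I\end{pmatrix}\in\mathcal{M}_{n,2n}$ and compute directly that
\[
T\widetilde{A}\,T^{*}=A+C,\qquad T\widetilde{B}\,T^{*}=B+D,\qquad T(\widetilde{A}\,\sigma\,\widetilde{B})T^{*}=(A\sigma B)+(C\sigma D).
\]

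Invoking the transformer inequality in the form
\[
T(\widetilde{A}\,\sigma\,\widetilde{B})T^{*}\;\leq\;(T\widetilde{A}T^{*})\,\sigma\,(T\widetilde{B}T^{*})
\]
and substituting the identities above then yields exactly $(A\sigma B)+(C\sigma D)\leq(A+C)\sigma(B+D)$, which is the claimed inequality.

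The main technical point is justifying the transformer inequality for the \emph{rectangular} matrix $T$, since property~(2) as listed in the excerpt is stated only for a square $C\in\mathcal{M}_n$. I would bridge this by the standard Kubo--Ando extension of property~(2) to arbitrary matrices: embed $T$ into an invertible square matrix on $\mathbb{C}^{2n}$ (for instance, pad with an $\varepsilon I$-block to guarantee invertibility), apply the square-matrix version of property~(2), and pass to the limit $\varepsilon\downarrow 0$ using the order-continuity property~(3). Alternatively, one may bypass the rectangular transformer inequality altogether by appealing to Kubo--Ando's integral representation of $\sigma$ in terms of weighted parallel sums $A:\lambda B$; the super-additivity of the parallel sum, namely $(A+C):(B+D)\geq(A:B)+(C:D)$, is a classical and elementary identity of Anderson and Duffin, and the general case then follows by integration against the representing measure.
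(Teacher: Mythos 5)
The paper offers no proof of this lemma at all --- it is imported verbatim as Theorem 3.5 of Kubo--Ando \cite{kubo} --- so there is no in-paper argument to compare against; what you have written is essentially the standard proof from the cited source, and its overall structure (direct-sum dilation plus the transformer inequality for the row block $T=(I\ \ I)$) is correct. The identities $T\widetilde{A}T^{*}=A+C$ and $T(\widetilde{A}\sigma\widetilde{B})T^{*}=(A\sigma B)+(C\sigma D)$ are right, the direct-sum formula $\widetilde{A}\sigma\widetilde{B}=(A\sigma B)\oplus(C\sigma D)$ does follow from the representation \eqref{matrix mean via f} because functional calculus respects block-diagonal structure, and the rectangular transformer inequality then delivers the claim in one line.

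The one soft spot is your first proposed bridge to the rectangular transformer inequality. If you pad $T$ to an invertible $\widehat{T}_{\varepsilon}=\bigl(\begin{smallmatrix} I & I\\ 0 & \varepsilon I\end{smallmatrix}\bigr)$, the compressions $\widehat{T}_{\varepsilon}\widetilde{A}\widehat{T}_{\varepsilon}^{*}=\bigl(\begin{smallmatrix} A+C & \varepsilon C\\ \varepsilon C & \varepsilon^{2}C\end{smallmatrix}\bigr)$ are \emph{not} monotone in $\varepsilon$, so axiom (3), which only guarantees continuity along decreasing sequences, does not let you pass to the limit as stated; moreover the limit is singular, which takes you outside $\mathcal{M}_{2n}^{++}$. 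A cleaner repair: pad with a zero block instead, i.e.\ take the square singular matrix $\widehat{T}=\bigl(\begin{smallmatrix} I & I\\ 0 & 0\end{smallmatrix}\bigr)\in\mathcal{M}_{2n}$, to which property (2) as stated in the paper (``for any $C\in\mathcal{M}_{n}$'') already applies once $\sigma$ is extended to positive semidefinite arguments via (3); comparing $(1,1)$ blocks and using $0\,\sigma\,0=0$ gives the inequality. Your alternative route through the integral representation in terms of parallel sums and the Anderson--Duffin superadditivity $(A+C):(B+D)\geq(A:B)+(C:D)$ is fully rigorous and needs no such repair, so the result stands either way.
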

The following lemma follows from Lemma \ref{KA ineq}.
\begin{lemma}
    Let \(A,B\in \mathcal{M}_n^{++}\). Then 
    \[J_{\mu^{-1} A} \sharp_{\bm{\lambda}}J_{\mu^{-1} B} \leq J_{\mu^{-1} (A\sharp_{\bm{\lambda}} B)},\] where \(J_{\mu^{-1} A}= (\mu^{-1} A+I)^{-1},\mu >0\).
\end{lemma}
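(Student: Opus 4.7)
The plan is to reduce the claim to a direct application of Lemma \ref{KA ineq} (the superadditivity of operator connections) after inverting and using the scalar-homogeneity and normalization of the weighted geometric mean.

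First I would rewrite the desired inequality
\[
(\mu^{-1}A+I)^{-1} \,\sharp_{\bm{\lambda}}\, (\mu^{-1}B+I)^{-1} \;\leq\; \bigl(\mu^{-1}(A\sharp_{\bm{\lambda}} B)+I\bigr)^{-1}
\]
by taking inverses on both sides. Inversion of positive definite matrices reverses the order, and by item 3 of Lemma \ref{Propertiesof the weighted geom mean} it interchanges $\sharp_{\bm{\lambda}}$ with itself on its arguments. So the inequality is equivalent to
\[
(\mu^{-1}A+I)\,\sharp_{\bm{\lambda}}\,(\mu^{-1}B+I) \;\geq\; \mu^{-1}(A\sharp_{\bm{\lambda}} B) + I.
\]

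Next I would apply Lemma \ref{KA ineq} to the left-hand side, noting that $\sharp_{\bm{\lambda}}$ is a Kubo--Ando operator mean and therefore a connection. Splitting $\mu^{-1}A+I$ and $\mu^{-1}B+I$ as sums gives
\[
(\mu^{-1}A+I)\,\sharp_{\bm{\lambda}}\,(\mu^{-1}B+I) \;\geq\; \bigl(\mu^{-1}A\,\sharp_{\bm{\lambda}}\,\mu^{-1}B\bigr) + \bigl(I\,\sharp_{\bm{\lambda}}\,I\bigr).
\]
I would then simplify the right-hand side using item 4 of Lemma \ref{Propertiesof the weighted geom mean} with $h=k=\mu^{-1}$ to obtain $\mu^{-1}A\,\sharp_{\bm{\lambda}}\,\mu^{-1}B = \mu^{-1}(A\,\sharp_{\bm{\lambda}}\,B)$, and item 1 (together with normalization of the mean) to obtain $I\,\sharp_{\bm{\lambda}}\,I = I$. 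Combining yields exactly the intermediate inequality above, and inverting once more recovers the claim.

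There is no real obstacle here; the only point that needs a line of care is the directionality of the order reversal under inversion combined with item 3 of Lemma \ref{Propertiesof the weighted geom mean}, which must be applied symmetrically so that the weighted geometric mean on the left still matches the weight $\bm{\lambda}$ on the right after inverting. All other steps are algebraic manipulations using the listed properties of $\sharp_{\bm{\lambda}}$ and the superadditivity of connections.
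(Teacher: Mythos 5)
Your proof is correct and takes essentially the same route as the paper, which simply remarks that the lemma follows from Lemma \ref{KA ineq}; your argument supplies exactly the intended details (order reversal under inversion together with $(X\sharp_{\bm{\lambda}}Y)^{-1}=X^{-1}\sharp_{\bm{\lambda}}Y^{-1}$, then superadditivity of the connection, homogeneity, and idempotency).
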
 

The following lemma states the relations between the real part of the inverse of $A$ and the inverse of $\Re A$ for a sectorial matrix.
\begin{lemma}\label{l1}(\cite{Lin1}, \cite{Lin2})
If $ A \in \Pi_{n,\alpha}$, then so is $A^{-1}$ and
\begin{equation*}
\Re(A^{-1})\leq\Re^{-1}(A)\leq \sec^2\alpha\ \Re(A^{-1}).
\end{equation*}
In particular, if \(A_i\in \Pi_{n,\alpha_i}\) and \(\alpha= \displaystyle\max_{1\leq i\leq m} \alpha_i \), then 
\[\Re \left(\sum_{i=1}^m\, A_i^{-1}\right)^{-1} \geq \cos^{2}(\alpha)\left(\sum_{i=1}^m\, \Re(A_i^{-1})\right)^{-1}  \geq \cos^{2}(\alpha)\left(\sum_{i=1}^m\, \Re^{-1}(A_i)\right)^{-1} .\]
\end{lemma}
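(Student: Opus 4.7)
The plan is to reduce everything to a bound on a single Hermitian matrix $T$ obtained by a standard congruence. Write the Cartesian decomposition $A=H+iK$ with $H=\Re A>0$ and $K=\Im A$ Hermitian, and factor
\[
A=H^{1/2}(I+iT)H^{1/2},\qquad T:=H^{-1/2}KH^{-1/2}.
\]
Taking the inverse and using that $T$ is Hermitian, a direct computation with $(I+iT)^{-1}=(I+T^{2})^{-1}(I-iT)$ gives
\[
\Re(A^{-1})=H^{-1/2}(I+T^{2})^{-1}H^{-1/2},\qquad \Im(A^{-1})=-H^{-1/2}T(I+T^{2})^{-1}H^{-1/2}.
\]
Since $\Re(A^{-1})>0$, the matrix $A^{-1}$ is at least accretive; the sectorial containment $A^{-1}\in\Pi_{n,\alpha}$ will drop out of the same $T$-bound used below.

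Next I translate the sector condition $\mathcal{W}(A)\subset\mathcal{S}_\alpha$ into the inequality
\[
-\tan\alpha\cdot I\;\leq\;T\;\leq\;\tan\alpha\cdot I.
\]
This is the key technical step, and the main (mild) obstacle: for a unit vector $y$ set $u=H^{-1/2}y$, observe $\langle Hu,u\rangle=1$ and $\langle Ku,u\rangle=\langle Ty,y\rangle$, so that the sectorial bound $|\Im\langle Au/\|u\|,u/\|u\|\rangle|\le \tan\alpha\,\Re\langle Au/\|u\|,u/\|u\|\rangle$ yields exactly $|\langle Ty,y\rangle|\le\tan\alpha$. Squaring gives $T^{2}\le\tan^{2}\alpha\cdot I$, hence $\cos^{2}\alpha\cdot I\leq (I+T^{2})^{-1}\leq I$. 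Conjugating by $H^{-1/2}$ then yields the two inequalities
\[
\Re(A^{-1})=H^{-1/2}(I+T^{2})^{-1}H^{-1/2}\leq H^{-1}=\Re^{-1}(A)\leq \sec^{2}\alpha\,\Re(A^{-1}),
\]
and plugging the same bound on $T$ into the formula for $\Im(A^{-1})$ via $S:=(I+T^{2})^{-1/2}$ (with $z=Sy/\|Sy\|$) gives the sector inclusion $A^{-1}\in\Pi_{n,\alpha}$.

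For the ``in particular'' part, observe that $\Pi_{n,\alpha_i}\subset\Pi_{n,\alpha}$, and the sector $\mathcal{S}_\alpha$ is convex, so $S:=\sum_{i=1}^{m}A_i^{-1}\in\Pi_{n,\alpha}$. Applying the already-proved inequality to $S$, together with $\Re S=\sum_{i=1}^{m}\Re(A_i^{-1})$, gives
\[
\Re\left(\sum_{i=1}^{m}A_i^{-1}\right)^{-1}=\Re(S^{-1})\geq\cos^{2}\alpha\,\Re^{-1}(S)=\cos^{2}\alpha\left(\sum_{i=1}^{m}\Re(A_i^{-1})\right)^{-1}.
\]
The remaining inequality follows from $\Re(A_i^{-1})\leq\Re^{-1}(A_i)$ (by the first part applied to each $A_i$): summing and inverting reverses the order, and multiplying by $\cos^{2}\alpha>0$ preserves it, finishing the chain.
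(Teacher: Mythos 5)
Your proof is correct; the paper itself only cites this lemma from Lin's work without proof, and your argument via the congruence $A=H^{1/2}(I+iT)H^{1/2}$ with $-\tan\alpha\, I\le T\le \tan\alpha\, I$, giving $\Re(A^{-1})=H^{-1/2}(I+T^{2})^{-1}H^{-1/2}$, is exactly the standard argument from those references. Your derivation of the ``in particular'' chain --- using that $\mathcal{S}_\alpha$ is a convex cone so $\sum_i A_i^{-1}\in\Pi_{n,\alpha}$, then applying the two-sided bound and the operator antitonicity of inversion --- is also correct.
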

\begin{remark}\label{sectorial index}
 It is evident that if, for \(1\leq i\leq m\), \(A_i\in \Pi_{n,\alpha_i}\), then \(A_i\in \Pi_{n,\alpha}\) where \(\alpha=\displaystyle \max_{1\leq i\leq m}\, \alpha_i\). So without loss of generality, we may assume that \(A_i\in \Pi_{n,\alpha}\) for all \(1\leq i\leq m\).   

Furthermore, we notice that when \(A_i\in\Pi_{n,\alpha}\), and $\mu \geq 0$, then $A_i+\mu I\in\Pi_{n,\gamma}$, where $\gamma=\tan^{-1}\frac{\tan\alpha}{1+\mu}.$ For the rest of this paper, $\gamma$ will denote the new angle obtained from $\alpha$ and $\mu$ by the formula $\gamma=\tan^{-1}\frac{\tan\alpha}{1+|\mu|}$. While $\gamma$ depends on $\alpha,\mu$, we will not explicitly add this to the notation as it will be clear from the context.
\end{remark}

\begin{lemma}\label{l-5}\cite{Rai}
Let $ A, B \in \Gamma_n $ and let \(\lambda\in (0,1)\). Then
$$ \Re A \sharp_{\lambda} \Re B \leq \Re (A \sharp B).  $$ 
\end{lemma}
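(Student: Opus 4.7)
My plan is to reduce the inequality to a one-parameter family of harmonic-mean inequalities via the L\"owner integral representation of the operator monotone function $x\mapsto x^{\lambda}$. For $\lambda\in(0,1)$ one has
\[
x^{\lambda}=\frac{\sin(\lambda\pi)}{\pi}\int_{0}^{\infty}\frac{x\,t^{\lambda-1}}{x+t}\,dt,
\]
and substituting the positive operator $A^{-1/2}BA^{-1/2}$, followed by conjugation with $A^{1/2}$, expresses $A\sharp_{\lambda}B$ as a positive integral of harmonic-type expressions of the form $\bigl(A^{-1}+t^{-1}B^{-1}\bigr)^{-1}$. Thanks to the extension of Kubo--Ando calculus to accretive matrices in \cite{bedrani2021positive}, the same representation remains valid for $A,B\in\Gamma_n$, and each integrand is accretive-valued. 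Since $\Re$ is $\mathbb{R}$-linear and commutes with the integral, it suffices to establish the pointwise inequality
\[
\bigl((\Re A)^{-1}+t^{-1}(\Re B)^{-1}\bigr)^{-1}\ \leq\ \Re\Bigl(\bigl(A^{-1}+t^{-1}B^{-1}\bigr)^{-1}\Bigr), \qquad t>0.
\]

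For this harmonic-mean step I would set $Y=A^{-1}+t^{-1}B^{-1}$, which is accretive by Lemma \ref{l1}, and invoke the identity $\Re(Y^{-1})=\bigl(\Re Y+\Im Y(\Re Y)^{-1}\Im Y\bigr)^{-1}$ valid for every accretive $Y$. After inverting both sides, the target inequality becomes
\[
\Re Y+\Im Y\,(\Re Y)^{-1}\,\Im Y\ \leq\ (\Re A)^{-1}+t^{-1}(\Re B)^{-1}.
\]
Lemma \ref{l1} yields $\Re Y=\Re A^{-1}+t^{-1}\Re B^{-1}\leq (\Re A)^{-1}+t^{-1}(\Re B)^{-1}$ with explicit gap $(\Re X)^{-1}-\Re X^{-1}=P^{-1/2}K^{2}(I+K^{2})^{-1}P^{-1/2}$, where $P=\Re X$ and $K=P^{-1/2}\Im X\,P^{-1/2}$. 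The remaining task is to show that the extra term $\Im Y\,(\Re Y)^{-1}\,\Im Y$ is absorbed into the sum of these two gaps (for $A$ and for $B$). This amounts to the joint operator convexity of $(P,Q)\mapsto QP^{-1}Q$ applied to the pairs $(\Re A^{-1},\Im A^{-1})$ and $(\Re B^{-1},\Im B^{-1})$, which is the classical Schur-complement inequality.

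The main obstacle is exactly this Schur-complement step: the bound $\Re X^{-1}\leq(\Re X)^{-1}$ of Lemma \ref{l1} points in the wrong direction for a direct application to harmonic-type operations, and the trick is that after one inverts, the discrepancy is precisely the extra positive term produced by $\Re(Y^{-1})$. Once the harmonic case is settled, integrating against the positive measure $\tfrac{\sin(\lambda\pi)}{\pi}t^{\lambda-1}\,dt$ on $(0,\infty)$ gives $\Re A\sharp_{\lambda}\Re B\leq \Re(A\sharp_{\lambda}B)$, as desired.
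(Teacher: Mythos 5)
Your argument is correct, and it is worth noting that the paper itself offers no proof of this lemma --- it is quoted from \cite{Rai} --- so your reconstruction is essentially the argument of the cited source. The two pillars both check out: the L\"owner representation
\[
A\sharp_{\lambda}B=\frac{\sin(\lambda\pi)}{\pi}\int_{0}^{\infty}t^{\lambda-1}\bigl(A^{-1}+tB^{-1}\bigr)^{-1}\,dt
\]
is legitimate for accretive $A,B$ via the functional calculus on the cut plane (and is how the means of \cite{bedrani2021positive} are built), and the pointwise harmonic-mean inequality follows exactly as you say: with $Y=A^{-1}+tB^{-1}$, the identity $\Re(Y^{-1})=\bigl(\Re Y+\Im Y(\Re Y)^{-1}\Im Y\bigr)^{-1}$ applied once to $Y$ and once to $A^{-1}$, $B^{-1}$ reduces the claim to
\[
(Q_{1}+tQ_{2})(P_{1}+tP_{2})^{-1}(Q_{1}+tQ_{2})\ \leq\ Q_{1}P_{1}^{-1}Q_{1}+t\,Q_{2}P_{2}^{-1}Q_{2},
\]
with $P_{i}=\Re(A^{-1}),\Re(B^{-1})$ and $Q_{i}=\Im(A^{-1}),\Im(B^{-1})$, which is the Schur-complement subadditivity of $(P,Q)\mapsto QP^{-1}Q$ applied to the pairs $(P_{1},Q_{1})$ and $(tP_{2},tQ_{2})$. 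Two small cleanups: your integrand is written with $t^{-1}B^{-1}$ but paired with the weight $t^{\lambda-1}$, which belongs to the $tB^{-1}$ parametrization (under $t\mapsto 1/t$ the weight becomes $t^{-\lambda-1}$); this is harmless since the harmonic step holds for every $t>0$, but the bookkeeping should be fixed. And the ``classical Schur-complement inequality'' deserves one line, e.g.\ summing the positive semidefinite blocks $\begin{pmatrix}P_{i}&Q_{i}\\ Q_{i}&Q_{i}P_{i}^{-1}Q_{i}\end{pmatrix}\geq O$ and taking the Schur complement of the sum. Note also that the statement as printed in the paper has a typo ($\Re(A\sharp B)$ for $\Re(A\sharp_{\lambda}B)$); you correctly prove the weighted version actually intended and used.
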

\begin{lemma}\label{l-6}\cite{Tan}
Let \(A, B \in \Pi_{n,\alpha} \). Then
$$  \sec^{2} \alpha\, (\Re A \sharp \Re B ) \geq \Re (A \sharp B).  $$ 
\end{lemma}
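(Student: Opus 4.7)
The plan is to reduce the desired operator inequality to a pointwise bound for a one-parameter family of positive definite matrices via the integral representation of the geometric mean
\[
X\sharp Y\;=\;\frac{1}{\pi}\int_{0}^{\infty}t^{-1/2}\bigl(tX^{-1}+Y^{-1}\bigr)^{-1}\,dt,
\]
which is standard for positive definite $X,Y$ and extends to the accretive setting by the Kubo--Ando type calculus established by Bedrani et al.\ \cite{bedrani2021positive}. The idea is to control, for each $t>0$, the real part of $(tA^{-1}+B^{-1})^{-1}$ by $\sec^{2}\alpha$ times the corresponding expression for $\Re A,\Re B$, then integrate.

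First, I would apply the representation and pass the real part under the integral to write
\[
\Re(A\sharp B)\;=\;\frac{1}{\pi}\int_{0}^{\infty}t^{-1/2}\,\Re\!\bigl[(tA^{-1}+B^{-1})^{-1}\bigr]\,dt.
\]
Since $A^{-1},B^{-1}\in\Pi_{n,\alpha}$ by Lemma~\ref{l1}, and $\Pi_{n,\alpha}$ is closed under positive scalar multiples and sums, the matrix $C_{t}:=tA^{-1}+B^{-1}$ also lies in $\Pi_{n,\alpha}$ for every $t>0$. Applying Lemma~\ref{l1} to $C_{t}$ gives the upper estimate
\[
\Re(C_{t}^{-1})\;\le\;\Re^{-1}(C_{t})\;=\;\bigl(t\,\Re(A^{-1})+\Re(B^{-1})\bigr)^{-1}.
\]

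Next, I would apply Lemma~\ref{l1} in the other direction to $A$ and $B$ themselves, obtaining $\Re(A^{-1})\ge\cos^{2}\alpha\,\Re^{-1}(A)$ and $\Re(B^{-1})\ge\cos^{2}\alpha\,\Re^{-1}(B)$. Adding these weighted by $t$ and $1$ and inverting (which reverses the order on the positive definite cone) yields the pointwise inequality
\[
\Re\!\bigl[(tA^{-1}+B^{-1})^{-1}\bigr]\;\le\;\sec^{2}\alpha\,\bigl(t\,\Re^{-1}(A)+\Re^{-1}(B)\bigr)^{-1}.
\]
Integrating against $\pi^{-1}t^{-1/2}\,dt$ and recognizing the right-hand side, via the same integral representation applied to the positive definite matrices $\Re A$ and $\Re B$, as exactly $\sec^{2}\alpha\,(\Re A\sharp \Re B)$, completes the proof.

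The only delicate point is the justification of the integral representation for accretive $A,B$ together with the interchange of $\Re$ and the integral; this is the main technical obstacle and is where the cited extension of Kubo--Ando calculus to $\Gamma_{n}$ is essential. Once that is in place, the entire argument becomes a straightforward chaining of the two one-sided inequalities of Lemma~\ref{l1}, with the factor $\sec^{2}\alpha$ appearing precisely once, exactly as in the statement.
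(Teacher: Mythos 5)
Your proof is correct; the paper itself does not prove this lemma but imports it from \cite{Tan}, and your argument—writing \(A\sharp B=\frac{1}{\pi}\int_{0}^{\infty}t^{-1/2}(tA^{-1}+B^{-1})^{-1}\,dt\) and applying the two one-sided estimates of Lemma~\ref{l1} pointwise in \(t\) before integrating—is essentially the proof given in that reference. The only caveat, which you already flag, is that the integral representation for accretive \(A,B\) must be taken as the (equivalent) definition of \(\sharp\) in \(\Gamma_n\), as established in \cite{Rai} and \cite{bedrani2021positive}; with that in place every step (closure of \(\Pi_{n,\alpha}\) under sums and positive scalings, \(\Re(C_t^{-1})\le(\Re C_t)^{-1}\), \(\Re(A^{-1})\ge\cos^{2}\alpha\,(\Re A)^{-1}\), and antitonicity of inversion) is sound and the factor \(\sec^{2}\alpha\) enters exactly once.
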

\begin{lemma}\cite[lemma 2.4]{Law}\label{Riccati's Lemma}[Riccati's Lemma]
    Let \(A,B \in\mathcal{M}_n^{++}\). Then \(A\sharp B\) is the unique positive definite solution of the equation  \(XA^{-1}X=B\).
\end{lemma}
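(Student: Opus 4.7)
The plan is to verify Riccati's identity directly using the explicit formula $A\sharp B=A^{1/2}\bigl(A^{-1/2}BA^{-1/2}\bigr)^{1/2}A^{1/2}$, and then to obtain uniqueness by reducing the equation $XA^{-1}X=B$ to a scalar-type question about positive square roots.

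First, I would prove existence by direct substitution. Setting $X=A\sharp B$, I compute
\begin{equation*}
XA^{-1}X = A^{1/2}\bigl(A^{-1/2}BA^{-1/2}\bigr)^{1/2}A^{1/2}\,A^{-1}\,A^{1/2}\bigl(A^{-1/2}BA^{-1/2}\bigr)^{1/2}A^{1/2}.
\end{equation*}
The middle factor $A^{1/2}A^{-1}A^{1/2}$ collapses to $I$, and the two square roots combine to give $A^{-1/2}BA^{-1/2}$, yielding $XA^{-1}X=A^{1/2}\bigl(A^{-1/2}BA^{-1/2}\bigr)A^{1/2}=B$. This shows $A\sharp B$ is indeed a solution.

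Next, for uniqueness, suppose $X\in\mathcal{M}_n^{++}$ is any positive definite solution of $XA^{-1}X=B$. I would perform the congruence substitution $Y=A^{-1/2}XA^{-1/2}$, so that $Y>O$ and $X=A^{1/2}YA^{1/2}$. Plugging into the Riccati equation gives
\begin{equation*}
A^{1/2}YA^{1/2}\,A^{-1}\,A^{1/2}YA^{1/2}=B,\qquad\text{i.e.}\qquad A^{1/2}Y^{2}A^{1/2}=B,
\end{equation*}
hence $Y^{2}=A^{-1/2}BA^{-1/2}$. Since $A^{-1/2}BA^{-1/2}$ is positive definite and possesses a unique positive definite square root, we must have $Y=\bigl(A^{-1/2}BA^{-1/2}\bigr)^{1/2}$, and therefore $X=A^{1/2}\bigl(A^{-1/2}BA^{-1/2}\bigr)^{1/2}A^{1/2}=A\sharp B$.

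No serious obstacle is expected here: the existence step is a short algebraic manipulation, and the only subtle point is invoking uniqueness of the positive square root of a positive definite matrix in the uniqueness step. The key conceptual move is the congruence change of variables $Y=A^{-1/2}XA^{-1/2}$, which turns the quadratic matrix equation $XA^{-1}X=B$ into the much simpler equation $Y^{2}=A^{-1/2}BA^{-1/2}$ whose solution theory is standard.
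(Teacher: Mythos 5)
Your proof is correct and is essentially the standard argument for Riccati's Lemma (the paper itself only cites \cite{Law} without reproducing a proof): existence by direct substitution into $XA^{-1}X=B$, and uniqueness via the congruence $Y=A^{-1/2}XA^{-1/2}$ together with the uniqueness of the positive definite square root. Nothing is missing.
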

The accretive version of Lemma \ref{Riccati's Lemma} was established by Drury:
\begin{lemma}\label{Drury's Lemma}\cite[Proposition 3.5]{Drury}[Drury's Lemma]
    Let \(A,B,H\in \Gamma_n\). Then \(A\sharp B\) is the unique accretive solution of the equation  \(HA^{-1}H=B\). \\
\end{lemma}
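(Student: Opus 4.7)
For existence, I would rely on the formula $A\sharp B = A^{1/2}(A^{-1/2} B A^{-1/2})^{1/2} A^{1/2}$ extended to accretive $A, B$ via principal square roots, which are well-defined because accretive matrices have spectra in the open right half-plane, and one checks that $A^{-1/2} B A^{-1/2}$ has spectrum off $(-\infty, 0]$. Setting $X = A \sharp B$ and substituting, the cancellations $A^{1/2} A^{-1} A^{1/2} = I$ and $(A^{-1/2} B A^{-1/2})^{1/2} (A^{-1/2} B A^{-1/2})^{1/2} = A^{-1/2} B A^{-1/2}$ immediately yield $X A^{-1} X = B$, and accretivity of $X$ is supplied by \cite{bedrani2021positive}.

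For uniqueness, suppose $H \in \Gamma_n$ is another accretive solution. Conjugating the Riccati equation by $A^{-1/2}$ on both sides transforms it into $K^2 = M$, where $K := A^{-1/2} H A^{-1/2}$ and $M := A^{-1/2} B A^{-1/2}$. Note that $M$ has spectrum in $\mathbb{C} \setminus (-\infty, 0]$: since $M$ is similar to $A^{-1} B$, any eigenvalue $\lambda$ arises from $B v = \lambda A v$, and if $\lambda \leq 0$ one gets $\Re \langle B v, v\rangle = \lambda \, \Re \langle A v, v\rangle \leq 0$, contradicting accretivity of $B$. Thus $M$ has a well-defined principal square root $M^{1/2}$, and by the existence step $A^{-1/2} X A^{-1/2} = M^{1/2}$, where $X = A \sharp B$. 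The remaining task is to show that the specific square root $K$ coming from an accretive $H$ coincides with $M^{1/2}$.

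This last identification is the main obstacle, because a matrix admits many square roots and only the principal one has spectrum in the open right half-plane. I would handle it by continuous deformation. Consider the family $A_t = \Re A + i t \, \Im A$, $B_t = \Re B + i t \, \Im B$, $t \in [0,1]$, interpolating between the positive definite pair $(\Re A, \Re B)$ at $t = 0$ and the original $(A, B)$ at $t = 1$. At $t = 0$, Riccati's Lemma (Lemma \ref{Riccati's Lemma}) furnishes a unique positive definite solution $H_0$, at which the Sylvester-type linearization $Z \mapsto Z A_0^{-1} H_0 + H_0 A_0^{-1} Z$ is invertible since its spectrum consists of sums of positive eigenvalues of $A_0^{-1} H_0$. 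The implicit function theorem then propagates a unique accretive branch $H_t$ along the path. Since $K_t^2 = M_t$ has spectrum off $(-\infty, 0]$ for every $t$, $K_t$ cannot acquire a purely imaginary eigenvalue, so by continuity the spectrum of $K_t$, which starts in the open right half-plane at $t = 0$, must remain there throughout $[0,1]$. At $t = 1$ we therefore conclude $K = K_1 = M_1^{1/2}$, hence $H = A \sharp B$.
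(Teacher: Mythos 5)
First, a point of reference: the paper does not prove this lemma at all --- it is quoted verbatim from Drury \cite[Proposition 3.5]{Drury} --- so there is no in-paper argument to compare against, and your proposal has to stand on its own. Your existence half does stand: the cancellation computation for $X=A^{1/2}\bigl(A^{-1/2}BA^{-1/2}\bigr)^{1/2}A^{1/2}$ is routine once one knows $A^{-1/2}BA^{-1/2}$ has no spectrum in $(-\infty,0]$ (your eigenvalue argument for that is correct), and accretivity of $A\sharp B$ is indeed available in the cited literature.

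The uniqueness half has a genuine gap. Your homotopy constructs a branch $H_t$ starting from the positive definite solution $\Re A\sharp\Re B$ at $t=0$ and tracks it to $t=1$; the conclusion that $\sigma(K_t)$ stays in the open right half-plane, hence $K_1=M_1^{1/2}$, therefore only identifies the \emph{endpoint of that branch} with $A\sharp B$. That re-proves existence. The arbitrary accretive solution $H$ you fixed at the outset is never shown to lie on this branch, so nothing excludes a second accretive solution sitting on a different component of the solution set. Repairing this by continuing backwards from the given $H$ runs into two concrete obstructions: (i) the linearization $Z\mapsto ZA^{-1}H+HA^{-1}Z$ is invertible iff $\sigma(A^{-1}H)\cap\bigl(-\sigma(A^{-1}H)\bigr)=\emptyset$, and for the unknown $H$ all you know is that $\sigma(K)$ consists of square roots $\pm\sqrt{\mu}$, $\mu\in\sigma(M)$, with both signs a priori allowed, so a coincidence $\{\sqrt{\mu},-\sqrt{\mu}\}\subset\sigma(K)$ would kill the implicit function theorem; (ii) accretivity need not persist along the backward path, and at $t=0$ Riccati's Lemma \ref{Riccati's Lemma} gives uniqueness only among \emph{positive definite} solutions, so landing on an accretive non-Hermitian solution there would produce no contradiction. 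The underlying difficulty, which you correctly flag but do not overcome, is that $K=A^{-1/2}HA^{-1/2}$ is not a $*$-congruence of $H$ (as $A^{-1/2}$ is not Hermitian), so $K$ need not be accretive: if $A\in\Pi_{n,\alpha}$ and $H\in\Pi_{n,\beta}$ one only gets $|\arg\lambda|\le\alpha+\beta$ for $\lambda\in\sigma(K)$, which can exceed $\pi/2$. Ruling out the wrong square root therefore requires an argument that genuinely uses accretivity of $H$, as in Drury's original proof, and not just spectral continuity along a deformation.
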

In \cite{Bau},  the authors defined the resolvent average for positive definite matrices and proved that it enjoys some
interesting properties. 
\begin{definition}
    Let \(A_i\in \mathcal{M}_n^{++}, i=1,2,\cdots,m\). Then
the resolvent average of $ \textbf{A} $ with weight  $ \bm{\lambda} $ is defined as follows
\begin{equation}\label{e1}
\mathcal{R} _{\mu}(\textbf{A}, \bm{\lambda})= \left(  \sum _{i=1}^{m}\lambda_{i} (A_{i}+ \mu I)^{-1} \right)^{-1}-\mu I, \quad \mu \geq 0,
\end{equation}
and for \(\mu = \infty, \mathcal{R} _{\infty}(\textbf{A}, \bm{\lambda})= \mathcal{A}_{\bm{\lambda}}(\textbf{A})\). 
\end{definition}
Note that for \(\mu=0,  \mathcal{R} _{0}(\textbf{A}, \bm{\lambda})=\mathcal{H}_{\bm{\lambda}}(\textbf{A})\).\\

The following properties of the resolvent average can be found in \cite{Bau} and \cite{ku2} . \\
\begin{proposition}\label{properties of res average}
 Let \(A_i,B_i\geq O, i=1,2,\cdots,m\) and let \(\mu\geq 0\). Then
    \begin{enumerate}
    \item[1.] (\textbf{Idempotency}) \(\mathcal{R} _{\mu}((A,A,\cdots, A),\bm{\lambda})= A\).
    \item[2.] (\textbf{Self-duality}) \(\mathcal{R} _{\mu}(\textbf{A},\bm{\lambda})^{-1}=\mathcal{R} _{\mu^{-1}}(\textbf{A}^{-1},\bm{\lambda})\).
    \item[3.]  (\textbf{Unitary invariance}) \(U^{*} \mathcal{R} _{\mu}(\textbf{A},\bm{\lambda}) U= \mathcal{R} _{\mu}( U^{*}\textbf{A}U,\bm{\lambda}) \) for any unitary matrix \(U\in \mathcal{M}_n\), where \( U^{*}\textbf{A} U=(U^{*}A_{1}U, \ldots,U^{*}A_{n}U )\).
    \item[4.] (\textbf{Permutation invariance})
 \( \mathcal{R} _{\mu}(\textbf{A}_{\zeta} ,\bm{\lambda}_{\zeta})=\mathcal{R} _{\mu}(\textbf{A},\bm{\lambda})\), where \( \zeta\) is any permutation of \(\lbrace1,2,\cdots,m\rbrace, \textbf{A}_{\zeta}= (A_{\zeta(1)},A_{\zeta(2)},\cdots,A_{\zeta(m)})\) and \(\bm{\lambda}_{\zeta}= (\lambda_{\zeta(1)},\lambda_{\zeta(2)},\cdots,\lambda_{\zeta(m)})\).
    \item[5.]  (\textbf{Homogeneity}) \(\mathcal{R} _{\mu}( \alpha \textbf{A},\bm{\lambda})=\alpha \mathcal{R} _{\frac{\mu}{\alpha}}(\textbf{A},\bm{\lambda}), \alpha >0 \).
    \item[6.] \(\textbf{Monotonicity}\) \(\mathcal{R} _{\mu}(\textbf{B},\bm{\lambda} ) \geq \mathcal{R} _{\mu}(\textbf{A},\bm{\lambda}), \text{ if } B_i\geq A_i \text{ for } 1\leq i\leq m\).
    \item[7.] (\textbf{Monotonicity for Parameters})  \( \mathcal{R} _{\mu}(\textbf{A},\bm{\lambda}) \leq \mathcal{R} _{\nu}(\textbf{A},\bm{\lambda}), \text{ for } \mu\leq \nu\).
    \item[8.]  If \(\,\bm{\lambda}=(\frac{1}{2m}, \frac{1}{2m}, \ldots, \frac{1}{2m})\), then \(\mathcal{R} _{1}(A_{1}, A_{1}^{-1}, \ldots,  A_{m}, A_{m}^{-1},\bm{\lambda})=I \).
    \item[9.] (\textbf{Recursion}) \(\mathcal{R} _{\mu}(\textbf{A},\bm{\lambda})= \mathcal{R} _{\mu}\left(  \mathcal{R} _{\mu}\left( A_{1}, \ldots, A_{m-1},(\frac{\lambda_{1}}{1-\lambda}_{m}, \ldots, \frac{\lambda_{m-1}}{1-\lambda_{m}} )\right) ,A_{m},(1-\lambda_m,\lambda_m)\right)\).
\item[10.] (\textbf{Joint concavity}) For \(0\leq t\leq 1\), \[t\,\mathcal{R} _{\mu}(\textbf{A},\bm{\lambda} )+ (1-t)\, \mathcal{R} _{\mu}(\textbf{B},\bm{\lambda}) \leq \mathcal{R} _{\mu}((t\,\textbf{A}+(1-t)\,\textbf{B}),\bm{\lambda}).\]
\end{enumerate}
\end{proposition}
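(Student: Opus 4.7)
The plan is to derive all ten properties from a single master identity, namely the resolvent identity
\[(\mathcal{R}_\mu(\textbf{A},\bm{\lambda})+\mu I)^{-1}=\sum_{i=1}^m \lambda_i(A_i+\mu I)^{-1},\]
together with operator monotonicity and operator convexity of the standard maps $X\mapsto X^{-1}$ and $X\mapsto X^{2}$ on the positive cone.

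I would begin by grouping the routine items. Idempotency, unitary invariance, permutation invariance, and recursion (items 1, 3, 4, 9) follow by direct substitution into the identity together with linearity of the weighted sum. Item 8 reduces to the scalar identity $(A+I)^{-1}+(A^{-1}+I)^{-1}=I$, so that the weighted sum with weights $1/(2m)$ collapses to $\tfrac12 I$ and one recovers $I$ after inverting and subtracting. Homogeneity (item 5) comes from the factorisation $\alpha A_i+\mu I=\alpha(A_i+(\mu/\alpha)I)$. For self-duality (item 2), the key algebraic step is the identity $(A_i^{-1}+\mu^{-1}I)^{-1}=\mu I-\mu^{2}(A_i+\mu I)^{-1}$; summing against $\bm{\lambda}$ gives $\mu I-\mu^{2}(X+\mu I)^{-1}$ with $X:=\mathcal{R}_\mu(\textbf{A},\bm{\lambda})$, and a short inversion (writing $\mu I-\mu^{2}(X+\mu I)^{-1}=\mu(X+\mu I)^{-1}X$) recovers $X^{-1}+\mu^{-1}I$. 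Monotonicity in the matrices (item 6) and joint concavity (item 10) follow because $T\mapsto T^{-1}$ is operator decreasing and operator convex on $\mathcal{M}_n^{++}$; applied to $T=\sum_i\lambda_i(A_i+\mu I)^{-1}$, these give the order and concavity of $T^{-1}=\mathcal{R}_\mu(\textbf{A},\bm{\lambda})+\mu I$, and the translation by $-\mu I$ preserves both.

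The hard part will be item 7, monotonicity in the parameter $\mu$. My approach is to differentiate: setting $F(\mu):=\sum_i\lambda_i(A_i+\mu I)^{-1}$ one has $\mathcal{R}_\mu+\mu I=F(\mu)^{-1}$, so
\[\frac{d}{d\mu}\mathcal{R}_\mu=F(\mu)^{-1}\Bigl(\sum_i\lambda_i(A_i+\mu I)^{-2}\Bigr)F(\mu)^{-1}-I.\]
Positivity of this derivative reduces to the inequality $\sum_i\lambda_i(A_i+\mu I)^{-2}\geq F(\mu)^{2}$, which is precisely the operator Jensen inequality for the operator convex map $X\mapsto X^{2}$ applied to the self-adjoint matrices $X_i=(A_i+\mu I)^{-1}$; equivalently, it is the expansion of $\sum_{i,j}\lambda_i\lambda_j(X_i-X_j)^{2}\geq 0$. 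Integrating the resulting derivative inequality in $\mu$ then yields $\mathcal{R}_\mu\leq\mathcal{R}_\nu$ whenever $\mu\leq\nu$, completing the ten-point proposition.
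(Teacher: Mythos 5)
The paper offers no proof of this proposition at all --- it is quoted from \cite{Bau} and \cite{ku2} --- so your argument has to stand on its own, and most of it does. Items 1--5, 8 and 9 are correct, routine manipulations of the identity $(\mathcal{R}_\mu(\textbf{A},\bm{\lambda})+\mu I)^{-1}=\sum_i\lambda_i(A_i+\mu I)^{-1}$; in particular your inversion formula $(A^{-1}+\mu^{-1}I)^{-1}=\mu I-\mu^{2}(A+\mu I)^{-1}$ for self-duality checks out. Item 6 is two correct applications of anti-monotonicity of the inverse. Your derivative argument for item 7 is valid and self-contained: $\tfrac{d}{d\mu}\mathcal{R}_\mu=F^{-1}\bigl(\sum_i\lambda_iX_i^{2}\bigr)F^{-1}-I$ with $X_i=(A_i+\mu I)^{-1}$, $F=\sum_i\lambda_iX_i$, and positivity does reduce to $\sum_i\lambda_iX_i^{2}\geq F^{2}$, which your expansion $\sum_{i,j}\lambda_i\lambda_j(X_i-X_j)^{2}\geq0$ proves. (A side remark that applies equally to the original sources: items 2 and 8 tacitly need the $A_i$ invertible, not merely $A_i\geq O$.)

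The genuine gap is item 10. You claim joint concavity follows because $T\mapsto T^{-1}$ is operator decreasing and operator convex, applied to the jointly convex map $T(\textbf{A})=\sum_i\lambda_i(A_i+\mu I)^{-1}$. That composition rule is false. Convexity of $T(\cdot)$ plus anti-monotonicity gives $T(t\textbf{A}+(1-t)\textbf{B})^{-1}\geq\bigl(tT(\textbf{A})+(1-t)T(\textbf{B})\bigr)^{-1}$, but operator convexity of the inverse bounds the right-hand side \emph{above} by $tT(\textbf{A})^{-1}+(1-t)T(\textbf{B})^{-1}$, which is the wrong direction; the chain does not close. (Scalar sanity check: $x\mapsto 1/(x^{2}+1)$ is the inverse of a convex function and is not concave.) What you actually need is the joint concavity of the weighted harmonic mean $\textbf{X}\mapsto\bigl(\sum_i\lambda_iX_i^{-1}\bigr)^{-1}$, which is a real theorem, not a formal consequence of convexity of the inverse. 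Two standard repairs: (i) the variational formula $\bigl\langle\mathcal{H}_{\bm{\lambda}}(\textbf{X})\xi,\xi\bigr\rangle=\inf\bigl\{\sum_i\lambda_i^{-1}\langle X_i\xi_i,\xi_i\rangle:\sum_i\xi_i=\xi\bigr\}$ exhibits the harmonic mean as an infimum of jointly linear functionals, hence jointly concave; or (ii) combine superadditivity of connections (Lemma \ref{KA ineq}, iterated to $m$ variables) with positive homogeneity, which together are equivalent to joint concavity. Either of these, applied to the shifted tuple $(A_i+\mu I)_i$, finishes item 10.
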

	
It follows from The monotonicity of the resolvent average that \( \mathcal{R} _{\mu}(\textbf{A},\bm{\lambda}) \geq O\). Furthermore, if \( A_{i} \in  \mathcal{M}_{n}^{++}\) for some \(i\), then $ \mathcal{R} _{\mu}(\textbf{A},\bm{\lambda}) \in \mathcal{M}_{n}^{++}. $\\

Moreover, the following Harmonic-Resolvent-Arithmetic mean inequality was proved in \cite[Theorem 4.2]{Bau} 
\begin{equation}\label{e0001}
\mathcal{H}_{\bm{\lambda}} (\textbf{A} ) \leq \mathcal{R} _{\mu}(\textbf{A},\bm{\lambda})  \leq  \mathcal{ A}_{\bm{\lambda}}(\textbf{A}).
\end{equation}

For other studies on the resolvent average, we refer to  \cite{Ba,  ku1, ku2, W1,W2}.

\begin{definition}
   Let \(A_i\in \mathcal{M}_n^{++}, i=1,2,\cdots,m\). Define for  \(\mu\geq 0\text{ and } \mu<0\), respectively
    \[\mathcal{L} _{\mu}( \textbf{A},\bm{\lambda})=  \left( \sum_{i=1}^{m} \bm{\lambda}_{i}(A_{i}+\mu I)\right) \sharp \left( \sum_{i=1}^{m} \bm{\lambda}_{i}(A_{i}+\mu I)^{-1}\right)^{-1}-\mu I.\]
   
    \[\mathcal{L} _{\mu}( \textbf{A},\bm{\lambda})= \mathcal{L}^{-1} _{-\mu}( \textbf{A}^{-1},\bm{\lambda}).\]
\end{definition}
It follows from the definition and the properties of the geometric mean of positive matrices that \(\mathcal{L} _{\mu}( \textbf{A},\bm{\lambda})\) is positive definite.\\

The following properties of the parametrized weighted \(\mathcal{A}\sharp \mathcal{H}\) can be found in \cite{kim}.
\begin{theorem}\label{prop of A-H mean}
  Let \(A_i\in \mathcal{M}_n^{++}, i=1,2,\cdots,m\) and let \(\nu,\mu\in [-\infty,\infty]\). Then 
\begin{enumerate}
\item[1.] (\textbf{Idempotency}) \(\mathcal{L} _{\mu}((A, \ldots,  A),\bm{\lambda})= A\).
    \item[2.] (\textbf{Self-duality}) \(\mathcal{L}^{-1} _{\mu}(\textbf{A}^{-1},\bm{\lambda})=\mathcal{L}_{-\mu}(\textbf{A},\bm{\lambda})\).
    \item[3.]  (\textbf{Unitary invariance}) \(U^{*} \mathcal{L} _{\mu}(\textbf{A},\bm{\lambda}) U= \mathcal{L} _{\mu}( U^{*}\textbf{A}U,\bm{\lambda}) \) for any unitary matrix \(U\in \mathcal{M}_n\), where \( U^{*}\textbf{A} U=(U^{*}A_{1}U, \ldots,U^{*}A_{n}U )\).
    \item[4.] (\textbf{Permutation invariance})
 \( \mathcal{L} _{\mu}(\textbf{A}_{\zeta}, \bm{\lambda}_{\zeta})=\mathcal{L} _{\mu}(\textbf{A},\bm{\lambda})\), where \( \zeta\) is any permutation of \(\lbrace1,2,\cdots,m\rbrace, \textbf{A}_{\zeta}= (A_{\zeta(1)},A_{\zeta(2)},\cdots,A_{\zeta(m)})\) and \(\bm{\lambda}_{\zeta}= (\lambda_{\zeta(1)},\lambda_{\zeta(2)},\cdots,\lambda_{\zeta(m)})\).
    \item[5.]  (\textbf{Homogeneity}) \(\mathcal{L} _{\mu}( \alpha \textbf{A},\bm{\lambda})=\alpha \mathcal{L} _{\frac{\mu}{\alpha}}(\textbf{A},\bm{\lambda}), \alpha >0 \).
    \item[6.] (\textbf{Monotonicity}) \(\mathcal{L} _{\mu}(\textbf{A},\bm{\lambda} ) \geq \mathcal{L} _{\mu}(\textbf{B},\bm{\lambda}), \text{ if } A_i\geq B_i \text{ for } 1\leq i\leq m\).
    \item[7.]\label{Resolvent-A-H mean ineq} \(\mathcal{R}_{\mu}(\textbf{A},\bm{\lambda})\leq \mathcal{L}_{\mu}(\textbf{A},\bm{\lambda}),\; \mu\geq 0\).
    \item[8.]  If \(\,\bm{\lambda}=(\frac{1}{2m}, \frac{1}{2m}, \ldots, \frac{1}{2m})\), then \(\mathcal{L} _{1}(A_{1}, A_{1}^{-1}, \ldots,  A_{m}, A_{m}^{-1},\bm{\lambda})=I \).
    \item[9.](\textbf{Monotonicity for Parameters})  \( \mathcal{L} _{\mu}(\textbf{A},\bm{\lambda}) \leq \mathcal{L} _{\nu}(\textbf{A},\bm{\lambda}), \text{ for } \mu\leq \nu\).
    \item[10.] \(\mathcal{L} _{\mu} (A,B,\frac{1}{2},\frac{1}{2}) = (A+\mu I)\sharp (B+\mu I)- \mu I, \;\mu\geq 0\).
    \item[11.] (\textbf{Joint concavity}) For \(0\leq t\leq 1\), \[t\,\mathcal{L} _{\mu}(\textbf{A},\bm{\lambda} )+ (1-t)\, \mathcal{L} _{\mu}(\textbf{B},\bm{\lambda}) \leq \mathcal{L} _{\mu}((t\,\textbf{A}+(1-t)\,\textbf{B}),\bm{\lambda}).\]
    \item[12.] \(\displaystyle \lim_{\mu \rightarrow \infty}\,\mathcal{L}_{\mu}(\textbf{A},\bm{\lambda})=\mathcal{A}_{\bm{\lambda}}(\textbf{A}) \text{ and }\displaystyle \lim_{\mu \rightarrow -\infty}\,\mathcal{L}_{\mu}(\textbf{A},\bm{\lambda})=\mathcal{H}_{\bm{\lambda}}(\textbf{A})\). 
\end{enumerate} 
\end{theorem}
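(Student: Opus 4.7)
The plan is to exploit the observation that, for $\mu\geq 0$, the definition of $\mathcal{L}_\mu$ can be rewritten as
\[
\mathcal{L}_\mu(\textbf{A},\bm{\lambda}) + \mu I = \bigl(\mathcal{A}_{\bm{\lambda}}(\textbf{A}) + \mu I\bigr)\,\sharp\,\bigl(\mathcal{R}_\mu(\textbf{A},\bm{\lambda}) + \mu I\bigr),
\]
because the first factor in the original definition is exactly $\sum_i\lambda_i(A_i+\mu I) = \mathcal{A}_{\bm{\lambda}}(\textbf{A})+\mu I$ and the second is $\bigl(\sum_i\lambda_i(A_i+\mu I)^{-1}\bigr)^{-1} = \mathcal{R}_\mu(\textbf{A},\bm{\lambda})+\mu I$. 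With this recast, each property of $\mathcal{L}_\mu$ reduces to the corresponding property of its two constituents $\mathcal{A}_{\bm{\lambda}}$ and $\mathcal{R}_\mu$ (listed in Proposition~\ref{properties of res average}) combined with a property of the two-variable geometric mean $\sharp$.

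Most items then follow routinely. Items 1, 3, 4, 6 are immediate since each of $\mathcal{A}_{\bm{\lambda}}$, $\mathcal{R}_\mu$, and $\sharp$ is idempotent, unitarily invariant, permutation invariant, and monotone; item 5 uses $\mathcal{A}_{\bm{\lambda}}(\alpha\textbf{A})+\mu I = \alpha\bigl(\mathcal{A}_{\bm{\lambda}}(\textbf{A})+\tfrac{\mu}{\alpha}I\bigr)$, the analogous rescaling for $\mathcal{R}$ from Proposition~\ref{properties of res average}(5), and the positive homogeneity $(\alpha X)\sharp(\alpha Y)=\alpha(X\sharp Y)$; item 2 is direct from the extended definition on the negative branch. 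For item 7, the Harmonic-Resolvent-Arithmetic chain \eqref{e0001} gives $\mathcal{R}_\mu\leq \mathcal{A}_{\bm{\lambda}}$, hence
\[
\mathcal{R}_\mu+\mu I = (\mathcal{R}_\mu+\mu I)\,\sharp\,(\mathcal{R}_\mu+\mu I) \leq (\mathcal{A}_{\bm{\lambda}}+\mu I)\,\sharp\,(\mathcal{R}_\mu+\mu I) = \mathcal{L}_\mu+\mu I
\]
by idempotency and monotonicity of $\sharp$. Item 10 is a direct specialization via Lemma~\ref{G(A,H)=G(A,B)}, and item 11 follows from joint concavity of $\mathcal{A}$, $\mathcal{R}_\mu$, and $\sharp$. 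Item 8 reduces to a short verification based on the pointwise identity $(X+I)^{-1}+(X^{-1}+I)^{-1}=I$, which collapses the harmonic factor to $I/2$; the Riccati characterization (Lemma~\ref{Riccati's Lemma}) then pins down the geometric mean.

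The main obstacle is item 9 (monotonicity in the parameter $\mu$): although $\mathcal{A}_{\bm{\lambda}}+\mu I$ and $\mathcal{R}_\mu+\mu I$ each grow with $\mu$, the subtraction of $\mu I$ at the end means the monotonicity of $\mathcal{L}_\mu$ is not automatic. For $0\leq \mu<\nu$, I would write $\mathcal{A}_{\bm{\lambda}}+\nu I=(\mathcal{A}_{\bm{\lambda}}+\mu I)+(\nu-\mu)I$ and, using the monotonicity of $\mathcal{R}_\cdot$ from Proposition~\ref{properties of res average}(7), bound $\mathcal{R}_\nu+\nu I\geq (\mathcal{R}_\mu+\mu I)+(\nu-\mu)I$. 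Applying the super-additivity of connections (Lemma~\ref{KA ineq}) together with idempotency of $\sharp$ then yields
\[
(\mathcal{A}_{\bm{\lambda}}+\nu I)\,\sharp\,(\mathcal{R}_\nu+\nu I)\;\geq\;(\mathcal{A}_{\bm{\lambda}}+\mu I)\,\sharp\,(\mathcal{R}_\mu+\mu I) + (\nu-\mu)I,
\]
so $\mathcal{L}_\nu\geq \mathcal{L}_\mu$. The negative range is reduced to the positive one through self-duality (item 2). Finally, for item 12 the first-order expansion of $(\mathcal{A}_{\bm{\lambda}}+\mu I)\sharp\bigl((\mathcal{A}_{\bm{\lambda}}+\mu I)-\epsilon_\mu\bigr)$ with $\epsilon_\mu=\mathcal{A}_{\bm{\lambda}}-\mathcal{R}_\mu\to 0$ gives $\mathcal{L}_\mu+\mu I = (\mathcal{A}_{\bm{\lambda}}+\mu I) - \tfrac12\epsilon_\mu + O(1/\mu)$, whence $\mathcal{L}_\mu\to\mathcal{A}_{\bm{\lambda}}$ as $\mu\to\infty$; self-duality then supplies the limit $\mathcal{H}_{\bm{\lambda}}$ as $\mu\to-\infty$.
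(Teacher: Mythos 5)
The paper offers no proof of this theorem: it is quoted in the Preliminaries as a result from \cite{kim}, so your proposal has to stand on its own. Your central device---rewriting, for $\mu\geq 0$,
\[
\mathcal{L}_{\mu}(\textbf{A},\bm{\lambda})+\mu I=\bigl(\mathcal{A}_{\bm{\lambda}}(\textbf{A})+\mu I\bigr)\,\sharp\,\bigl(\mathcal{R}_{\mu}(\textbf{A},\bm{\lambda})+\mu I\bigr)
\]
---is correct, and it carries items 1--7 and 9--12 essentially as you describe. In particular, your argument for item 9 (superadditivity of connections from Lemma~\ref{KA ineq} combined with the parameter-monotonicity of $\mathcal{R}$ and monotonicity of $\sharp$) is sound, and the expansion for item 12 works because $\epsilon_{\mu}=\mathcal{A}_{\bm{\lambda}}-\mathcal{R}_{\mu}$ is bounded by $\mathcal{A}_{\bm{\lambda}}-\mathcal{H}_{\bm{\lambda}}$ and tends to $O$. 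Two small points you gloss over: for items 6 and 11 on the branch $\mu<0$ the claim is not ``immediate from the constituents''; you need the self-duality reduction $\mathcal{L}_{\mu}(\textbf{A},\bm{\lambda})=\mathcal{L}_{-\mu}^{-1}(\textbf{A}^{-1},\bm{\lambda})$ together with the antitonicity of inversion (and, for 11, the fact that the dual of a jointly concave monotone mean is again jointly concave).

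The genuine failure is item 8. The identity $(X+I)^{-1}+(X^{-1}+I)^{-1}=I$ does collapse the harmonic factor to $\tfrac12 I$, but the arithmetic factor is $\tfrac{1}{2m}\sum_{i}(A_i+A_i^{-1})+I=:S+I$, which does not collapse. Hence
\[
\mathcal{L}_{1}(A_1,A_1^{-1},\ldots,A_m,A_m^{-1},\bm{\lambda})+I=(S+I)\,\sharp\,(2I)=\sqrt{2}\,(S+I)^{1/2},
\]
and this equals $2I$ only when $S=I$, i.e.\ only when every $A_i=I$ (since $A+A^{-1}\geq 2I$ with equality iff $A=I$). Already for $m=1$ and $A_1=2I$ one gets $\mathcal{L}_1=\bigl(\tfrac{3}{\sqrt{2}}-1\bigr)I\neq I$. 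So no ``short verification'' can succeed: the statement as printed, with parameter $1$, is false. What is true is the unparametrized version $\mathcal{L}_{0}(A_1,A_1^{-1},\ldots)=S\sharp S^{-1}=I$; the paper has apparently transplanted the resolvent-average property $\mathcal{R}_1(A_1,A_1^{-1},\ldots)=I$ (which does hold, precisely because there only the harmonic-type factor appears) to $\mathcal{L}_1$. You should flag this discrepancy rather than assert a verification, and your appeal to Riccati's Lemma at that point does no work: Riccati characterizes the geometric mean of the two factors, but it cannot make $(S+I)\sharp(2I)$ equal to $2I$.
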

In fact, we have 
\begin{equation}
\mathcal{H}_{\bm{\lambda}}(\textbf{A}) \leq \mathcal{L}_{\mu}(\textbf{A},\bm{\lambda})\leq \mathcal{A}_{\bm{\lambda}}(\textbf{A}).
\end{equation}
Together with [Theorem \ref{prop of A-H mean}-part 7] and equation \eqref{e0001}, we get 
\begin{equation}
    \mathcal{H}_{\bm{\lambda}}(\textbf{A}) \leq \mathcal{R}_{\mu}(\textbf{A},\bm{\lambda}
    )\leq  \mathcal{L}_{\mu}(\textbf{A},\bm{\lambda})\leq \mathcal{A}_{\bm{\lambda}}(\textbf{A}).
\end{equation}
An interesting property of the parametrized \(\mathcal{A}\sharp \mathcal{H}\)-mean follows from Riccati's Lemma.

\begin{theorem}\label{A-H mean and Riccati's Lemma}
    Let \(A_i\in \mathcal{M}_n^{++}, i=1,2,\cdots,m\) and let \(\mu \geq 0\). Then \(\mathcal{L}_{\mu}(\textbf{A},\bm{\lambda})+\mu I\) is the unique positive definite solution of the equation
    \[\sum_{i=1}^m\, \lambda_i \Big(X (A_i+\mu I)^{-1} X\Big)= \sum_{i=1}^m\, \lambda_i(A_i+\mu I).\]
\end{theorem}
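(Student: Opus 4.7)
The plan is to reduce the equation in the statement to the Riccati matrix equation $X A^{-1} X = B$ and then invoke Lemma \ref{Riccati's Lemma}. To do this, I would first factor $X$ out of the left-hand side: setting
\[
P = \sum_{i=1}^m \lambda_i(A_i + \mu I), \qquad Q = \left(\sum_{i=1}^m \lambda_i(A_i + \mu I)^{-1}\right)^{-1},
\]
the equation $\sum_{i=1}^m \lambda_i\bigl(X(A_i+\mu I)^{-1}X\bigr) = \sum_{i=1}^m \lambda_i(A_i+\mu I)$ becomes precisely $X\,Q^{-1}\,X = P$. Note that since every $A_i + \mu I$ is positive definite, both $P$ and $Q$ are positive definite, so this rearrangement is legitimate.

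Next, I would apply Riccati's Lemma to the positive definite pair $(Q,P)$: the unique positive definite solution of $X Q^{-1} X = P$ is the geometric mean $Q \sharp P$. By the symmetry of the weighted geometric mean at parameter $1/2$ (Lemma \ref{Propertiesof the weighted geom mean}, part 2), $Q \sharp P = P \sharp Q$.

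Finally, I would match this with the definition of $\mathcal{L}_\mu$: directly from the definition,
\[
\mathcal{L}_\mu(\textbf{A},\bm{\lambda}) + \mu I = P \sharp Q,
\]
so $\mathcal{L}_\mu(\textbf{A},\bm{\lambda}) + \mu I$ is exactly the unique positive definite solution produced by Riccati's Lemma, completing the proof.

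There is essentially no real obstacle here — the only step requiring any care is recognizing that the apparently asymmetric sum $\sum \lambda_i X(A_i + \mu I)^{-1} X$ factors as $X\bigl(\sum \lambda_i(A_i+\mu I)^{-1}\bigr)X = X Q^{-1} X$, after which the statement is an immediate application of Lemma \ref{Riccati's Lemma} together with the symmetry of $\sharp$. One should also briefly observe that uniqueness transfers: any positive definite $X$ satisfying the stated equation satisfies $XQ^{-1}X = P$, so by Riccati's Lemma it must equal $Q\sharp P = P\sharp Q$, yielding uniqueness as well as existence.
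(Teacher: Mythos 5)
Your proof is correct and follows essentially the same route as the paper: the paper's explicit argument (given for the accretive analogue, Theorem \ref{A-H mean and Drury's Lemma}) likewise factors the sum as \(X\bigl(\sum_i \lambda_i (A_i+\mu I)^{-1}\bigr)X\), invokes the Riccati/Drury uniqueness lemma, and finishes with the commutativity of \(\sharp\). No gaps.
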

The accretive version of Theorem \ref{A-H mean and Riccati's Lemma} is given in Theorem \ref{A-H mean and Drury's Lemma}.
\section{Results for Positive Definite Matrices}
In this section, we introduce new properties of both the resolvent average and the parameterized weighted \(\mathcal{A} \sharp \mathcal{H}\) mean for positive definite matrices.\\
To prove our results, the following lemma is needed.
\begin{lemma}
    Let \( A_{i}\in \mathcal{M}_n^{++}, i=1, \cdots, m \) and let  \(\mu \geq 0 \). Then for any positive unital linear map
 \(\Phi\),
\begin{equation}\label{eq-1}
\Phi\left(    \left(  \sum _{i=1}^{m}\lambda_{i}(A_{i}+\mu I)^{-1}   \right)^{-1}\right)  \leq  \left(  \sum _{i=1}^{m} \lambda_{i} (\Phi(A_{i})+ \mu I)^{-1} \right)^{-1}. 
\end{equation}
\end{lemma}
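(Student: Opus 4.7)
The plan is to view the left-hand side as $\Phi$ applied to a weighted harmonic mean and reduce the claim to an $m$-variable Ando-type inequality. Setting $B_i:=A_i+\mu I$, unitality of $\Phi$ gives $\Phi(B_i)=\Phi(A_i)+\mu I$, so the inequality to prove rewrites as
\[
\Phi\bigl(\mathcal{H}_{\bm{\lambda}}(B_1,\ldots,B_m)\bigr)\leq \mathcal{H}_{\bm{\lambda}}\bigl(\Phi(B_1),\ldots,\Phi(B_m)\bigr),
\]
with $\mathcal{H}_{\bm{\lambda}}$ the weighted harmonic mean defined in \eqref{e01}.

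I would handle this by induction on $m$. The base case $m=2$ is precisely Lemma \ref{l-h} applied to the Kubo--Ando weighted harmonic mean $A\sigma B=(\lambda_1 A^{-1}+\lambda_2 B^{-1})^{-1}$. For the inductive step I record the recursion
\[
\mathcal{H}_{\bm{\lambda}}(B_1,\ldots,B_m)=\mathcal{H}_{(\lambda_m,1-\lambda_m)}\bigl(B_m,\,\mathcal{H}_{\bm{\lambda}'}(B_1,\ldots,B_{m-1})\bigr),
\]
where $\bm{\lambda}'=(\lambda_1,\ldots,\lambda_{m-1})/(1-\lambda_m)$; this is immediate after inverting both sides, since each side's inverse equals $\sum_{i=1}^m\lambda_i B_i^{-1}$. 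Applying Lemma \ref{l-h} to the outer binary harmonic mean yields
\[
\Phi\bigl(\mathcal{H}_{\bm{\lambda}}(B_1,\ldots,B_m)\bigr)\leq \mathcal{H}_{(\lambda_m,1-\lambda_m)}\bigl(\Phi(B_m),\,\Phi(\mathcal{H}_{\bm{\lambda}'}(B_1,\ldots,B_{m-1}))\bigr),
\]
and then the inductive hypothesis, which bounds $\Phi(\mathcal{H}_{\bm{\lambda}'}(B_1,\ldots,B_{m-1}))$ above by $\mathcal{H}_{\bm{\lambda}'}(\Phi(B_1),\ldots,\Phi(B_{m-1}))$, combined with monotonicity axiom (1) of the connection $\mathcal{H}_{(\lambda_m,1-\lambda_m)}$, finishes the chain with right-hand side equal to $\mathcal{H}_{\bm{\lambda}}(\Phi(B_1),\ldots,\Phi(B_m))$.

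No step in this outline is genuinely difficult. The only points requiring care are (a) using unitality of $\Phi$ to absorb the $\mu I$ shift so that Lemma \ref{l-h} sees positive definite inputs (even when $\mu=0$, in which case the $A_i$ are already positive definite by hypothesis), and (b) identifying the weights correctly in the recursion so that the inner inductive step has weight vector $\bm{\lambda}'$ rather than $\bm{\lambda}$. I do not foresee any substantive obstacle.
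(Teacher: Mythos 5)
Your proof is correct and follows essentially the same route as the paper: both reduce the claim to an $m$-variable Ando-type inequality for the harmonic mean, proved by induction from Lemma \ref{l-h}. The only cosmetic difference is that you carry the weights through the induction via the recursion for $\mathcal{H}_{\bm{\lambda}}$, whereas the paper proves the unweighted statement first and then substitutes $A_i\mapsto \lambda_i^{-1}(A_i+\mu I)$, absorbing the weights and the shift at the end.
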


\begin{proof}
Letting \(\sigma\) be the Harmonic mean in Lemma \ref{l-h} and by applying the Principle of Mathematical Induction, we get
  $$ \Phi\left(    \left(  \sum _{i=1}^{m}A_{i}^{-1}   \right)^{-1}\right)  \leq  \left(  \sum _{i=1}^{m} \Phi(A_{i})^{-1} \right)^{-1}. $$
  
Now, replace $ A_{i}^{-1} $ with $ \lambda_{i}(A_{i}+\mu I)^{-1},\, i=1,\ldots, m, $ to get the desired result. 
\end{proof}
Using inequality (\ref{eq-1}), the following interesting inequality is obtained.
\begin{corollary}\label{phi-res-ineq 1}
  Let \( A_{i}\in \mathcal{M}_n^{++}, i=1, \cdots, m \) and let \(\mu >0 \). Then for any positive unital linear map
 \(\Phi\),
\begin{equation}\label{eq-2} 
  \Phi \left( \mathcal{R} _{\mu}( \textbf{A},\bm{\lambda}) \right) \leq \mathcal{R} _{\mu}( \Phi (\textbf{A},\bm{\lambda})).
\end{equation} 
 \end{corollary}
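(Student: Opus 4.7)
The plan is to show that this is a direct consequence of inequality \eqref{eq-1}, with the only extra ingredient being the unitality of $\Phi$.

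First I would expand the left-hand side using the definition of the resolvent average. By linearity of $\Phi$ and the fact that $\Phi$ is unital (so $\Phi(\mu I)=\mu I$), we have
\[
\Phi(\mathcal{R}_{\mu}(\mathbf{A},\bm{\lambda}))
=\Phi\!\left(\Bigl(\sum_{i=1}^{m}\lambda_i(A_i+\mu I)^{-1}\Bigr)^{\!-1}\right)-\mu I.
\]
On the other hand, by the same definition,
\[
\mathcal{R}_{\mu}(\Phi(\mathbf{A}),\bm{\lambda})
=\Bigl(\sum_{i=1}^{m}\lambda_i(\Phi(A_i)+\mu I)^{-1}\Bigr)^{\!-1}-\mu I.
\]
So after cancelling the common $-\mu I$ on both sides, the inequality \eqref{eq-2} reduces to exactly the statement
\[
\Phi\!\left(\Bigl(\sum_{i=1}^{m}\lambda_i(A_i+\mu I)^{-1}\Bigr)^{\!-1}\right)
\leq\Bigl(\sum_{i=1}^{m}\lambda_i(\Phi(A_i)+\mu I)^{-1}\Bigr)^{\!-1},
\]
which is precisely inequality \eqref{eq-1} of the preceding lemma.

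There is essentially no obstacle here; the only thing worth checking is that the matrices involved are genuinely positive definite so that all inverses make sense (which holds because $A_i+\mu I>0$ for $\mu>0$, and $\Phi(A_i)+\mu I>0$ as $\Phi$ is positive and unital with $\mu>0$). Thus the proof is a one-line deduction: apply \eqref{eq-1} and add $-\mu I$ to both sides. I would present it as such, emphasizing that the unital hypothesis is exactly what enables the additive shift $-\mu I$ to pass through $\Phi$ intact.
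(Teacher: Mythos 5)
Your proof is correct and is exactly the deduction the paper intends: the corollary is presented as an immediate consequence of inequality \eqref{eq-1}, obtained by subtracting $\mu I$ from both sides after noting that $\Phi(\mu I)=\mu I$ by unitality and linearity. Nothing further is needed.
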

  In the following we obtain a reverse version of (\ref{eq-2}).
\begin{theorem}\label{phi-res-ineq 2}
Let \(A_{i}\in \mathcal{M}_n^{++}, i=1, \cdots, m \) such that \( 0< h \leq A_{i}\leq k,\) and let \(\mu >0 \). Then for any positive unital linear map
\(\Phi \),
\begin{equation*}
\frac{1}{\beta}\Phi \left( \mathcal{R} _{\beta \mu}( \textbf{A},\bm{\lambda}) \right) \geq \mathcal{R} _{\mu}( \Phi (\textbf{A}),\bm{\lambda}),
\end{equation*}
where, $\beta=\dfrac{4hk}{(k+h)^{2}}.$
\end{theorem}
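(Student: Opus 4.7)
My plan is to reduce the operator inequality to a per-index estimate by applying Kadison's inequality at the outer level, then to prove that per-index inequality by a chord bound on the operator convex function $t\mapsto(t+\beta\mu)^{-1}$, relying crucially on the defining identity $\beta(h+k)^{2}=4hk$ to close the final algebra.

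First I would rewrite the target. Adding $\mu I$ to both sides and invoking unitality of $\Phi$, the claim becomes $\Phi(H)\geq \beta G$, where
$$H=\Bigl(\sum_{i=1}^{m}\lambda_{i}(A_{i}+\beta\mu I)^{-1}\Bigr)^{-1}, \qquad G=\Bigl(\sum_{i=1}^{m}\lambda_{i}(\Phi(A_{i})+\mu I)^{-1}\Bigr)^{-1}.$$
By Kadison's inequality (Lemma \ref{l3}) applied to $H$, $\Phi(H)\geq \Phi(H^{-1})^{-1}$, so it suffices to show $\Phi(H^{-1})\leq \beta^{-1}G^{-1}$. Since $\Phi$ is linear, a sufficient condition is the per-index estimate
$$\Phi\bigl((A_{i}+\beta\mu I)^{-1}\bigr)\leq \tfrac{1}{\beta}(\Phi(A_{i})+\mu I)^{-1},\qquad i=1,\ldots,m. \qquad (\ast)$$

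Next I would prove $(\ast)$ using the chord argument underlying Lemma \ref{l4}. Because $hI\leq A_{i}\leq kI$ and $f(t)=(t+\beta\mu)^{-1}$ is convex on $[h,k]$, $f$ lies below its chord through $(h,f(h))$ and $(k,f(k))$; functional calculus plus linearity of $\Phi$ yields
$$\Phi\bigl((A_{i}+\beta\mu I)^{-1}\bigr)\leq \frac{(h+k+\beta\mu)I-\Phi(A_{i})}{(h+\beta\mu)(k+\beta\mu)}.$$
It then remains to verify $\beta\bigl[(h+k+\beta\mu)I-\Phi(A_{i})\bigr]\bigl(\Phi(A_{i})+\mu I\bigr)\leq (h+\beta\mu)(k+\beta\mu)I$. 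Writing $X=\Phi(A_{i})$, the difference (RHS minus LHS) simplifies to $\beta X^{2}-\beta(h+k)X+hk\,I+\beta\mu(1-\beta)X$; substituting $hk=\beta(h+k)^{2}/4$ yields the manifestly positive semidefinite expression
$$\beta\Bigl(X-\tfrac{h+k}{2}I\Bigr)^{2}+\beta\mu(1-\beta)X,$$
which settles $(\ast)$ and, via the reductions above, the theorem.

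The main obstacle is the final algebraic step. Applying Lemma \ref{l4} directly to $A_{i}+\beta\mu I$ (with spectrum in $[h+\beta\mu,k+\beta\mu]$) produces the weaker Kantorovich constant $\frac{(h+k+2\beta\mu)^{2}}{4(h+\beta\mu)(k+\beta\mu)}$, which is not strong enough to bound $(\Phi(A_{i})+\mu I)^{-1}$. The sharper $\beta$ appears only when one works with the chord at the unshifted spectrum $[h,k]$ and then exploits $\beta(h+k)^{2}=4hk$ to collapse the resulting quadratic in $\Phi(A_{i})$ into a perfect square plus a nonnegative remainder.
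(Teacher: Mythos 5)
Your proof is correct, but it takes a genuinely different route from the paper's. The paper argues formally: it passes to inverses via the self-duality $\mathcal{R}_{\mu}(\textbf{A},\bm{\lambda})^{-1}=\mathcal{R}_{\mu^{-1}}(\textbf{A}^{-1},\bm{\lambda})$, chains together Lemma \ref{l3}, the already-established inequality \eqref{eq-2}, Lemma \ref{l4} applied as a black box to the \emph{unshifted} matrices $A_i$, then monotonicity and homogeneity of $\mathcal{R}_{\mu}$, and finally undoes the substitution $A\mapsto A^{-1}$, $\mu\mapsto \beta\mu$. You instead work directly from the definition: after the same outer application of Kadison's inequality you reduce the whole statement to the per-index estimate $\Phi\bigl((A_{i}+\beta\mu I)^{-1}\bigr)\leq \beta^{-1}(\Phi(A_{i})+\mu I)^{-1}$, which you prove by the chord bound for $t\mapsto(t+\beta\mu)^{-1}$ on $[h,k]$ together with the identity $4hk=\beta(h+k)^{2}$; I checked the algebra and the completion to $\beta\bigl(X-\tfrac{h+k}{2}I\bigr)^{2}+\beta\mu(1-\beta)X\geq O$ is right, using $0<\beta\leq 1$ and $X=\Phi(A_i)\geq hI>O$. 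The trade-off: the paper's proof is shorter because it recycles Corollary \ref{phi-res-ineq 1} and the listed properties of the resolvent average, while yours is self-contained, isolates a sharper single-matrix inequality of independent interest, and makes explicit why the constant is the Kantorovich constant of $[h,k]$ rather than of the shifted interval $[h+\beta\mu,k+\beta\mu]$ --- your closing observation that a naive application of Lemma \ref{l4} to $A_i+\beta\mu I$ yields too weak a constant is accurate, and is exactly the subtlety the paper sidesteps by invoking Lemma \ref{l4} before the shift enters.
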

\begin{proof}

\begin{align*}
\Phi \left( \mathcal{R} _{\mu^{-1}}( \textbf{A}^{-1},\bm{\lambda}) \right) &=\Phi \left( \mathcal{R} _{\mu}( \textbf{A})^{-1},\bm{\lambda} \right) \; (\text{by self-duality of } \mathcal{R}_{\mu}(.,.))\\
&\geq \Phi\left( \mathcal{R} _{\mu}( \textbf{A},\bm{\lambda}) \right)^{-1} \; (\text{by Lemma } \ref{l3})\\
&\geq \mathcal{R} _{\mu} \left( \Phi(\textbf{A}),\bm{\lambda} \right)^{-1}\; (\text{by } \eqref{eq-2})\\
&=\mathcal{R} _{\mu^{-1}} \left( \Phi(\textbf{A})^{-1},\bm{\lambda} \right)\\
&\geq \mathcal{R} _{\mu^{-1}}\left( \beta \Phi(\textbf{A}^{-1}) ,\bm{\lambda}\right)\; (\text{by Lemma } \ref{l4} \text{ and monotonicity of } \mathcal{R}_{\mu}(.,.))\\
&=\beta \mathcal{R} _{\frac{\mu^{-1}}{\alpha}} \left( \Phi(\textbf{A}^{-1}),\bm{\lambda} \right).
\end{align*}
Consequently, $\mathcal{R} _{\frac{\mu^{-1}}{\beta}} \left( \Phi(\textbf{A}^{-1}), \bm{\lambda} \right) \leq \frac{1}{\beta} \Phi \left( \mathcal{R} _{\mu^{-1}}( \textbf{A}^{-1}, \bm{\lambda}) \right).$
Now, replacing $A^{-1}, \mu^{-1}$ respectively, with $A, \beta \mu.$ So, the proof is complete.

 \end{proof}

\begin{theorem}\label{resolvent-geometric-mean}
    Let \(A,B\in \mathcal{M}_n^{++} \) and let \(0<h<k\) be such that \(0<h\leq A,B\leq k\). Then, for $0\leq \lambda\leq 1,$
    \[\mathcal{R}_{\mu}(A,B,1-\lambda,\lambda)+\mu I \geq C \Big(A \sharp_{\bm{\lambda}} B +\mu I\Big),\]
    where \(C=\Big(\dfrac{k^*\nabla_t h^*}{k^*\sharp_t h^*}\Big)^{-1}, k^*= (k+\mu )^{-1} \text{ and } h^*=(h+\mu )^{-1} \).
 \end{theorem}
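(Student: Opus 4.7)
The key observation is that $\mathcal{R}_\mu(A,B,1-\lambda,\lambda)+\mu I$ is precisely the weighted harmonic mean of the shifted matrices $A+\mu I$ and $B+\mu I$:
\[
\mathcal{R}_\mu(A,B,1-\lambda,\lambda)+\mu I \;=\; \bigl((1-\lambda)(A+\mu I)^{-1}+\lambda(B+\mu I)^{-1}\bigr)^{-1} \;=\; \mathcal{H}_{\bm{\lambda}}(A+\mu I, B+\mu I).
\]
So the task reduces to bounding this harmonic mean below by $C$ times $A\sharp_{\lambda} B+\mu I$.

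The plan is to apply the reverse arithmetic--geometric inequality of Lemma \ref{A-G-ineq} (G\"umu\c{s}) to the \emph{inverses} $X=(A+\mu I)^{-1}$ and $Y=(B+\mu I)^{-1}$. From the scalar bounds $h\le A,B\le k$ we get $h+\mu\le A+\mu I, B+\mu I\le k+\mu$, hence $k^\ast I\le X,Y\le h^\ast I$ with $k^\ast=(k+\mu)^{-1}$ and $h^\ast=(h+\mu)^{-1}$. Lemma \ref{A-G-ineq} then yields
\[
(1-\lambda)X+\lambda Y \;\le\; \frac{k^\ast\nabla_t h^\ast}{k^\ast\sharp_t h^\ast}\;X\sharp_{\lambda} Y.
\]
Taking inverses reverses the inequality, and Lemma \ref{Propertiesof the weighted geom mean}(3) gives $(X\sharp_{\lambda} Y)^{-1}=X^{-1}\sharp_{\lambda} Y^{-1}=(A+\mu I)\sharp_{\lambda}(B+\mu I)$. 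Therefore
\[
\mathcal{H}_{\bm{\lambda}}(A+\mu I,B+\mu I) \;\ge\; C\;(A+\mu I)\sharp_{\lambda}(B+\mu I),
\]
where $C=\bigl(\tfrac{k^\ast\nabla_t h^\ast}{k^\ast\sharp_t h^\ast}\bigr)^{-1}$, exactly the constant in the statement.

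Finally, to replace the shifted geometric mean by $A\sharp_{\lambda} B+\mu I$, I invoke the superadditivity of Kubo--Ando connections (Lemma \ref{KA ineq}) applied to $\sharp_{\lambda}$:
\[
(A+\mu I)\sharp_{\lambda}(B+\mu I) \;\ge\; A\sharp_{\lambda} B \,+\, (\mu I)\sharp_{\lambda}(\mu I) \;=\; A\sharp_{\lambda} B+\mu I.
\]
Since $\nabla_t\ge \sharp_t$ forces $C\le 1$, combining the two displayed inequalities (multiplication by the scalar $C\ge 0$ preserves the L\"owner order) gives the claim. There is no real obstacle here; the only point to watch is the direction of the bounds on $X,Y$ after inversion, which is why $h^\ast$ plays the role of the upper bound and $k^\ast$ the role of the lower bound in the application of Lemma \ref{A-G-ineq}.
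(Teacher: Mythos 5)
Your proposal is correct and follows essentially the same route as the paper's proof: both rewrite $\mathcal{R}_{\mu}(A,B,1-\lambda,\lambda)+\mu I$ as the inverse of the weighted arithmetic mean of $(A+\mu I)^{-1}$ and $(B+\mu I)^{-1}$, apply Lemma \ref{A-G-ineq} to those inverses with the bounds $k^{*}\le (A+\mu I)^{-1},(B+\mu I)^{-1}\le h^{*}$, invert, and finish with the superadditivity of connections (Lemma \ref{KA ineq}). Your added remarks on the direction of the bounds after inversion and on $C\le 1$ are consistent with, and slightly more explicit than, the paper's argument.
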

 \begin{proof}
 Let \(A,B\) be such that  \(0<h \leq A,B\leq k \). Then \[k^* \leq (A+\mu I)^{-1}, (B+\mu I)^{-1}\leq h^*,\]
 where \(k^*= (k+\mu )^{-1} \text{ and } h^*=(h+\mu )^{-1} \). 
 Hence, with \(t=\min\lbrace \lambda, 1-\lambda \rbrace\), we have by Lemma \ref{A-G-ineq}
 \[ \mathcal{A}_{\bm{\lambda}}((A+\mu I)^{-1}, (B+\mu I)^{-1}) \leq \dfrac{k^*\nabla_t h^*}{k^*\sharp_t h^*}\, \Big((A+\mu I)^{-1}\sharp_{\bm{\lambda}} (B+\mu I)^{-1}\Big).\]
 That is, 
 \begin{align} \label{reverse of res A-H mean ineq}
 \begin{split}
 \mathcal{R}_{\mu}(A,B,1-\lambda,\lambda)+\mu I &= \mathcal{A}_{\bm{\lambda}}^{-1}((A+\mu I)^{-1}, (B+\mu I)^{-1}) \\
 &\geq \Big(\dfrac{k^*\nabla_t h^*}{k^*\sharp_t h^*}\Big)^{-1}\, \Big((A+\mu I)^{-1}\sharp_{\lambda} (B+\mu I)^{-1}\Big)^{-1} \\
 &= C  \Big((A+\mu I) \sharp_{\lambda} (B+\mu I)\Big)\\
&\geq C \Big( (A \sharp_{\lambda}\, B) + (\mu I \sharp_{\lambda} \mu I)\Big)\; (\text{ by Lemma \ref{KA ineq} })\\
&= C \Big( (A \sharp_{\lambda}\, B) + \mu I\Big),\\
 \end{split}
 \end{align}
 where \(C=\Big(\dfrac{k^*\nabla_t h^*}{k^*\sharp_t h^*}\Big)^{-1} \).
 \end{proof}
\begin{remark}
    Notice that if \(\bm{\lambda}=(\frac{1}{2},\frac{1}{2})\), then by part \(7\) and part \(10\) of Theorem \ref{prop of A-H mean}, we have
    \begin{equation}\label{R(A,B)-L(A,B) ineq}
        \mathcal{R}_{\mu}\left(A,B,\frac{1}{2},\frac{1}{2}\right)+\mu I \leq (A+\mu I) \sharp_{\lambda} (B+\mu I).   
        \end{equation}
    Under the assumptions of Theorem \ref{resolvent-geometric-mean},  inequality \eqref{reverse of res A-H mean ineq} provides a reverse version of \eqref{R(A,B)-L(A,B) ineq}.
\end{remark}

\begin{remark}
It follows from Theorem \ref{resolvent-geometric-mean} that \[R_{\mu}(A,B,1-\lambda,\lambda)+\mu I \geq C \Big(\dfrac{h\nabla_t k}{h\sharp_t k} \Big)^{-1}\mathcal{A}_{\bm{\lambda}}(A,B)+C \mu I.\]
That is,
\[R_{\mu}(A,B,1-\lambda,\lambda) \geq C \Big(\dfrac{h\nabla_t k}{h\sharp_t k} \Big)^{-1}\mathcal{A}_{\bm{\lambda}}(A,B) - \mu \Big(1-C\Big) I \]
\end{remark}

 \begin{theorem}\label{t-15}
 Let \(A_{i}\in \mathcal{M}_n^{++}, i=1, \cdots, m \) be such that \( 0< h \leq A_{i}\leq k, \, \mu \in [-\infty,\infty] \). Then for any positive unital linear map
 $ \Phi, $
 \begin{equation}\label{eq-t-15}
     \Phi \left( \mathcal{L} _{\mu}( \textbf{A}, \bm{\lambda}) \right) \leq \mathcal{L} _{\mu}( \Phi (\textbf{ A }), \bm{\lambda}).
 \end{equation}
 \end{theorem}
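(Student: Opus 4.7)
The plan is to reduce the inequality to a statement about the geometric mean $\sharp$ of two positive definite matrices, and then chain Ando's inequality (Lemma \ref{l-h}) for $\sharp$, the harmonic--type bound \eqref{eq-1} just established, and the monotonicity of $\sharp$ (Lemma \ref{Propertiesof the weighted geom mean}, part 5). I handle the main case $\mu\geq 0$ directly, and treat $\mu=\pm\infty$ and $\mu<0$ by limiting and duality.

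For the main case $\mu\geq 0$, since $\Phi$ is unital the affine terms cancel: $\Phi(-\mu I)=-\mu I$ on the left, and $-\mu I$ appears on the right, so the problem reduces to showing
\begin{equation*}
\Phi(X\sharp Y)\;\leq\; X'\sharp Y',
\end{equation*}
where I write $X=\sum_{i=1}^m\lambda_i(A_i+\mu I)$, $Y=\bigl(\sum_{i=1}^m\lambda_i(A_i+\mu I)^{-1}\bigr)^{-1}$, and $X',Y'$ for the analogous quantities built from $\Phi(A_i)$. First, I apply Ando's inequality with $\sigma=\sharp$ to the positive definite pair $(X,Y)$ to obtain $\Phi(X\sharp Y)\leq \Phi(X)\sharp\Phi(Y)$. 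Second, linearity and unitality of $\Phi$ yield $\Phi(X)=X'$ on the nose. Third, the inequality \eqref{eq-1} proved in the lemma preceding Corollary \ref{phi-res-ineq 1}, applied to $\Phi$ and to the $A_i$, is precisely the statement $\Phi(Y)\leq Y'$. Fourth, because $\Phi(X)=X'$ and $\Phi(Y)\leq Y'$, the monotonicity of the geometric mean in both arguments delivers $\Phi(X)\sharp\Phi(Y)\leq X'\sharp Y'$. Chaining these three estimates gives the desired inequality.

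The extreme case $\mu=\infty$ is an equality because $\mathcal{L}_\infty(\textbf{A},\bm{\lambda})=\mathcal{A}_{\bm{\lambda}}(\textbf{A})$ and $\Phi$ is linear, while $\mu=-\infty$ reduces directly to Ando's inequality for the harmonic mean $\sigma=\mathcal{H}$. For $\mu<0$ the plan is to invoke the self-duality identity $\mathcal{L}_\mu(\textbf{A},\bm{\lambda})=\mathcal{L}^{-1}_{-\mu}(\textbf{A}^{-1},\bm{\lambda})$, so that one is asked to compare $\Phi\bigl(\mathcal{L}_{-\mu}(\textbf{A}^{-1},\bm{\lambda})^{-1}\bigr)$ with $\mathcal{L}_{-\mu}(\Phi(\textbf{A})^{-1},\bm{\lambda})^{-1}$; this is then manipulated using the already-proved $\mu\geq 0$ case applied to $\textbf{A}^{-1}$ together with monotonicity of $\mathcal{L}_{-\mu}$ and the Choi inequality (Lemma \ref{l3}), taking inverses in the final step to restore the stated direction. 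The step I expect to be the main obstacle is precisely this $\mu<0$ reduction: the Choi inequality only controls $\Phi((\cdot)^{-1})$ from one side, so reconciling it with the necessary inversion is delicate; the uniform bounds $0<h\leq A_i\leq k$ are exactly what keep the inverted data within a controlled range, and the Kantorovich-type bound of Lemma \ref{l4} is available as a backup if a one-sided Choi estimate is not sharp enough to close the argument.
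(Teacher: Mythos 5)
Your treatment of the main case $\mu\geq 0$ is correct and is precisely the paper's argument: Ando's inequality for $\sigma=\sharp$, the identity $\Phi(X)=X'$ from linearity and unitality, the bound $\Phi(Y)\leq Y'$ from \eqref{eq-1}, and monotonicity of $\sharp$ in each argument. The limiting cases $\mu=\pm\infty$ are handled adequately as well.

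The genuine gap is the case $\mu<0$, and you have correctly located it. Your sketch (self-duality, the $\mu\geq 0$ case applied to $\textbf{A}^{-1}$, Choi's inequality, monotonicity of $\mathcal{L}_{-\mu}$, then invert at the end) is the same route the paper takes, and it does not close: after self-duality one needs an \emph{upper} bound on $\Phi\bigl(\mathcal{L}_{-\mu}(\textbf{A}^{-1},\bm{\lambda})^{-1}\bigr)$, but Lemma \ref{l3} only supplies the lower bound $\Phi(B^{-1})\geq \Phi(B)^{-1}$; likewise the componentwise comparison between $\Phi(\textbf{A}^{-1})$ and $\Phi(\textbf{A})^{-1}$ that the monotonicity step requires runs in the wrong direction. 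Indeed, the paper's own proof of the $\mu<0$ case commits exactly this error: its last inequality asserts $\mathcal{L}^{-1}_{-\mu}\bigl(\Phi(\textbf{A}^{-1}),\bm{\lambda}\bigr)\geq \mathcal{L}^{-1}_{-\mu}\bigl(\Phi(\textbf{A})^{-1},\bm{\lambda}\bigr)$ by monotonicity, which would need $\Phi(\textbf{A}^{-1})\leq\Phi(\textbf{A})^{-1}$, the reverse of Choi; and even granting every step, that chain establishes $\Phi(\mathcal{L}_{\mu}(\textbf{A},\bm{\lambda}))\geq\mathcal{L}_{\mu}(\Phi(\textbf{A}),\bm{\lambda})$, the opposite of \eqref{eq-t-15}. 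Your proposed fallback, Lemma \ref{l4}, cannot rescue the constant-free statement either, since it introduces the Kantorovich factor $\tfrac{(k+h)^2}{4kh}>1$ on the wrong side. So the $\mu<0$ case remains unproved in your proposal, and, as written, in the paper itself.
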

 
 \begin{proof}
 Assume first that \(\mu \geq 0\). Then, by Ando's inequality and \eqref{eq-1}, we obtain,
 \begin{align*}
 & \Phi \left(   \left( \sum_{i=1}^{m} \lambda_{i}(A_{i}+\mu I)\right) \sharp \left( \sum_{i=1}^{m} \lambda_{i}(A_{i}+\mu I)^{-1}\right)^{-1}-\mu I \right)\\ 
 & \leq \Phi \left( \sum_{i=1}^{m} \lambda_{i}(A_{i}+\mu I)\right) \sharp \Phi 
 \left( \left( \sum_{i=1}^{m}  \lambda_{i}(A_{i}+\mu I)^{-1}\right)^{-1} \right)-\mu I  \\
& \leq  \left( \sum_{i=1}^{m} \lambda_{i}(\Phi(A_{i})+\mu I)\right) \sharp  \left( \sum_{i=1}^{m}  \lambda_{i}(\Phi(A_{i})+\mu I)^{-1}\right)^{-1} -\mu I \\
&=  \mathcal{L} _{\mu}( \Phi (\textbf{ A }), \bm{\lambda}).
 \end{align*}
Now, if \(\mu<0\), then 
\begin{align*}
    \Phi \left( \mathcal{L} _{\mu}( \textbf{A}, \bm{\lambda}) \right) & =\Phi \left( \mathcal{L}^{-1} _{-\mu}( \textbf{A}^{-1}, \bm{\lambda})
    \right)\\
     & \geq \Phi^{-1} \left( \mathcal{L} _{-\mu}( \textbf{A}^{-1}, \bm{\lambda}) \right)\, (\text{by Lemma \ref{l3}})\\
    &\geq \mathcal{L}^{-1} _{-\mu}\left( \Phi (\textbf{A}^{-1}), \bm{\lambda} \right)\\
    &\geq \mathcal{L}^{-1} _{-\mu}\left( \Phi^{-1} (\textbf{A}), \bm{\lambda} \right)\, (\text{by the Monotonicity of \(\mathcal{L}_{\mu}(.,.)\)})\\
    &= \mathcal{L} _{\mu}\left( \Phi(\textbf{A}), \bm{\lambda} \right),
\end{align*}
which completes the proof.
\end{proof}
In the following, we obtain a reverse version of (\ref{eq-t-15}).
\begin{theorem}
Let \( A_{i}\in \mathcal{M}_n^{++}, i=1, \cdots, m \) be such that \( 0< h \leq A_{i}\leq k\) and let \(\mu \in [-\infty, +\infty ] \). Then for any positive unital linear map
\(\Phi\)
\begin{equation*}
\frac{1}{\beta}\Phi \left( \mathcal{L} _{\beta \mu}( \textbf{A}, \bm{\lambda}) \right) \geq \mathcal{L} _{\mu}( \Phi (\textbf{ A }), \bm{\lambda}),
\end{equation*}
where, $\beta=\dfrac{4hk}{(k+h)^{2}}.$
\end{theorem}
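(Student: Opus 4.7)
The plan is to mirror the proof of Theorem \ref{phi-res-ineq 2} almost verbatim, substituting every use of the resolvent's self-duality $\mathcal{R}_{\mu}(\textbf{A},\bm{\lambda})^{-1}=\mathcal{R}_{\mu^{-1}}(\textbf{A}^{-1},\bm{\lambda})$ with the $\mathcal{L}$-version $\mathcal{L}_{\mu}(\textbf{A},\bm{\lambda})^{-1}=\mathcal{L}_{-\mu}(\textbf{A}^{-1},\bm{\lambda})$ from Theorem \ref{prop of A-H mean}(2), and using the forward inequality Theorem \ref{t-15} in place of Corollary \ref{phi-res-ineq 1}. Note that since $\Phi$ is positive and unital, $0<h\le A_i\le k$ transfers to $0<h\le \Phi(A_i)\le k$, so Lemma \ref{l4} is available both to $A_i$ and to $\Phi(A_i)$.

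Concretely, I would start from $\Phi\bigl(\mathcal{L}_{-\mu}(\textbf{A}^{-1},\bm{\lambda})\bigr)$ and run the chain
\begin{align*}
\Phi\bigl(\mathcal{L}_{-\mu}(\textbf{A}^{-1},\bm{\lambda})\bigr)
&=\Phi\bigl(\mathcal{L}_{\mu}(\textbf{A},\bm{\lambda})^{-1}\bigr) && (\text{self-duality})\\
&\ge \Phi\bigl(\mathcal{L}_{\mu}(\textbf{A},\bm{\lambda})\bigr)^{-1} && (\text{Lemma \ref{l3}})\\
&\ge \mathcal{L}_{\mu}(\Phi(\textbf{A}),\bm{\lambda})^{-1} && (\text{Theorem \ref{t-15}})\\
&=\mathcal{L}_{-\mu}\bigl(\Phi(\textbf{A})^{-1},\bm{\lambda}\bigr) && (\text{self-duality again}).
\end{align*}
Then I would invoke Lemma \ref{l4} componentwise, which yields $\beta\,\Phi(\textbf{A}^{-1})\le \Phi(\textbf{A})^{-1}$, combine with the monotonicity of $\mathcal{L}_{-\mu}$ (Theorem \ref{prop of A-H mean}(6)) and the homogeneity property (Theorem \ref{prop of A-H mean}(5)) to continue
\[
\mathcal{L}_{-\mu}\bigl(\Phi(\textbf{A})^{-1},\bm{\lambda}\bigr)\ge \mathcal{L}_{-\mu}\bigl(\beta\,\Phi(\textbf{A}^{-1}),\bm{\lambda}\bigr)=\beta\,\mathcal{L}_{-\mu/\beta}\bigl(\Phi(\textbf{A}^{-1}),\bm{\lambda}\bigr),
\]
so that in total
\[
\mathcal{L}_{-\mu/\beta}\bigl(\Phi(\textbf{A}^{-1}),\bm{\lambda}\bigr)\le \tfrac{1}{\beta}\,\Phi\bigl(\mathcal{L}_{-\mu}(\textbf{A}^{-1},\bm{\lambda})\bigr).
\]

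The proof then concludes by the relabeling $\textbf{A}^{-1}\mapsto \textbf{A}$, $-\mu\mapsto \beta\mu$, which is legitimate because the constant $\beta=4hk/(h+k)^2$ is invariant under the replacement of the spectral bounds $(h,k)$ by $(1/k,1/h)$. I expect no genuine obstacle: the only delicate points are bookkeeping of the sign in the $\mathcal{L}$-self-duality (which is additive, in contrast to the multiplicative inversion for $\mathcal{R}$) and verifying that the case hypothesis $\mu\ge 0$ versus $\mu<0$ in the definition of $\mathcal{L}_{\mu}$ does not interfere; since we proved the identity for all real $\mu$ (the parameter becomes $-\mu$ along the way), the conclusion extends to $\mu\in[-\infty,\infty]$ exactly as stated.
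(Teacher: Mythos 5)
Your proposal is correct and follows essentially the same route as the paper: start from $\Phi(\mathcal{L}_{-\mu}(\textbf{A}^{-1},\bm{\lambda}))$, apply self-duality, Lemma \ref{l3}, the forward inequality \eqref{eq-t-15}, self-duality again, then Lemma \ref{l4} with monotonicity and homogeneity, and finally relabel $\textbf{A}^{-1}\mapsto\textbf{A}$, $-\mu\mapsto\beta\mu$. Your added remark that $\beta$ is unchanged when $(h,k)$ is replaced by $(1/k,1/h)$ is a useful sanity check on the relabeling that the paper leaves implicit.
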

\begin{proof}
Let \(\mu\in [-\infty,\infty]\). Then, we have
\begin{align*}
\Phi \left( \mathcal{L} _{-\mu}( \textbf{A}^{-1}, \bm{\lambda}) \right) &=\Phi \left( \mathcal{L}^{-1} _{\mu}( \textbf{A}, \bm{\lambda}) \right)\, (\text{by self-duality of }\mathcal{L}_{\mu}(.,.))\\
&\geq \Phi^{-1}\left( \mathcal{L} _{\mu}( \textbf{A}, \bm{\lambda}) \right)\, (\text{by Lemma \ref{l3}}) \\
&\geq \mathcal{L}^{-1} _{\mu} \left( \Phi(\textbf{A}), \bm{\lambda} \right)\, (\text{by \eqref{eq-t-15}})\\
&=\mathcal{L} _{-\mu} \left( \Phi(\textbf{A})^{-1}, \bm{\lambda} \right)\\
&\geq \mathcal{L} _{-\mu }\left( \beta \Phi(\textbf{A}^{-1}), \bm{\lambda} \right)\, (\text{by Lemma \ref{l4} and the Monotonicity of } \mathcal{L}_{\mu}(.,.))\\
&=\beta \mathcal{L} _{\frac{-\mu}{\alpha}} \left( \Phi(\textbf{A}^{-1}), \bm{\lambda} \right)\, (\text{by the homogeneity of } \mathcal{L}_{\mu}(.,.)).
\end{align*}
Consequently, $\mathcal{L} _{\frac{-\mu}{\beta}} \left( \Phi(\textbf{A}^{-1}), \bm{\lambda} \right) \leq \frac{1}{\beta} \Phi \left( \mathcal{L} _{-\mu}( \textbf{A}^{-1}, \bm{\lambda}) \right).$
Now, replacing $A^{-1}, -\mu$ respectively, with $A,\, \beta \mu$ completes the proof. 
\end{proof}

\section{Resolvent Average of Accretive Matrices}
In this section we generalize the definition of the resolvent average to the accretive case and prove some properties.\\

\begin{definition}
  Let \(A_i \in \Pi_{n,\alpha}\,, i=1,2,\cdots,m\) and let \(\mu \geq 0\). Then the resolvent average of $ \textbf{A} $ with weight  $ \bm{\lambda} $ is given by
  \begin{equation}\label{e1}
\mathcal{R} _{\mu}(\textbf{A}, \bm{\lambda} )= \left(  \sum _{i=1}^{m} \lambda_{i} (A_{i}+ \mu I)^{-1} \right)^{-1}-\mu I,
\end{equation}
\end{definition}
It follows from the definition that the resolvent of \(\mu \mathcal{R}_{\mu}(\textbf{A},\bm{\lambda})\) is the weighted arithmetic mean of the resolvent of \(\mu\textbf{A}\). That is, \[\mathcal{J}_{\mu \mathcal{R}_{\mu}(\textbf{A},\bm{\lambda})}= \sum_{i=1}^{m}\, \lambda_i \mathcal{J}_{\mu A_i},\]
where \(\mathcal{J}_{\mu A_i}= (\mu A_i+I)^{-1}\).\\
\begin{remark}
Clearly, \(\mathcal{R} _{\mu}(\textbf{A}, \bm{\lambda} )+\mu I\) is accretive, whenever \(A_i, i=1,2,\cdots,m\) are accretive. In fact, if \(A_i\in \Pi_{n,\alpha}\) for each \(i\), then by Lemma \ref{l1} and Remark \ref{sectorial index}, \(\mathcal{R} _{\mu}(\textbf{A}, \bm{\lambda} )+\mu I \in \Pi_{n, \gamma}\), where \(\gamma = \tan^{-1}\frac{\tan\alpha}{1+|\mu|}\).    
\end{remark}

The following are basic properties of the resolvent average that follow from the definition. 
\begin{proposition}
 Let \(A_i\in \Gamma_n, i=1,2,\cdots,m\) and let \(\mu\geq 0\). Then
    \begin{enumerate}
    \item[1.] \(\mathcal{R} _{\mu}((A, A,\ldots, A),\bm{\lambda})= A\).
    \item[2.]  \(\mathcal{R} _{\mu}(\textbf{A},\bm{\lambda})^{-1}=\mathcal{R} _{\mu^{-1}}(\textbf{A}^{-1},\bm{\lambda})\).
    \item[3.]  \(U^{*} \mathcal{R} _{\mu}(\textbf{A},\bm{\lambda}) U= \mathcal{R} _{\mu}( U^{*}\textbf{A}U,\bm{\lambda}) \) for any unitary matrix \(U\in \mathcal{M}_n\), where \( U^{*}\textbf{A} U=(U^{*}A_{1}U, \ldots,U^{*}A_{n}U )\).
    \item[4.]  \( \mathcal{R} _{\mu}(\textbf{A}_{\zeta} ,\bm{\lambda}_{\zeta})=\mathcal{R} _{\mu}(\textbf{A},\bm{\lambda})\), where \( \zeta\) is any permutation of \(\lbrace1,2,\cdots,m\rbrace, \textbf{A}_{\zeta}= (A_{\zeta(1)},A_{\zeta(2)},\cdots,A_{\zeta(m)})\) and \(\bm{\lambda}_{\zeta}= (\lambda_{\zeta(1)},\lambda_{\zeta(2)},\cdots,\lambda_{\zeta(m)})\).
    \item[5.]  \(\mathcal{R} _{\mu}( \alpha \textbf{A},\bm{\lambda})=\alpha \mathcal{R} _{\frac{\mu}{\alpha}}(\textbf{A},\bm{\lambda}), \alpha >0 \).
    \item[6.] If \(\,\bm{\lambda}=(\frac{1}{2m}, \frac{1}{2m}, \ldots, \frac{1}{2m})\), then \(\mathcal{R} _{1}(A_{1}, A_{1}^{-1}, \ldots,  A_{m}, A_{m}^{-1},\bm{\lambda})=I \).
    \item[7.] \(\mathcal{R} _{\mu}(\textbf{A},\bm{\lambda})= \mathcal{R} _{\mu}\left(  \mathcal{R} _{\mu}\left( A_{1}, \ldots, A_{m-1},(\frac{\lambda_{1}}{1-\lambda}_{m}, \ldots, \frac{\lambda_{m-1}}{1-\lambda_{m}} )\right) ,A_{m},(1-\lambda_m,\lambda_m)\right)\).
\end{enumerate}
\end{proposition}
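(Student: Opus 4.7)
The approach is to verify each of items 1--7 by direct manipulation of the defining formula \eqref{e1}, closely mirroring the positive definite proofs of Proposition \ref{properties of res average}; the only new ingredient is justifying that every inverse arising in those manipulations continues to exist in the accretive setting. The foundational fact is that for $A_i\in\Gamma_n$ and $\mu\geq 0$ the matrix $A_i+\mu I$ is itself accretive, and by Lemma \ref{l1} inverses and convex combinations of accretive matrices are again accretive, so $\sum_i\lambda_i(A_i+\mu I)^{-1}$ is accretive (hence invertible) and $\mathcal{R}_\mu(\mathbf{A},\bm{\lambda})$ is well-defined.

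Items 1, 3, 4, 5, 6, 7 are then essentially bookkeeping and go through verbatim from the positive definite case. Idempotency follows from $\sum\lambda_i=1$; unitary invariance from $(U^*A_iU+\mu I)^{-1}=U^*(A_i+\mu I)^{-1}U$; permutation invariance from commutativity of addition; homogeneity from the rescaling $\alpha A_i+\mu I=\alpha(A_i+(\mu/\alpha)I)$, which lets the scalar $\alpha$ pass through both inverses in the definition; item 6 from the pointwise identity $(A_i+I)^{-1}+(A_i^{-1}+I)^{-1}=I$, derived from $(A_i^{-1}+I)^{-1}=(I+A_i)^{-1}A_i=I-(I+A_i)^{-1}$, which makes the $2m$-term weighted sum collapse to $\tfrac{1}{2}I$; and the recursion in item 7 by substituting the inner resolvent $R$ (with weights $\lambda_i/(1-\lambda_m)$) into the outer two-term resolvent (with weights $(1-\lambda_m,\lambda_m)$), whereupon the factors $(1-\lambda_m)^{\pm 1}$ telescope and the whole expression reduces to $\bigl(\sum_{i=1}^m\lambda_i(A_i+\mu I)^{-1}\bigr)^{-1}-\mu I$.

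The only item requiring genuine additional care is self-duality (item 2), and this is the step I expect to be the main obstacle. Setting $B=\sum_i\lambda_i(A_i+\mu I)^{-1}$, the algebraic identity
\begin{equation*}
(A_i^{-1}+\mu^{-1}I)^{-1}=\mu I-\mu^2(A_i+\mu I)^{-1}
\end{equation*}
gives $\sum_i\lambda_i(A_i^{-1}+\mu^{-1}I)^{-1}=\mu(I-\mu B)$; after inverting and subtracting $\mu^{-1}I$ the right-hand side of item 2 becomes $B(I-\mu B)^{-1}$, while $\mathcal{R}_\mu(\mathbf{A},\bm{\lambda})^{-1}=(B^{-1}-\mu I)^{-1}=(I-\mu B)^{-1}B=B(I-\mu B)^{-1}$ by the commutation of $B$ with polynomials in $B$, matching the two sides. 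The subtle point is showing that $\mathcal{R}_\mu(\mathbf{A},\bm{\lambda})$ is invertible in the accretive case, i.e.\ that $\mu^{-1}$ is not an eigenvalue of $B$; this is immediate from a spectral bound when the $A_i$ are positive definite, but here one must argue that Lemma \ref{l1} yields $\Re((A_i+\mu I)^{-1})\leq(\Re A_i+\mu I)^{-1}<\mu^{-1}I$ strictly, whence $\Re B<\mu^{-1}I$, so if $Bx=\mu^{-1}x$ for some $x\neq 0$ then $\mu^{-1}\|x\|^2=\Re\langle Bx,x\rangle=\langle\Re(B)x,x\rangle<\mu^{-1}\|x\|^2$, a contradiction.
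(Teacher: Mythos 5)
Your proof is correct and follows the same route the paper intends: direct verification of each item from the defining formula \eqref{e1}, exactly as in the positive definite case. The paper in fact gives no written proof (it only asserts that the properties ``follow from the definition''), so your careful handling of self-duality --- the identity $(A_i^{-1}+\mu^{-1}I)^{-1}=\mu I-\mu^{2}(A_i+\mu I)^{-1}$ together with the observation that $\Re B<\mu^{-1}I$ forces $B^{-1}-\mu I$ to be invertible --- supplies precisely the detail that the accretive setting genuinely requires and that the paper leaves implicit.
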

By inequality (\ref{eq-2}), we conclude that if $ A_{i}, i=1,2, \cdots, m$ are accretive and $ \Phi $ is unital positive linear map,  then
\begin{equation}\label{e4}
 \Phi (\mathcal{R} _{\mu}(\Re \textbf{A}, \bm{\lambda} ))  \leq  \mathcal{R} _{\mu} (\Phi (\Re \textbf{A}), \bm{\lambda}).
\end{equation}
The following theorem is a reverse version of (\ref{e4}).
\begin{theorem}
Let $ A_{i}\in \Gamma_n, i=1, \cdots, m $  be  such that \\ $ h \leq \Re A_{i}+\mu I \leq k, \, \mu \geq 0 $ and $ \Phi $ be a  unital positive linear map. Then
\[  \dfrac{4kh}{(k+h)^{2}}\mathcal{R} _{\mu} (\Phi (\Re \textbf{A}), \bm{\lambda}) \leq \Phi (\mathcal{R} _{\mu}(\Re \textbf{A}, \bm{\lambda} ))+\left( \dfrac{k-h}{k+h} \right)^{2}\mu I . \]
In particular, if $ h\leq \Re A_{i} \leq k, $
$$  \mathcal{H}_{\bm{\lambda}} \left(\Phi(\Re \textbf{A})\right)\leq 
 \dfrac{(k+h)^{2}}{4kh}\Phi \left( \mathcal{H}_{\bm{\lambda}} (\Re \textbf{A})\right). $$
\end{theorem}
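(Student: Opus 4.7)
The plan is to reduce the stated inequality to a single operator inequality in which only positive definite quantities appear, and then establish that inequality by a short chain of Choi and Kantorovich-type bounds. Since $\Phi$ is unital we have $\Phi(\mu I)=\mu I$, and noting that $1-\beta=\bigl(\tfrac{k-h}{k+h}\bigr)^{2}$, the stated inequality is equivalent to
\[
\beta\bigl(\mathcal{R}_{\mu}(\Phi(\Re\textbf{A}),\bm{\lambda})+\mu I\bigr)\le \Phi\bigl(\mathcal{R}_{\mu}(\Re\textbf{A},\bm{\lambda})+\mu I\bigr).
\]
Introducing $B_{i}:=\Re A_{i}+\mu I$, the hypothesis reads $hI\le B_{i}\le kI$, and the inequality becomes
\[
\beta\left(\sum_{i=1}^{m}\lambda_{i}\,\Phi(B_{i})^{-1}\right)^{-1}\le \Phi\!\left(\left(\sum_{i=1}^{m}\lambda_{i}\,B_{i}^{-1}\right)^{-1}\right).
\]

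For the main step I would set $C:=\bigl(\sum_{i}\lambda_{i}B_{i}^{-1}\bigr)^{-1}$, which is positive definite. Applying Lemma~\ref{l3} to $C$ gives $\Phi(C)^{-1}\le \Phi(C^{-1})=\sum_{i}\lambda_{i}\Phi(B_{i}^{-1})$, and applying Lemma~\ref{l4} to each $B_{i}$ (whose spectrum lies in $[h,k]$, for which the Kantorovich constant is exactly $\beta^{-1}=\tfrac{(k+h)^{2}}{4kh}$) gives $\Phi(B_{i}^{-1})\le \beta^{-1}\Phi(B_{i})^{-1}$. Chaining these bounds,
\[
\Phi(C)^{-1}\le \sum_{i=1}^{m}\lambda_{i}\Phi(B_{i}^{-1})\le \beta^{-1}\sum_{i=1}^{m}\lambda_{i}\Phi(B_{i})^{-1}.
\]
Inverting (which reverses the order on positive operators) yields precisely the displayed inequality, and unwinding the reduction gives the main statement.

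The particular case follows by specialising to $\mu=0$: the hypothesis reduces to $h\le \Re A_{i}\le k$, the corrective term $(1-\beta)\mu I$ vanishes, and $\mathcal{R}_{0}(\cdot,\bm{\lambda})=\mathcal{H}_{\bm{\lambda}}(\cdot)$, so one obtains $\beta\,\mathcal{H}_{\bm{\lambda}}(\Phi(\Re\textbf{A}))\le \Phi(\mathcal{H}_{\bm{\lambda}}(\Re\textbf{A}))$; multiplying through by $\beta^{-1}$ is the claimed reverse harmonic inequality. The only subtle point, and the main obstacle, is recognising that \emph{both} Choi and Kantorovich must be applied in sequence at the right place: Choi moves $\Phi$ past the \emph{outer} inverse (producing a weighted sum of $\Phi(B_{i}^{-1})$), and Kantorovich then moves $\Phi$ past each \emph{inner} inverse, producing the single uniform factor $\beta^{-1}$. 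Working with $B_{i}=\Re A_{i}+\mu I$ rather than with $\Re A_{i}$ directly is essential, since the Kantorovich constant is dictated by the bounds on $B_{i}$, which are exactly $h$ and $k$ by hypothesis; a naïve attempt to invoke Theorem~\ref{phi-res-ineq 2} with bounds on $\Re A_{i}$ would produce a different, $\mu$-dependent Kantorovich factor.
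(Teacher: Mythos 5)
Your proposal is correct and follows essentially the same route as the paper: Choi's inequality (Lemma~\ref{l3}) to move $\Phi$ past the outer inverse, linearity, and then the Kantorovich bound (Lemma~\ref{l4}) applied to each $\Re A_i+\mu I$, whose spectra lie in $[h,k]$ by hypothesis. Your rewriting in terms of $B_i=\Re A_i+\mu I$ and the constant $\beta$ is only a notational repackaging of the paper's chain of inequalities.
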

\begin{proof}
By Lemma \ref{l3} and Lemma \ref{l4}, we have
\begin{align*}
\Phi (\mathcal{R} _{\mu}(\Re \textbf{A}, \bm{\lambda} )) &= \Phi \left( \left(\sum_{i=1}^{m}\lambda_{i}( \Re A_{i}+\mu I)^{-1}\right)^{-1} \right)-\mu I\\
&\geq \Phi^{-1} \left( \sum_{i=1}^{m}\lambda_{i}( \Re A_{i}+\mu I)^{-1} \right)-\mu I\\
&=\left( \sum_{i=1}^{m}\, \lambda_{i}\Phi \left(( \Re A_{i}+\mu I)^{-1}\right)  \right)^{-1}-\mu I\\
&\geq \left( \sum_{i=1}^{m}\, \lambda_{i} \dfrac{(M+m)^{2}}{4Mm}\Phi \left(  \Re A_{i}+\mu I \right)^{-1}  \right)^{-1}-\mu I\\
&= \dfrac{4kh}{(k+h)^{2}}  \left( \sum_{i=1}^{m}\,\lambda_{i} (\Phi \left(  \Re A_{i})+\mu I \right)^{-1}  \right)^{-1}-\mu I\\
&= \dfrac{4kh}{(k+h)^{2}} \mathcal{R} _{\mu} (\Phi (\Re \textbf{A}), \bm{\lambda})-\left( \dfrac{k-h}{k+h} \right)^{2}\mu I.
\end{align*}
Then, the proof is complete.
\end{proof}

\begin{corollary}
Let $ A_{i}\in \Gamma_n, i=1, \cdots, m $ are such that $ h \leq \Re A_{i}+\mu I \leq k$, and let $\mu \geq 0 $. Then if $ X \in  \mathcal{M}_n$ is an isometry, i.e. $ X^{*}X=I $, then
\[  \dfrac{4kh}{(k+h)^{2}}\mathcal{R} _{\mu} (X^{*}\Re \textbf{A}X, \bm{\lambda}) \leq X^{*}\mathcal{R} _{\mu}(\Re \textbf{A}, \bm{\lambda} )X+\left( \dfrac{k-h}{k+h} \right)^{2}\mu I . \]
In particular, if $ h\leq \Re A_{i} \leq k , $
$$  \mathcal{H}_{\bm{\lambda}} \left( X^{*}\Re \textbf{A}X\right)\leq 
 \dfrac{(k+h)^{2}}{4kh}  X^{*}\mathcal{H} _{\bm{\lambda}}(\Re \textbf{A})X. $$
\end{corollary}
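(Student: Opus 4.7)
The plan is to deduce this corollary directly from the preceding theorem by choosing a specific positive unital linear map. Define $\Phi : \mathcal{M}_n \to \mathcal{M}_n$ by $\Phi(Y) = X^*YX$. Linearity is immediate, positivity follows from $\langle X^*YXv, v\rangle = \langle Y(Xv), Xv\rangle \geq 0$ whenever $Y \geq O$, and the isometry hypothesis $X^*X = I$ gives $\Phi(I) = X^*X = I$, so $\Phi$ is unital. Thus $\Phi$ satisfies all the hypotheses of the preceding theorem.

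Substituting this $\Phi$ into the inequality
\[\frac{4kh}{(k+h)^2}\,\mathcal{R}_\mu(\Phi(\Re \textbf{A}), \bm{\lambda}) \leq \Phi(\mathcal{R}_\mu(\Re \textbf{A}, \bm{\lambda})) + \left(\frac{k-h}{k+h}\right)^2 \mu I,\]
yields, by the definition of $\Phi$ and the convention $\Phi(\Re \textbf{A}) = (\Phi(\Re A_1), \ldots, \Phi(\Re A_m)) = (X^*\Re A_1 X, \ldots, X^*\Re A_m X)$, precisely the first displayed inequality in the corollary. There is nothing further to verify beyond checking that the hypothesis $h \leq \Re A_i + \mu I \leq k$ is inherited unchanged, which is the same assumption used in the theorem.

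For the particular case, I would take $\mu = 0$ (which trivially satisfies $\mu \geq 0$ and the hypothesis reduces to $h \leq \Re A_i \leq k$). Recalling from the definition that $\mathcal{R}_0(\textbf{A}, \bm{\lambda}) = \mathcal{H}_{\bm{\lambda}}(\textbf{A})$, and noting that the additive error term $\left(\frac{k-h}{k+h}\right)^2 \mu I$ vanishes, the first inequality becomes
\[\frac{4kh}{(k+h)^2}\,\mathcal{H}_{\bm{\lambda}}(X^*\Re \textbf{A}X) \leq X^*\mathcal{H}_{\bm{\lambda}}(\Re \textbf{A})X,\]
and multiplying both sides by $\frac{(k+h)^2}{4kh} > 0$ gives the second displayed inequality.

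This proof is purely formal, so I do not anticipate any real obstacle; the only delicate point is verifying that $\Phi(Y) = X^*YX$ is unital, which is exactly where the isometry condition $X^*X = I$ is used (a general contraction would only give $\Phi(I) \leq I$ and fail to meet the hypothesis of the previous theorem).
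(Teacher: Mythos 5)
Your proof is correct and is exactly the intended derivation: the paper states this corollary as an immediate specialization of the preceding theorem to the positive unital linear map $\Phi(Y)=X^{*}YX$, with the second inequality obtained by setting $\mu=0$ so that $\mathcal{R}_{0}=\mathcal{H}_{\bm{\lambda}}$ and the error term vanishes. Your verification that $X^{*}X=I$ is precisely what makes $\Phi$ unital is the only point needing comment, and you handled it correctly.
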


\begin{example}
If \(A_i\in \Gamma_n, i=1,2,\cdots,m\), then \(\Re  (\mathcal{R} _{\mu}(\textbf{A}, \bm{\lambda})\leq  (\mathcal{R} _{\mu}(\Re \textbf{A}, \bm{\lambda} )) \) does not hold in general. To see that, set $ A_{1}=I+iI, A_{2}=I, \bm{\lambda}_{1}=\bm{\lambda}_{2}=\frac{1}{2} $ and $ \mu=1. $ 
It is easy to see $ \Re \left(\mathcal{R} _{\mu}( \textbf{A}, \bm{\lambda} )\right)=\frac{36-\sqrt{17}}{\sqrt{17}}I $ but $   \mathcal{R} _{\mu}( \Re\textbf{A}, \bm{\lambda} )=I$. 
\end{example}


\begin{theorem}\label{t1}
Let \(A_i\in \Pi_{n,\alpha}, i=1,2,\cdots,m,\). Then
\[     \mathcal{R} _{\mu}(\Re \textbf{A}, \bm{\lambda} ) + \mu I   \leq  \sec^{2} \gamma \, \left(\Re \mathcal{R} _{\mu}(\textbf{A}, \bm{\lambda} )+\mu  I\right). \]
In particular, as $ \mu \rightarrow 0^+ $ 
\[ \mathcal{H} _{\bm{\lambda}}(\Re \textbf{A} ) \leq \sec^{2} \gamma\, \Re \Big(\mathcal{H}_{\bm{\lambda}}( \textbf{A} )\Big).\]
\end{theorem}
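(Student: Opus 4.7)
The plan is to recast both sides of the inequality directly in terms of the defining formula for $\mathcal{R}_\mu$ and then apply the two-sided sectorial estimate in Lemma \ref{l1} twice: once to push $\Re$ inside each inverse $(A_i+\mu I)^{-1}$, and once to compare $\Re^{-1}(D)$ with $\Re(D^{-1})$ for a suitable accretive matrix $D$. The key observation at the start is that
\[
\mathcal{R}_{\mu}(\Re\textbf{A},\bm\lambda)+\mu I=C^{-1},\qquad \mathcal{R}_{\mu}(\textbf{A},\bm\lambda)+\mu I=D^{-1},
\]
where
\[
C=\sum_{i=1}^m\lambda_i(\Re A_i+\mu I)^{-1},\qquad D=\sum_{i=1}^m\lambda_i(A_i+\mu I)^{-1},
\]
so the target inequality is equivalent to $C^{-1}\le \sec^2\gamma\,\Re(D^{-1})$.

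Next, I would record that $A_i+\mu I\in \Pi_{n,\gamma}$ by Remark \ref{sectorial index}, so the first half of Lemma \ref{l1} applied termwise gives
\[
\Re\bigl((A_i+\mu I)^{-1}\bigr)\le \bigl(\Re(A_i+\mu I)\bigr)^{-1}=(\Re A_i+\mu I)^{-1}.
\]
Averaging with weights $\lambda_i$ yields $\Re(D)\le C$, and inverting (which reverses the order on the positive cone) produces the first piece: $C^{-1}\le \Re^{-1}(D)$.

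For the second piece, note that $D$ itself is a convex combination of matrices in $\Pi_{n,\gamma}$ (each inverse $(A_i+\mu I)^{-1}$ lies in $\Pi_{n,\gamma}$ by Lemma \ref{l1}, and the sector $\mathcal S_\gamma$ is convex), hence $D\in\Pi_{n,\gamma}$. Applying the second half of Lemma \ref{l1} to $D$ gives
\[
\Re^{-1}(D)\le \sec^2\gamma\,\Re(D^{-1}).
\]
Chaining this with the first piece yields the desired inequality
\[
\mathcal{R}_{\mu}(\Re\textbf{A},\bm\lambda)+\mu I=C^{-1}\le \sec^2\gamma\,\Re(D^{-1})=\sec^2\gamma\,\bigl(\Re\mathcal{R}_{\mu}(\textbf{A},\bm\lambda)+\mu I\bigr).
\]
For the ``in particular'' claim, I would simply let $\mu\to 0^+$: the definition of $\mathcal{R}_0$ collapses to $\mathcal{H}_{\bm\lambda}$, and $\gamma=\tan^{-1}\tfrac{\tan\alpha}{1+\mu}\to\alpha$, giving $\mathcal{H}_{\bm\lambda}(\Re\textbf A)\le \sec^2\gamma\,\Re\mathcal{H}_{\bm\lambda}(\textbf A)$.

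The only real obstacle I anticipate is the bookkeeping for the sectorial index of $D$: one must be careful that the two applications of Lemma \ref{l1} are justified with the \emph{same} angle $\gamma$, which is exactly why Remark \ref{sectorial index} and the convexity of $\mathcal S_\gamma$ are invoked; everything else is a clean two-step application of the sectorial estimate combined with order-reversal under inversion.
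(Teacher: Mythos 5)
Your proposal is correct and is essentially the paper's own argument: the paper likewise applies Lemma \ref{l1} termwise to get $\Re(D)\le C$ (hence $C^{-1}\le\Re^{-1}(D)$) and then applies the second sectorial estimate $\Re^{-1}(D)\le\sec^2\gamma\,\Re(D^{-1})$ to the aggregated matrix $D$. Your version is if anything slightly more careful, since you explicitly justify that $D\in\Pi_{n,\gamma}$ via convexity of the sector, a point the paper leaves implicit.
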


\begin{proof}
By Lemma \ref{l1},  we have
\begin{align*}
\mathcal{R} _{\mu}(\Re \textbf{A}, \bm{\lambda} ) &=  \left ( \sum_{i=1}^{m} \, \lambda_{i} (\Re (A_{i}) +\mu I)^{-1} \right)^{-1}-\mu I\\
&=    \left ( \sum_{i=1}^{m} \lambda_{i} \Re^{-1} (A_{i}+\mu I) \right)^{-1}-\mu I\\
& \leq  \left ( \sum_{i=1}^{m} \lambda_{i} \Re  \left( ( A_{i}+\mu I )^{-1}\right)  \right)^{-1}-\mu I\\
&= \Re^{-1} \left(  \sum_{i=1}^{m}\, \lambda_{i}  ( A_{i}+\mu I )^{-1}\right) -\mu I\\ 
&\leq \sec^{2}\gamma \, \Re \left( \left( \sum_{i=1}^{m}\,\lambda_{i} ( A_{i}+\mu I )^{-1} \right)^{-1}\right) -\mu I \\
&= \sec^{2}\gamma \, \Re \mathcal{R} _{\mu}(\textbf{A}, \bm{\lambda} )+\mu (\sec^{2}\gamma -1)I.
\end{align*}

Hence, the proof is complete.
\end{proof}

Theorem \ref{t2} provides a reverse version of Theorem \ref{t1}.

\begin{theorem}\label{t2}
Let \(A_i\in \Pi_{n,\alpha}, i=1,2,\cdots,m,\). Then

$$ \Re \mathcal{R} _{\mu}(\textbf{A}, \bm{\lambda} )+\mu I  \leq \sec^{2}\gamma \left( \mathcal{R} _{\mu}(\Re\textbf{A}, \bm{\lambda} )+ \mu I \right). $$
In particular, as $ \mu \rightarrow 0^{+} $ 
\[ \Re \mathcal{H}_{\bm{\lambda}}(\textbf{A}) \leq \sec^{2} \gamma\, \mathcal{H}_{\bm{\lambda}} (\Re \textbf{A} ).\]
\end{theorem}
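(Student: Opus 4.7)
\noindent\textbf{Proof plan for Theorem \ref{t2}.} The plan is to reduce everything to a single application of Lemma \ref{l1} to the matrix
\[
B := \sum_{i=1}^{m}\lambda_{i}(A_{i}+\mu I)^{-1},
\]
together with a term-by-term application of the same lemma to each summand. Note that $\mathcal{R}_{\mu}(\textbf{A},\bm{\lambda})+\mu I = B^{-1}$, so since $\mu I$ is Hermitian we have
\[
\Re\mathcal{R}_{\mu}(\textbf{A},\bm{\lambda})+\mu I = \Re(B^{-1}).
\]
By Remark \ref{sectorial index}, each $A_{i}+\mu I$ lies in $\Pi_{n,\gamma}$; hence each $(A_{i}+\mu I)^{-1}$ lies in $\Pi_{n,\gamma}$ by Lemma \ref{l1}, and so does $B$. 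Therefore Lemma \ref{l1} applies to $B$, giving $\Re(B^{-1})\leq [\Re(B)]^{-1}$. This is the first of two ingredients.

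The second ingredient is to bound $\Re(B)$ below in terms of $\mathcal{R}_{\mu}(\Re\textbf{A},\bm{\lambda})+\mu I$. Using the right half of Lemma \ref{l1} for each $A_{i}+\mu I\in\Pi_{n,\gamma}$, I obtain
\[
(\Re A_{i}+\mu I)^{-1}=\Re^{-1}(A_{i}+\mu I)\leq \sec^{2}\gamma\,\Re\bigl((A_{i}+\mu I)^{-1}\bigr).
\]
Multiplying by $\lambda_{i}$ and summing over $i$ yields
\[
\sum_{i=1}^{m}\lambda_{i}(\Re A_{i}+\mu I)^{-1}\leq \sec^{2}\gamma\,\Re(B),
\]
which after inversion (order-reversing on positive matrices) becomes
\[
[\Re(B)]^{-1}\leq \sec^{2}\gamma\left(\sum_{i=1}^{m}\lambda_{i}(\Re A_{i}+\mu I)^{-1}\right)^{-1}=\sec^{2}\gamma\,\bigl(\mathcal{R}_{\mu}(\Re\textbf{A},\bm{\lambda})+\mu I\bigr).
\]

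Chaining the two inequalities $\Re(B^{-1})\leq[\Re(B)]^{-1}$ and the previous display gives the desired bound. For the ``in particular'' statement, letting $\mu\to 0^{+}$ makes $\gamma\to\alpha$ and collapses $\mathcal{R}_{\mu}$ to the weighted harmonic mean by continuity, yielding $\Re\mathcal{H}_{\bm{\lambda}}(\textbf{A})\leq\sec^{2}\gamma\,\mathcal{H}_{\bm{\lambda}}(\Re\textbf{A})$ (with $\gamma$ replaced by $\alpha$, matching the form obtained in Theorem \ref{t1}).

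The only delicate step is ensuring the sectorial hypothesis needed to invoke Lemma \ref{l1} on $B$ itself; this is handled by Remark \ref{sectorial index}, which shows that sums and inverses of matrices in $\Pi_{n,\gamma}$ remain in $\Pi_{n,\gamma}$. Everything else is a careful bookkeeping of the two one-sided estimates in Lemma \ref{l1} and the order-reversing property of inversion on positive definite matrices.
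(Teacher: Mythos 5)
Your proposal is correct and follows essentially the same route as the paper's proof: both apply the right-hand estimate of Lemma \ref{l1} to each $A_i+\mu I\in\Pi_{n,\gamma}$, sum, invert, and then apply the left-hand estimate $\Re(B^{-1})\leq[\Re(B)]^{-1}$ to $B=\sum_i\lambda_i(A_i+\mu I)^{-1}$; the paper simply writes the chain starting from $\mathcal{R}_\mu(\Re\textbf{A},\bm{\lambda})$ and multiplies through by $\cos^2\gamma$, whereas you run it in the opposite direction. No gaps.
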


\begin{proof}
By Lemma \ref{l1},  we have
\begin{align*}
\mathcal{R} _{\mu}(\Re \textbf{A}, \bm{\lambda} ) &=  \left ( \sum_{i=1}^{m} \lambda_{i} (\Re (A_{i}) +\mu I)^{-1} \right)^{-1}-\mu I\\
&=  \left ( \sum_{i=1}^{m} \lambda_{i} \Re^{-1} (A_{i}+\mu I) \right)^{-1}-\mu I\\
&\geq\left ( \sum_{i=1}^{m} \lambda_{i} \sec^{2}\gamma\, \Re  \left( ( A_{i}+\mu I )^{-1}\right)  \right)^{-1}-\mu I\\
&=\cos^{2}\gamma \,\Re^{-1} \left (  \sum_{i=1}^{m} \lambda_{i}    ( A_{i}+\mu I )^{-1} \right)-\mu I\\
&\geq \cos^{2}\gamma\, \Re \left ( \left ( \sum_{i=1}^{m} \lambda_{i}    ( A_{i}+\mu I )^{-1} \right)^{-1}\right)-\mu I\\
&=\cos^{2}\gamma\, \Re \mathcal{R} _{\mu}(\textbf{A}, \bm{\lambda} )-( \mu \sin^{2} \gamma )I.
\end{align*}
Hence, the proof is complete.
\end{proof}

\begin{corollary}
Let \(A_i\in \Pi_{n,\alpha}, i=1,2,\cdots,m,\). Then
\begin{equation}\label{c1}
 \cos^{2} \gamma \, \Re \mathcal{R} _{\mu}(\textbf{A}, \bm{\lambda} )- \mu \sin^{2}\gamma\, I  \leq \mathcal{R} _{\mu}(\Re \textbf{A}, \bm{\lambda} )    \leq  \sec^{2} \gamma \, \Re \mathcal{R} _{\mu}(\textbf{A}, \bm{\lambda} )+ \mu \tan^{2}\gamma\, I,
\end{equation}
and
\begin{equation}\label{c2}
\cos^{2} \gamma \, \mathcal{R} _{\mu}(\Re \textbf{A}, \bm{\lambda} ) -\mu \sin^{2}\gamma\, I  \leq  \Re \mathcal{R} _{\mu}(\textbf{A}, \bm{\lambda} )  \leq  \sec^{2} \gamma \, \mathcal{R} _{\mu} \Re \textbf{A}, \bm{\lambda} )+ \mu \tan^{2}\gamma \, I.
\end{equation}
\end{corollary}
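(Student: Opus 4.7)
The plan is to derive the four inequalities in \eqref{c1} and \eqref{c2} directly from Theorems \ref{t1} and \ref{t2} by simple rearrangements in the Löwner order; no new matrix-analytic input is required, since the hard work (applying Lemma \ref{l1} and the monotonicity of $\Re^{-1}$ versus $\Re(\cdot^{-1})$) has already been done in those two theorems.

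First I would address the upper bound in \eqref{c1}. Starting from Theorem \ref{t1},
\[
\mathcal{R}_{\mu}(\Re\textbf{A},\bm{\lambda})+\mu I\;\leq\;\sec^{2}\gamma\,\bigl(\Re\mathcal{R}_{\mu}(\textbf{A},\bm{\lambda})+\mu I\bigr),
\]
I would expand the right-hand side, subtract $\mu I$ from both sides, and invoke the identity $\sec^{2}\gamma-1=\tan^{2}\gamma$ to obtain
\[
\mathcal{R}_{\mu}(\Re\textbf{A},\bm{\lambda})\;\leq\;\sec^{2}\gamma\,\Re\mathcal{R}_{\mu}(\textbf{A},\bm{\lambda})+\mu\tan^{2}\gamma\,I,
\]
which is exactly the right-hand inequality of \eqref{c1}. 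For the lower bound in \eqref{c2}, I would take the same inequality of Theorem \ref{t1} and multiply throughout by the positive scalar $\cos^{2}\gamma$, which preserves the Löwner order, to get $\cos^{2}\gamma\,\mathcal{R}_{\mu}(\Re\textbf{A},\bm{\lambda})+\mu\cos^{2}\gamma\,I\leq\Re\mathcal{R}_{\mu}(\textbf{A},\bm{\lambda})+\mu I$; isolating $\Re\mathcal{R}_{\mu}(\textbf{A},\bm{\lambda})$ and using $1-\cos^{2}\gamma=\sin^{2}\gamma$ gives the lower bound in \eqref{c2}.

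The remaining two inequalities follow analogously from Theorem \ref{t2}, which reads
\[
\Re\mathcal{R}_{\mu}(\textbf{A},\bm{\lambda})+\mu I\;\leq\;\sec^{2}\gamma\,\bigl(\mathcal{R}_{\mu}(\Re\textbf{A},\bm{\lambda})+\mu I\bigr).
\]
Subtracting $\mu I$ from both sides and simplifying with $\sec^{2}\gamma-1=\tan^{2}\gamma$ yields the upper bound in \eqref{c2}, while multiplying through by $\cos^{2}\gamma$ and isolating $\mathcal{R}_{\mu}(\Re\textbf{A},\bm{\lambda})$ produces the lower bound in \eqref{c1}.

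There is no real obstacle; the only point requiring care is the bookkeeping of the $\mu$-terms and remembering that multiplying a Löwner inequality by the positive scalar $\cos^{2}\gamma$ and adding or subtracting a scalar multiple of $I$ preserves the inequality, so all manipulations are legitimate. The corollary thus amounts to a pair of algebraic rewrites of Theorems \ref{t1} and \ref{t2}.
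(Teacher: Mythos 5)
Your proposal is correct and matches the paper's intent exactly: the corollary is stated without proof precisely because it is the algebraic rearrangement of Theorems \ref{t1} and \ref{t2} (adding $\mu I$, multiplying by $\cos^{2}\gamma$, and using $\sec^{2}\gamma-1=\tan^{2}\gamma$, $1-\cos^{2}\gamma=\sin^{2}\gamma$) that you carry out. All four bounds and their attributions to the two theorems check out.
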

A generalization of \eqref{e0001} form the setting of positive definite matrices to sectorial ones is given by the following corollary. 
\begin{corollary}
Let \(A_i\in \Pi_{n,\alpha}, i=1,2,\cdots,m,\). Then
\begin{align*}
\cos^{2}\gamma  \mathcal{H}( \Re \textbf{A}, \bm{\lambda} )-\mu \sin^{2}\gamma \,I & \leq \Re\mathcal{R} _{\mu}(\textbf{A}, \bm{\lambda} )\\
&\leq \sec^{2}\gamma \Re \mathcal{ A}(\textbf{A}, \bm{\lambda} )+\mu\tan^{2}\gamma \,I.\\ 
\end{align*}
\end{corollary}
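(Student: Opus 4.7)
The plan is to derive the two-sided estimate as an immediate consequence of the sectorial sandwiching \eqref{c2} together with the classical Harmonic--Resolvent--Arithmetic mean inequality \eqref{e0001}, applied this time to the tuple \(\Re\textbf{A}=(\Re A_1,\ldots,\Re A_m)\). Since each \(A_i\in\Pi_{n,\alpha}\subset\Gamma_n\) has \(\Re A_i\in\mathcal{M}_n^{++}\), the tuple \(\Re\textbf{A}\) is a legitimate input for the positive-definite resolvent theory, and \eqref{e0001} therefore gives
\[
\mathcal{H}_{\bm\lambda}(\Re\textbf{A}) \;\leq\; \mathcal{R}_\mu(\Re\textbf{A},\bm\lambda) \;\leq\; \mathcal{A}_{\bm\lambda}(\Re\textbf{A}).
\]

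For the lower estimate of the corollary, I would start from the left half of \eqref{c2}, namely
\(\cos^2\gamma\,\mathcal{R}_\mu(\Re\textbf{A},\bm\lambda)-\mu\sin^2\gamma\,I\leq\Re\mathcal{R}_\mu(\textbf{A},\bm\lambda)\),
and then replace \(\mathcal{R}_\mu(\Re\textbf{A},\bm\lambda)\) by the smaller quantity \(\mathcal{H}_{\bm\lambda}(\Re\textbf{A})\); multiplication by the positive scalar \(\cos^2\gamma\) preserves the order and the shift by \(-\mu\sin^2\gamma\,I\) is harmless, producing the claimed lower bound. For the upper estimate, I would take the right half of \eqref{c2}, \(\Re\mathcal{R}_\mu(\textbf{A},\bm\lambda)\leq\sec^2\gamma\,\mathcal{R}_\mu(\Re\textbf{A},\bm\lambda)+\mu\tan^2\gamma\,I\), and enlarge the resolvent term to \(\mathcal{A}_{\bm\lambda}(\Re\textbf{A})\) via \eqref{e0001}. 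A final cosmetic identification, using \(\mathcal{A}_{\bm\lambda}(\Re\textbf{A})=\sum_{i=1}^m\lambda_i\Re A_i=\Re\!\left(\sum_{i=1}^m\lambda_i A_i\right)=\Re\mathcal{A}_{\bm\lambda}(\textbf{A})\), rewrites the right-hand side in the form stated in the corollary.

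There is essentially no obstacle in this argument: the only point requiring verification is that \eqref{e0001} is legitimately applicable, which is immediate since accretivity forces \(\Re A_i>O\), after which the order-preserving scalings and shifts by the positive constants \(\cos^2\gamma,\sec^2\gamma,\mu\sin^2\gamma,\mu\tan^2\gamma\) are routine. In effect, the corollary simply advertises what one gets by concatenating the two chains of inequalities \eqref{c2} and \eqref{e0001} applied to \(\Re\textbf{A}\).
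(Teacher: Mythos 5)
Your proposal is correct and follows exactly the route the paper intends: the corollary is stated without proof, but it is precisely the concatenation of the two-sided estimate \eqref{c2} with the Harmonic--Resolvent--Arithmetic inequality \eqref{e0001} applied to $\Re\textbf{A}$, together with the identity $\mathcal{A}_{\bm{\lambda}}(\Re\textbf{A})=\Re\mathcal{A}_{\bm{\lambda}}(\textbf{A})$, which is the very combination the paper's subsequent remark points to.
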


\begin{remark}
It is clear that $ \Re \mathcal{A}_{\bm{\lambda}}(\textbf{A} )= \mathcal{A}_{\bm{\lambda}}(\Re \textbf{A} ), $ but 
by combining  \eqref{e0001} for $ \Re \textbf{A}$ and (\ref{c2}), we get other accretive versions of \eqref{e0001}.
\end{remark}
As an application of Theorem \ref{t1}, we present the following accretive version of Theorem \ref{resolvent-geometric-mean}.
\begin{theorem}
     Let \(A,B\in \Pi_{n,\alpha}\) be such that \(0< h\leq \Re(A), \Re(B)\leq k\) for some \(0<h<k\), and let \(t=\min \lbrace \lambda,1-\lambda\rbrace\). Then \[\Re(\mathcal{R}_{\mu}(A,B,\bm{\lambda}))+\mu I \geq C \cos^2 \gamma\, \Big(\cos^2 \alpha\, \Re(A\sharp_{\bm{\lambda}}\, B) + \mu I\Big),\]
where \(C=\Big(\dfrac{k^*\nabla_t h^*}{k^*\sharp_t h^*}\Big)^{-1} \).
 \end{theorem}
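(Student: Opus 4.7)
The plan is to reduce the claim to the positive-definite setting by passing to real parts, invoke Theorem~\ref{resolvent-geometric-mean} there, and then translate the conclusion back to the accretive world using the real-part comparison inequalities developed earlier in this section.

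First, since $A,B\in\Pi_{n,\alpha}$ with $0<h\leq\Re A,\Re B\leq k$, the matrices $\Re A,\Re B$ are positive definite and satisfy the hypotheses of Theorem~\ref{resolvent-geometric-mean}. Applying that theorem to the pair $(\Re A,\Re B)$ yields
\[\mathcal{R}_{\mu}(\Re A,\Re B,1-\lambda,\lambda)+\mu I \;\geq\; C\bigl(\Re A \sharp_{\bm{\lambda}} \Re B + \mu I\bigr).\]

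Second, the left-hand inequality in \eqref{c2} (which is the lower bound derived from Theorem~\ref{t2}) reads
\[\cos^{2}\gamma\, \mathcal{R}_{\mu}(\Re\mathbf{A},\bm{\lambda}) - \mu\sin^{2}\gamma\, I \;\leq\; \Re\mathcal{R}_{\mu}(\mathbf{A},\bm{\lambda}).\]
Specializing to $\mathbf{A}=(A,B)$ with weights $(1-\lambda,\lambda)$, adding $\mu I$ to both sides, and using $1-\sin^{2}\gamma=\cos^{2}\gamma$, we obtain
\[\Re\mathcal{R}_{\mu}(A,B,\bm{\lambda})+\mu I \;\geq\; \cos^{2}\gamma\,\bigl(\mathcal{R}_{\mu}(\Re A,\Re B,\bm{\lambda})+\mu I\bigr).\]

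Third, a weighted version of Lemma~\ref{l-6} gives $\Re(A\sharp_{\bm{\lambda}} B)\leq \sec^{2}\alpha\,(\Re A\sharp_{\bm{\lambda}}\Re B)$, equivalently
\[\Re A\sharp_{\bm{\lambda}}\Re B \;\geq\; \cos^{2}\alpha\,\Re(A\sharp_{\bm{\lambda}} B).\]
Adding $\mu I$ on the right (using $\cos^{2}\alpha\leq 1$) and multiplying by the positive constant $C$ yields
\[C\bigl(\Re A\sharp_{\bm{\lambda}}\Re B + \mu I\bigr) \;\geq\; C\bigl(\cos^{2}\alpha\,\Re(A\sharp_{\bm{\lambda}} B)+\mu I\bigr).\]

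Chaining the three displayed inequalities produces exactly the stated bound. The main obstacle is step three, since Lemma~\ref{l-6} as cited is given only for the unweighted geometric mean $\sharp=\sharp_{1/2}$. We would either invoke the known weighted extension from the accretive matrix literature (see, e.g., the works cited in the introduction) or record a short preliminary lemma proving it by the same strategy as in \cite{Tan}: sectoriality of $A^{-1/2}BA^{-1/2}$ together with operator monotonicity of $t\mapsto t^{\lambda}$ on the appropriate sector. Once this weighted analog is in hand, the remainder of the argument is purely a concatenation, so the estimate is essentially forced by Theorem~\ref{resolvent-geometric-mean}, inequality \eqref{c2}, and the sectorial $\sharp_{\bm{\lambda}}$ inequality.
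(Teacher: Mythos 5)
Your proof is correct and follows essentially the same route as the paper: the paper's own argument chains Theorem \ref{t1} (equivalently the left inequality of \eqref{c2}, which you slightly misattribute to Theorem \ref{t2} — it is the rearrangement of Theorem \ref{t1}), then Theorem \ref{resolvent-geometric-mean} applied to $(\Re A,\Re B)$, then Lemma \ref{l-6}, in exactly the order you give. Your observation that Lemma \ref{l-6} is stated only for $\sharp_{1/2}$ and needs a weighted extension to justify the final step for general $\lambda$ is a fair point that the paper itself glosses over.
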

 \begin{proof}
     \begin{align*}
         \Re(\mathcal{R}_{\mu}(A,B,\bm{\lambda}))+\mu I  &= \Re\Big(\mathcal{R}_{\mu}(A,B,\bm{\lambda})+\mu I\Big)\\
        & \geq \cos^2 \gamma\,  \Big(\mathcal{R}_{\mu}(\Re(A),\Re(B),\bm{\lambda})+\mu I\Big)\; (\text{by Theorem \ref{t1}})\\
        & \geq C \cos^2 \gamma\, \Big( \Re(A)\sharp_{\bm{\lambda}} \Re(B) +\mu I\Big)\; (\text{by Theorem \ref{resolvent-geometric-mean}})\\
        &\geq C \cos^2 \gamma\, \Big( \cos^2 \alpha\, \Re(A\sharp_{\bm{\lambda}}\, B) +\mu I\Big) \, (\text{ by Lemma \ref{l-6}}).
\end{align*}
 \end{proof}
We conclude this section with the following interesting property of the resolvent average for two matrices.\\
If $ \textbf{A}= (A, B), \, \bm{\lambda} = (\bm{\lambda}, 1-\bm{\lambda}), $ then for $ \mu > 0 $, we have
\begin{equation}\label{e4-1}
\mathcal{R} _{\mu}(\textbf{A}, \bm{\lambda} )= \left(\lambda (A+\mu I)^{-1}+ (1-\lambda) (B+\mu I)^{-1} \right)^{-1}-\mu I.
\end{equation}
 \begin{theorem}
 For positive definite matrices $ A, B \in \mathcal{M}_n, \mu > 0$ and 
 $ 0 < \lambda <1,  $ the relation 
 
  $$ \mathcal{R}_{\mu}(\textbf{A},\bm{\lambda})= \left(\lambda (A+\mu I)^{-1}+ (1-\lambda) (B+\mu I)^{-1} \right)^{-1}-\mu I$$
 is a matrix mean.
 \end{theorem}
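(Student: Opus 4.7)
The plan is to verify the four defining properties of a Kubo--Ando matrix mean---normalization, monotonicity, continuity from above, and the transformer inequality---for the operation
\[
M(A,B)\;:=\;\bigl(\lambda(A+\mu I)^{-1}+(1-\lambda)(B+\mu I)^{-1}\bigr)^{-1}-\mu I.
\]
The key observation driving the entire argument is the identity $M(A,B)+\mu I=\sigma_\lambda(A+\mu I,\,B+\mu I)$, where $\sigma_\lambda(X,Y):=(\lambda X^{-1}+(1-\lambda)Y^{-1})^{-1}$ is the weighted harmonic mean, an established Kubo--Ando mean. I will inherit each axiom from the corresponding axiom for $\sigma_\lambda$ via the affine shift $X\mapsto X+\mu I$.

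First I would dispatch the three easy axioms. The normalization $M(I,I)=I$ reduces to $(1+\mu)I-\mu I=I$. Monotonicity is immediate because the shift $X\mapsto X+\mu I$ preserves the order, $\sigma_\lambda$ is monotone in both arguments as a connection, and subtracting the constant $\mu I$ preserves inequalities. Continuity from above is handled identically: $A_n\downarrow A$ iff $A_n+\mu I\downarrow A+\mu I$, the continuity of $\sigma_\lambda$ transfers, and subtracting $\mu I$ closes the argument.

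The transformer inequality $CM(A,B)C\leq M(CAC,CBC)$ is the substantive step. I would chain two applications. First, the transformer inequality of $\sigma_\lambda$ applied to $(A+\mu I,\,B+\mu I)$ gives
\[
CM(A,B)C+\mu C^2\;\leq\;\sigma_\lambda(CAC+\mu C^2,\,CBC+\mu C^2).
\]
Second, Lemma~\ref{KA ineq} applied to the decompositions $CAC+\mu I=(CAC+\mu C^2)+\mu(I-C^2)$ and $CBC+\mu I=(CBC+\mu C^2)+\mu(I-C^2)$, together with idempotency of $\sigma_\lambda$ on the diagonal pair $(\mu(I-C^2),\mu(I-C^2))$, yields
\[
\sigma_\lambda(CAC+\mu I,\,CBC+\mu I)\;\geq\;\sigma_\lambda(CAC+\mu C^2,\,CBC+\mu C^2)+\mu(I-C^2),
\]
\emph{provided} $C^2\leq I$. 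Chaining the two displays and cancelling $\mu I$ yields $CM(A,B)C\leq M(CAC,CBC)$ for every contraction $C$.

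The main obstacle is extending this transformer inequality beyond contractions, since the natural rescaling trick is blocked by the nontrivial homogeneity identity $\mathcal{R}_\mu(\alpha\mathbf{A},\bm\lambda)=\alpha\mathcal{R}_{\mu/\alpha}(\mathbf{A},\bm\lambda)$, which couples a scalar rescaling of $C$ with a shift of the internal parameter $\mu$. A complementary route is to identify the candidate representing function $f(x)=\frac{x(1+\mu(1-\lambda))+\mu\lambda}{\lambda x+(1-\lambda)+\mu}$: its Möbius discriminant works out to $(1-\lambda)(1+\mu)^2>0$ and its pole sits at $-\frac{1-\lambda+\mu}{\lambda}<0$, so $f$ is operator monotone on $(0,\infty)$ with $f(1)=1$; this at least confirms that a legitimate Kubo--Ando mean with the correct scalar behaviour of $M$ exists. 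Reconciling this Kubo--Ando mean with the affine-shift form of $M$ to close the transformer inequality for unrestricted $C$ is the step I anticipate will demand the most care.
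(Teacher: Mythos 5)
Your verification of normalization, monotonicity, continuity from above, and the transformer inequality for contractions is correct, and so is your Möbius computation of $f$; but the step you defer --- the transformer inequality for non-contractive $C$ --- is not a step that needs more care, it is a step that fails. Take $C=\sqrt{\alpha}\,I$ with $\alpha>1$. The transformer inequality would force $\alpha\,\mathcal{R}_{\mu}(A,B,\bm{\lambda})\leq \mathcal{R}_{\mu}(\alpha A,\alpha B,\bm{\lambda})=\alpha\,\mathcal{R}_{\mu/\alpha}(A,B,\bm{\lambda})$ by homogeneity (Proposition \ref{properties of res average}, part 5), i.e.\ $\mathcal{R}_{\mu}\leq \mathcal{R}_{\mu/\alpha}$, while monotonicity in the parameter (part 7) gives the reverse. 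Equality does not hold: already for $1\times 1$ matrices with $\lambda=1/2$, $\mu=1$, $\alpha=2$ one gets $\mathcal{R}_{1}(2,4)=11/4$ and $\mathcal{R}_{1/2}(2,4)=19/7$, and the transformer inequality would demand $11/4\leq 19/7$, which is false. So for fixed $\mu>0$ the operation is not positively homogeneous, hence not a connection and not a Kubo--Ando mean under the paper's own definition; no argument along your lines (or any other) can close the gap.

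For comparison, the paper's proof takes exactly the route of your final paragraph: it exhibits $f(t)=\left(\lambda(1+\mu)^{-1}+(1-\lambda)(t+\mu)^{-1}\right)^{-1}-\mu$, checks operator monotonicity and $f(1)=1$ (both fine, and consistent with your determinant computation $(1-\lambda)(1+\mu)^{2}>0$), and then asserts $A^{1/2}f\left(A^{-1/2}BA^{-1/2}\right)A^{1/2}=\mathcal{R}_{\mu}(\textbf{A},\bm{\lambda})$. That asserted identity is precisely the reconciliation you predicted would demand the most care, and it is false: unwinding the Kubo--Ando formula gives
\[
A^{1/2}f\left(A^{-1/2}BA^{-1/2}\right)A^{1/2}=\left(\lambda\left((1+\mu)A\right)^{-1}+(1-\lambda)\left(B+\mu A\right)^{-1}\right)^{-1}-\mu A,
\]
which coincides with $\left(\lambda(A+\mu I)^{-1}+(1-\lambda)(B+\mu I)^{-1}\right)^{-1}-\mu I$ only when $A=I$. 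Your instinct to distrust that identification was correct. What your argument honestly establishes is the weaker, and true, statement: $\mathcal{R}_{\mu}$ satisfies all mean axioms except the transformer inequality for expansive $C$, and $f$ generates a genuine Kubo--Ando mean that agrees with $\mathcal{R}_{\mu}$ on pairs of the form $(I,B)$. The theorem should be restated in one of those forms.
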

 \begin{proof}
     Let \(A,B\in \mathcal{M}_n^{++}, \,\mu >0 \text{ and } \lambda\in (0,1)\). For \(t\in (0,1)\), set 
     \[f(t)= \left(\lambda (1+\mu )^{-1}+ (1-\lambda) (t+\mu )^{-1} \right)^{-1}-\mu.\]

Since the function $t\mapsto t^{-1}$ is matrix monotone decreasing on $(0,\infty)$, it follows that $f$ is matrix monotone increasing with $f(1)=1$. Therefore, the relation
\[(A,B)\mapsto A^{\frac{1}{2}}f\left(A^{-\frac{1}{2}}BA^{-\frac{1}{2}}\right)A^{\frac{1}{2}}\]
is indeed a matrix mean. But, in this case, 
\[A^{\frac{1}{2}}f\left(A^{-\frac{1}{2}}BA^{-\frac{1}{2}}\right)A^{\frac{1}{2}}=\mathcal{R}_{\mu}(\textbf{A},\bm{\lambda}),\]
which completes the proof.
\end{proof}

\section{Weighted \(\mathcal{A} \sharp \mathcal{H}\)-Mean of Accretive Matrices}
In this section, we extend the definition of the parametrized weighted \(\mathcal{A} \sharp \mathcal{H}\) to the accretive case, and prove some properties.
\begin{definition}
 Let \( A_{i}\in \Gamma_n, i=1, \ldots, m \). Define for \(\mu\geq 0, \mu  <0 \), respectively   
\[\mathcal{L}_{\mu}(\textbf{A},\bm{\lambda})= \Big(\displaystyle \sum_{i=1}^{m}\, \lambda_i (A_i+\mu I)\Big) \sharp \Big(\displaystyle \sum_{i=1}^{m}\, \lambda_i (A_i+\mu I)^{-1}\Big)^{-1} -\mu I.\]
\[\mathcal{L}_{\mu}(\textbf{A},\bm{\lambda})= \mathcal{L}^{-1}_{-\mu}(\textbf{A}^{-1})\;  (\mu <0).\]
If \(\mu=0\), then \(\mathcal{L}(\textbf{A},\bm{\lambda}) =\mathcal{L}_0(\textbf{A},\bm{\lambda})= \mathcal{A}_{\bm{\lambda}}(\textbf{A}) \sharp \mathcal{H}_{\bm{\lambda}}(\textbf{A})\) is called the weighted \(\mathcal{A} \sharp \mathcal{H}\) mean and \(\mathcal{L}_{\mu}(\textbf{A},\bm{\lambda})\) is called the weighted \(\mathcal{A} \sharp \mathcal{H}\) mean of parameter \(\mu\).
\end{definition}
\begin{remark}
Clearly, \(\mathcal{L} _{\mu}(\textbf{A}, \bm{\lambda} )+\mu I\) is accretive, whenever \(A_i, i=1,2,\cdots,m\) are accretive. In fact, if \(A_i\in \Pi_{n,\alpha}\) for each \(i\), then by Lemma \ref{l1} and Remark \ref{sectorial index}, \(\mathcal{L} _{\mu}(\textbf{A}, \bm{\lambda} )+\mu I \in \Pi_{n, \gamma}\), where \(\gamma = \tan^{-1}\frac{\tan\alpha}{1+|\mu|}\).    
\end{remark}

Theorem \ref{G(A,B)=G(Arith,H)-Accretive} below is an analogue of of Lemma \ref{G(A,H)=G(A,B)} generalizing the result from positive matrices to accretive ones.
\begin{theorem}\label{G(A,B)=G(Arith,H)-Accretive}
    Let \(A,B\in \Gamma_n\). Then 
    \[(A+B)\sharp (A^{-1}+B^{-1})^{-1}= A\sharp B.\]
\end{theorem}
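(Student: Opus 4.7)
The plan is to extend the standard positive-definite proof to the accretive setting by replacing Riccati's Lemma with its accretive analogue, namely Drury's Lemma (Lemma \ref{Drury's Lemma}). Set $H=A\sharp B$. The target identity $(A+B)\sharp(A^{-1}+B^{-1})^{-1}=A\sharp B$ will be established by showing that $H$ itself satisfies the defining Riccati-type equation that Drury's Lemma associates to the pair $(A+B,\,(A^{-1}+B^{-1})^{-1})$.

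First I would check that the pieces we plan to feed into Drury's Lemma are actually in $\Gamma_n$. Since $\Re(A+B)=\Re A+\Re B>O$, we have $A+B\in\Gamma_n$; by Lemma \ref{l1}, $A^{-1},B^{-1}\in\Gamma_n$, hence $A^{-1}+B^{-1}\in\Gamma_n$ and therefore $(A^{-1}+B^{-1})^{-1}\in\Gamma_n$. With these sanity checks in hand, I would apply Drury's Lemma to the pair $(A,B)$: the matrix $H=A\sharp B$ is the unique accretive solution of $XA^{-1}X=B$, so $HA^{-1}H=B$. Using the symmetry $A\sharp B=B\sharp A$ (Lemma \ref{Propertiesof the weighted geom mean}, part 2) together with Drury's Lemma applied to $(B,A)$ also gives $HB^{-1}H=A$.

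Adding these two identities yields
\begin{equation*}
H(A^{-1}+B^{-1})H=A+B.
\end{equation*}
Inverting both sides (valid since all factors are accretive, hence invertible by Lemma \ref{l1}) gives
\begin{equation*}
H^{-1}(A^{-1}+B^{-1})^{-1}H^{-1}=(A+B)^{-1},
\end{equation*}
which rearranges to
\begin{equation*}
H(A+B)^{-1}H=(A^{-1}+B^{-1})^{-1}.
\end{equation*}
Now Drury's Lemma applied once more, this time to the accretive pair $\bigl(A+B,\,(A^{-1}+B^{-1})^{-1}\bigr)$, identifies the unique accretive $X$ satisfying $X(A+B)^{-1}X=(A^{-1}+B^{-1})^{-1}$ as $(A+B)\sharp(A^{-1}+B^{-1})^{-1}$. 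Since $H$ is accretive and satisfies this equation, uniqueness forces $H=(A+B)\sharp(A^{-1}+B^{-1})^{-1}$, proving the claim.

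The only real obstacle is verifying that everything stays inside $\Gamma_n$ so that Drury's Lemma (which requires accretive inputs and a unique accretive solution) can legitimately be invoked at each step; the algebra itself is the same short chain as in the positive-definite case, and no spectral-calculus identities are required once the uniqueness machinery is available.
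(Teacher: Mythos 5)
Your proof is correct and follows essentially the same route as the paper: both reduce the claim to the identity $(A\sharp B)(A^{-1}+B^{-1})(A\sharp B)=A+B$ and then conclude via Drury's Lemma. The only cosmetic differences are that the paper verifies $HA^{-1}H=B$ and $HB^{-1}H=A$ by expanding the explicit formula $A^{1/2}\bigl(A^{-1/2}BA^{-1/2}\bigr)^{1/2}A^{1/2}$ rather than quoting Drury's Lemma for the existence direction, and that you insert an extra inversion step where the paper instead invokes commutativity of $\sharp$ at the end.
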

\begin{proof}
    Since \(A\sharp B= B \sharp A\), we have
    \begin{align*}
       ( A\sharp B) (A^{-1}+B^{-1}) ( A\sharp B) &=  ( A\sharp B) (A^{-1}( A\sharp B) + (B \sharp A)B^{-1} (B \sharp A)\\
       &= A^{1/2}\Big(A^{-1/2} B A^{-1/2}\Big)^{1/2}A^{1/2} A^{-1} A^{1/2}\Big(A^{-1/2} B A^{-1/2}\Big)^{1/2}A^{1/2} \\ &+ B^{1/2}\Big(B^{-1/2} A B^{-1/2}\Big)^{1/2}B^{1/2} B^{-1} B^{1/2}\Big(B^{-1/2} A B^{-1/2}\Big)^{1/2}B^{1/2}\\
       &= A^{1/2}\Big(A^{-1/2} B A^{-1/2}\Big)A^{1/2} + B^{1/2}\Big(B^{-1/2} A B^{-1/2}\Big)B^{1/2}\\
       & = B+A.
    \end{align*}
    Therefore, by Drury's Lemma and the commutativity of the geometric mean, \(A\sharp B=  (A+B)\sharp (A^{-1}+B^{-1})^{-1}\).
\end{proof}
\begin{corollary}
   Let \(A,B\in \Gamma_n\) and let \(\bm{\lambda}=(1/2,1/2)\). Then \[\mathcal{L}_\mu(A,B,\bm{\lambda})= \left((A+\mu I)\sharp(B+\mu I)\right)-\mu I.\]
\end{corollary}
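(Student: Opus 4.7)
The plan is to reduce the statement directly to the accretive analogue of the Arithmetic/Harmonic geometric identity that was just proved (Theorem \ref{G(A,B)=G(Arith,H)-Accretive}), via a simple scaling step. Throughout, I focus on the case $\mu\geq 0$ so that the first branch of the definition applies; the case $\mu<0$ will follow from the self-duality $\mathcal{L}_{\mu}=\mathcal{L}^{-1}_{-\mu}(\textbf{A}^{-1})$ together with $(A\sharp B)^{-1}=A^{-1}\sharp B^{-1}$.

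First, I unfold the definition of $\mathcal{L}_{\mu}$ at $m=2$, $\bm{\lambda}=(1/2,1/2)$ to obtain
\[
\mathcal{L}_{\mu}(A,B,\bm{\lambda})=\Big(\tfrac{1}{2}(A+\mu I)+\tfrac{1}{2}(B+\mu I)\Big)\sharp \Big(\tfrac{1}{2}(A+\mu I)^{-1}+\tfrac{1}{2}(B+\mu I)^{-1}\Big)^{-1}-\mu I.
\]
Setting $X=A+\mu I$ and $Y=B+\mu I$, both matrices are accretive (since $A,B\in\Gamma_n$ and $\mu\geq 0$ imply $\Re X,\Re Y>0$), so the geometric mean $X\sharp Y$ and its properties in Lemma \ref{Propertiesof the weighted geom mean} are available. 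The right-hand side of the display above becomes $\tfrac{1}{2}(X+Y)\sharp 2(X^{-1}+Y^{-1})^{-1}-\mu I$.

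Next, I invoke the positive-homogeneity of the geometric mean (Lemma \ref{Propertiesof the weighted geom mean}, part 4) with scalars $h=1/2$, $k=2$ and $\lambda=1/2$:
\[
\tfrac{1}{2}(X+Y)\,\sharp\, 2(X^{-1}+Y^{-1})^{-1}=\bigl(\tfrac{1}{2}\bigr)^{1/2}2^{1/2}\,(X+Y)\sharp (X^{-1}+Y^{-1})^{-1}=(X+Y)\sharp (X^{-1}+Y^{-1})^{-1}.
\]
Then the accretive identity (Theorem \ref{G(A,B)=G(Arith,H)-Accretive}) applied to $X,Y\in\Gamma_n$ gives $(X+Y)\sharp (X^{-1}+Y^{-1})^{-1}=X\sharp Y$, and substituting back yields exactly $(A+\mu I)\sharp (B+\mu I)-\mu I$.

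There is no real obstacle here: the identity is essentially a one-line consequence of Theorem \ref{G(A,B)=G(Arith,H)-Accretive}, and the only subtlety to verify is that $X,Y$ belong to $\Gamma_n$ so that the accretive Drury/Riccati machinery used in that theorem is applicable. The case $\mu<0$ is then handled by applying the already-established $\mu\geq 0$ case to $\textbf{A}^{-1}$ with parameter $-\mu$, combined with the self-duality of both $\mathcal{L}$ and $\sharp$; this reduces to showing that $\bigl((A^{-1}-\mu I)\sharp(B^{-1}-\mu I)+\mu I\bigr)^{-1}=(A+\mu I)\sharp(B+\mu I)-\mu I$, a short algebraic check that mirrors the forward direction.
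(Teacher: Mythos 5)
Your argument for $\mu\geq 0$ is exactly the paper's proof: unfold the definition at $\bm{\lambda}=(1/2,1/2)$, cancel the scalars $1/2$ and $2$ via the homogeneity $(hX)\sharp(kY)=h^{1/2}k^{1/2}(X\sharp Y)$, and apply Theorem \ref{G(A,B)=G(Arith,H)-Accretive} to $X=A+\mu I$, $Y=B+\mu I$. The paper does not address $\mu<0$ at all, and your closing aside on that case deserves caution: for $\mu<0$ the matrices $A+\mu I$ and $B+\mu I$ need not be accretive, so the right-hand side of the claimed identity may not even be defined, and the ``short algebraic check'' you defer is not routine.
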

\begin{proof}
\begin{align*}\mathcal{L}_\mu(A,B,\bm{\lambda}) & =  \left(\frac{1}{2}(A+\mu I) + \frac{1}{2}(B+\mu I) \right) \sharp \left(\frac{1}{2}(A+\mu I)^{-1} + \frac{1}{2}(B+\mu I)^{-1} \right)^{-1} -\mu I\\
&= \left((A+\mu I) + (B+\mu I) \right) \sharp \left((A+\mu I)^{-1} + (B+\mu I)^{-1} \right)^{-1} -\mu I\\
&= \left((A+\mu I)\sharp (B+\mu I)\right) -\mu I \; (\text{by Theorem \ref{G(A,B)=G(Arith,H)-Accretive}}).
\end{align*}
\end{proof}
The following are basic properties of the parametrized weighted \(\mathcal{A} \sharp \mathcal{H}\) mean that follows directly from the definition and some properties of the weighted geometric mean of accretive matrices.
\begin{proposition}
Let \(A_i\in \Gamma_n, i=1,2,\cdots,m\) and let \(\mu\in[-\infty,\infty] \). Then 
   \begin{enumerate}
       \item[1.] \(\mathcal{L}_{\mu}((A,A,\ldots, A),\bm{\lambda})= A\).
       \item[2.] \(\mathcal{L}_{\mu}(k \textbf{A},\bm{\lambda})= k \mathcal{L}_{\mu/k}(\textbf{A},\bm{\lambda}),\, k>0\).
       \item[3.] \(\mathcal{L}_{\mu}(\textbf{A}_{\zeta},\bm{\lambda}_{\zeta})= \mathcal{L}_{\mu}(\textbf{A},\bm{\lambda})\), where \( \zeta\) is any permutation of \(\lbrace1,2,\cdots,m\rbrace, \textbf{A}_{\zeta}= (A_{\zeta(1)},A_{\zeta(2)},\cdots,A_{\zeta(m)})\) and \(\bm{\lambda}_{\zeta}= (\lambda_{\zeta(1)},\lambda_{\zeta(2)},\cdots,\lambda_{\zeta(m)})\).
       \item[4.] \(\mathcal{L}_{\mu}(.)\) is continuous. 
       \item[5.] \(\mathcal{L}_{\mu}(U^*\textbf{A}U,\bm{\lambda})= U^* \mathcal{L}_{\mu}(\textbf{A},\bm{\lambda}) U\) for any unitary matrix \(U\in \mathcal{M}_n\).
       \item[6.] \(\mathcal{L}^{-1}_{-\mu}(\textbf{A}^{-1},\bm{\lambda})= \mathcal{L}_{\mu}(\textbf{A},\bm{\lambda})\). 
   \end{enumerate} 
\end{proposition}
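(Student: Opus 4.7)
The plan is to verify each of the six properties directly from the definition, treating $\mu\geq 0$ and $\mu<0$ separately and reducing the second case to the first through the self-dual defining relation $\mathcal{L}_\mu(\textbf{A},\bm{\lambda})=\mathcal{L}^{-1}_{-\mu}(\textbf{A}^{-1},\bm{\lambda})$. The computations for $\mu\geq 0$ parallel the positive definite proofs of Theorem \ref{prop of A-H mean}; the only change is that the weighted geometric mean $\sharp$ is now taken in the accretive sense, so its basic algebraic identities (idempotency, commutativity, scaling, behavior under inversion) must be invoked through Lemma \ref{Propertiesof the weighted geom mean} rather than in the positive definite form.

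Properties (1), (3), (4) and (5) can be handled first and are essentially immediate. For idempotency, when every $A_i$ equals $A$ both sums $\sum_i \lambda_i(A+\mu I)$ and $\bigl(\sum_i \lambda_i(A+\mu I)^{-1}\bigr)^{-1}$ collapse to $A+\mu I$, and Lemma \ref{Propertiesof the weighted geom mean}(1) gives $(A+\mu I)\sharp(A+\mu I)=A+\mu I$, so subtracting $\mu I$ yields $A$. Permutation invariance is automatic since the sums are independent of the ordering. Unitary invariance reduces to $U^*(A_i+\mu I)U=U^*A_iU+\mu I$, the compatibility of $U^*(\cdot)U$ with sums and inverses, and the unitary invariance of $\sharp$. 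Continuity follows from the continuity of matrix addition, of inversion on the open set $\Gamma_n$, and of the accretive weighted geometric mean.

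For homogeneity (2) with $\mu\geq 0$ I would factor $k$ out of each building block,
\[\sum_i \lambda_i(kA_i+\mu I)=k\sum_i\lambda_i\bigl(A_i+\tfrac{\mu}{k}I\bigr),\qquad \sum_i \lambda_i(kA_i+\mu I)^{-1}=k^{-1}\sum_i\lambda_i\bigl(A_i+\tfrac{\mu}{k}I\bigr)^{-1},\]
apply the scaling identity $(kX)\sharp(kY)=k(X\sharp Y)$ from Lemma \ref{Propertiesof the weighted geom mean}(4) (with weight $1/2$, so that $k^{1-\lambda}k^{\lambda}=k$), and absorb the shift via $-\mu I=-k\cdot\tfrac{\mu}{k}I$ to recover $k\,\mathcal{L}_{\mu/k}(\textbf{A},\bm{\lambda})$. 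The case $\mu<0$ then reduces to the above through the self-dual definition together with an inversion step.

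Self-duality (6) is essentially bookkeeping. For $\mu<0$ it is the definition. For $\mu>0$, since $-\mu<0$, the definition gives $\mathcal{L}_{-\mu}(\textbf{A}^{-1},\bm{\lambda})=\mathcal{L}^{-1}_{\mu}(\textbf{A},\bm{\lambda})$, and inverting both sides yields (6). For $\mu=0$ the claim reduces to $\mathcal{A}_{\bm{\lambda}}(\textbf{A})\sharp\mathcal{H}_{\bm{\lambda}}(\textbf{A})=\bigl(\mathcal{A}_{\bm{\lambda}}(\textbf{A}^{-1})\sharp\mathcal{H}_{\bm{\lambda}}(\textbf{A}^{-1})\bigr)^{-1}$, which follows from commutativity and inversion of $\sharp$ in Lemma \ref{Propertiesof the weighted geom mean}(2,3) together with the observation that $\mathcal{A}_{\bm{\lambda}}$ and $\mathcal{H}_{\bm{\lambda}}$ interchange when all arguments are inverted. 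The one thing to watch throughout is that every intermediate sum stays in $\Gamma_n$ so that the accretive $\sharp$ is legitimately available; this is immediate because $\Gamma_n$ is a convex cone containing $\mu I$ for $\mu\geq 0$ and is closed under inversion by Lemma \ref{l1}, so the main ``obstacle'' is really just a verification that the building blocks are accretive rather than any substantive new inequality.
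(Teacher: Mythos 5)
The paper states this proposition without proof (it is said to ``follow directly from the definition''), so there is no argument to compare against; your verifications of items (1), (3), (4), (5) and (6), and of item (2) for $\mu\ge 0$, are correct and are exactly the routine checks the authors intend.

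There is, however, a genuine gap in your treatment of item (2) for $\mu<0$: the reduction you describe does not produce the stated identity. Carrying out ``the self-dual definition together with an inversion step'' gives, for $\mu<0$ and $k>0$,
\[
\mathcal{L}_{\mu}(k\textbf{A},\bm{\lambda})=\mathcal{L}^{-1}_{-\mu}\bigl(k^{-1}\textbf{A}^{-1},\bm{\lambda}\bigr)=\bigl(k^{-1}\mathcal{L}_{-k\mu}(\textbf{A}^{-1},\bm{\lambda})\bigr)^{-1}=k\,\mathcal{L}^{-1}_{-k\mu}(\textbf{A}^{-1},\bm{\lambda})=k\,\mathcal{L}_{k\mu}(\textbf{A},\bm{\lambda}),
\]
because applying the $\mu\ge 0$ homogeneity with scaling factor $k^{-1}$ turns the parameter $-\mu$ into $(-\mu)/k^{-1}=-k\mu$, not $-\mu/k$. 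So the inversion step lands you at $k\,\mathcal{L}_{k\mu}(\textbf{A},\bm{\lambda})$ rather than $k\,\mathcal{L}_{\mu/k}(\textbf{A},\bm{\lambda})$, and these genuinely differ: for $1\times 1$ matrices with $m=2$ and $\bm{\lambda}=(1/2,1/2)$ one computes $\mathcal{L}_{\mu}(a,b)=\bigl(\sqrt{(a^{-1}-\mu)(b^{-1}-\mu)}+\mu\bigr)^{-1}$ for $\mu<0$, and with $a=1$, $b=4$, $k=2$, $\mu=-1/10$ this gives $\mathcal{L}_{-1/10}(2,8)\approx 3.739$ while $2\,\mathcal{L}_{-1/20}(1,4)\approx 3.912$. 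Hence item (2) as written cannot be proved for $\mu<0$ --- it fails there --- and your argument must either restrict the homogeneity claim to $\mu\ge 0$ (and to $\mu=\pm\infty$, where $\mathcal{L}_{\pm\infty}$ reduce to the arithmetic and harmonic means and the claim is trivial) or record the correct negative-parameter form $\mathcal{L}_{\mu}(k\textbf{A},\bm{\lambda})=k\,\mathcal{L}_{k\mu}(\textbf{A},\bm{\lambda})$. The remaining items survive the passage through the self-dual definition precisely because the operations involved (permutation, conjugation by a unitary, replacing every entry by $A$) commute with entrywise inversion without altering the parameter.
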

  
\begin{proposition}\label{A-H mean is accretive}
Let \(A_i\in \Pi_{n,\alpha}, i=1,2,\cdots,m,\) and let \(\mu\geq 0\). Then
\[     \mathcal{L} _{\mu}(\Re \textbf{A}, \bm{\lambda} )   \leq  \sec \gamma \, \Re \mathcal{L} _{\mu}(\textbf{A}, \bm{\lambda} )+  ( \sec \gamma -1)\mu  I. \]
\end{proposition}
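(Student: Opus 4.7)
The plan is to reduce the stated inequality to a comparison of two geometric-mean expressions by adding $\mu I$ to both sides, and then estimate the arithmetic and harmonic factors separately using Lemma \ref{l1} in combination with Lemma \ref{l-5} and the basic properties of $\sharp$ in Lemma \ref{Propertiesof the weighted geom mean}.

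First I would rewrite the goal in the equivalent form
\[\mathcal{L}_{\mu}(\Re\textbf{A},\bm{\lambda})+\mu I\;\leq\;\sec\gamma\;\Re\!\left(\mathcal{L}_{\mu}(\textbf{A},\bm{\lambda})+\mu I\right),\]
and set $P=\sum_{i=1}^{m}\lambda_i(A_i+\mu I)$, $Q=\sum_{i=1}^{m}\lambda_i(A_i+\mu I)^{-1}$, and $\widetilde Q=\sum_{i=1}^{m}\lambda_i(\Re A_i+\mu I)^{-1}$. Since the real part commutes with the weighted sum, the left side equals $\Re P\,\sharp\,\widetilde Q^{-1}$ while the bracketed quantity on the right equals $P\,\sharp\,Q^{-1}$. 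By Remark \ref{sectorial index}, each $A_i+\mu I$ lies in $\Pi_{n,\gamma}$; since $\Pi_{n,\gamma}$ is closed under inversion (Lemma \ref{l1}) and under positive convex combinations, one has $P,Q,Q^{-1}\in\Pi_{n,\gamma}$, and $\Re P$, $\widetilde Q^{-1}$, $(\Re Q)^{-1}$, $\Re(Q^{-1})$ are all positive definite.

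Next, I would handle the harmonic-side factor. Lemma \ref{l1} applied to each $A_i+\mu I$ gives $\Re((A_i+\mu I)^{-1})\leq(\Re A_i+\mu I)^{-1}$; averaging this in $i$ yields $\Re Q\leq\widetilde Q$, and inverting gives $\widetilde Q^{-1}\leq (\Re Q)^{-1}$. A second application of Lemma \ref{l1}, this time to $Q\in\Pi_{n,\gamma}$, delivers $(\Re Q)^{-1}\leq\sec^2\gamma\,\Re(Q^{-1})$. Chaining these two bounds with the monotonicity of $\sharp$ and its scalar homogeneity $A\sharp(tB)=\sqrt{t}\,(A\sharp B)$ from Lemma \ref{Propertiesof the weighted geom mean}, I obtain
\[\Re P\,\sharp\,\widetilde Q^{-1}\;\leq\;\Re P\,\sharp\,\bigl(\sec^2\gamma\,\Re(Q^{-1})\bigr)\;=\;\sec\gamma\,\bigl(\Re P\,\sharp\,\Re(Q^{-1})\bigr).\]

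Finally, since $P,Q^{-1}\in\Pi_{n,\gamma}\subset\Gamma_n$, Lemma \ref{l-5} yields $\Re P\,\sharp\,\Re(Q^{-1})\leq\Re(P\,\sharp\,Q^{-1})$, so multiplying by $\sec\gamma$ and combining with the previous display gives $\Re P\,\sharp\,\widetilde Q^{-1}\leq\sec\gamma\,\Re(P\,\sharp\,Q^{-1})$, which is the reduced inequality; subtracting $\mu I$ recovers the statement. I do not anticipate a real obstacle beyond bookkeeping: the only subtle point is to keep track of which sector each auxiliary matrix belongs to so that Lemma \ref{l1} can be invoked with the correct angle $\gamma$ on both the arithmetic side (where it is used implicitly through $\Re P\in\mathcal{M}_n^{++}$) and the harmonic side, and to use the scalar scaling of $\sharp$ to collapse the factor $\sec^2\gamma$ into the single $\sec\gamma$ that appears in the conclusion.
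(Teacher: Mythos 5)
Your proposal is correct and follows essentially the same route as the paper's own proof: two applications of Lemma \ref{l1} on the harmonic factor (one termwise, one to the averaged sum $Q\in\Pi_{n,\gamma}$), the homogeneity of $\sharp$ to turn $\sec^2\gamma$ into $\sec\gamma$, and Lemma \ref{l-5} to pull $\Re$ outside the geometric mean. Your consistent use of the angle $\gamma$ for $A_i+\mu I$ is in fact cleaner than the paper's write-up, which slips into $\sec\alpha$ in the final lines even though the stated constant is $\sec\gamma$.
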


\begin{proof}
Let \(\mu \geq 0\). Then 
\begin{align*}
\mathcal{L} _{\mu}(\Re \textbf{A}, \bm{\lambda} )+\mu I &=  \left ( \sum_{i=1}^{m}\lambda_{i} (\Re (A_{i}) +\mu I) \right) \sharp \left ( \sum_{i=1}^{m}\lambda_{i} (\Re (A_{i}) +\mu I)^{-1} \right)^{-1}\\
&= \Re \left ( \sum_{i=1}^{m}\lambda_{i} ( A_{i} +\mu I) \right) \sharp \left ( \sum_{i=1}^{m}\lambda_{i} \Re^{-1}( A_{i} +\mu I) \right)^{-1}  \\
&\leq \Re \left ( \sum_{i=1}^{m}\lambda_{i} ( A_{i} +\mu I) \right) \sharp \left ( \sum_{i=1}^{m} \lambda_{i} \Re(( A_{i} +\mu I)^{-1}) \right)^{-1} \; (\text{by Lemma \ref{l1}})\\
&= \Re \left ( \sum_{i=1}^{m} \lambda_{i} ( A_{i} +\mu I) \right) \sharp \left (\Re \Big(\sum_{i=1}^{m}\lambda_{i} ( A_{i} +\mu I)^{-1} \Big)\right)^{-1} \\
&\leq \sec \alpha\,  \Re \left ( \sum_{i=1}^{m}\lambda_{i} ( A_{i} +\mu I) \right) \sharp \,\Re \left ( \sum_{i=1}^{m}\lambda_{i} ( A_{i} +\mu I)^{-1} \right)^{-1} \; (\text{by Lemma \ref{l1}})\\ 
&\leq  \sec \alpha \, \Re \left ( \left ( \sum_{i=1}^{m}\lambda_{i} ( A_{i} +\mu I) \right) \sharp \left ( \sum_{i=1}^{m} \lambda_{i} ( A_{i} +\mu I)^{-1} \right)^{-1}\right)\; (\text{by Lemma \ref{l-5}}) \\
&= \sec \alpha\, (\Re \mathcal{L} _{\mu}(\textbf{A}, \bm{\lambda} )+\mu I).
\end{align*}
Hence, the proof is complete.
\end{proof}

\begin{remark}
    If \(\mu<0\),then 
    \begin{align*}
      \sec \gamma \; \Re \mathcal{L}^{-1}_{\mu}(\textbf{A}, \bm{\lambda} )&= \sec \gamma\;  \Re \mathcal{L}_{-\mu}(\textbf{A}^{-1}, \bm{\lambda} )\\
      &= \sec \gamma\;  \Re \mathcal{L}_{\nu}(\textbf{A}^{-1}, \bm{\lambda} ), \; (\nu=-\mu>0)\\
      & \geq \mathcal{L}_{\nu}(\bm{\Re}((\textbf{A}^{-1})),\bm{\lambda})+ \nu(1-\sec \gamma) I \; (\text{by Proposition \ref{A-H mean is accretive}})\\
      &\geq \mathcal{L}_{\nu}(\sec^2 \alpha \,\bm{\Re}^{-1}(\textbf{A}),\bm{\lambda})+\nu (1-\sec \gamma) I \; (\text{by Lemma \ref{l1} and Lemma \ref{prop of A-H mean} })\\
      &= \mathcal{L}_{\nu}(\left(\cos^2\alpha\; \bm{\Re}(\textbf{A})\right)^{-1},\bm{\lambda})+\nu (1-\sec \gamma) I \\
      &= \mathcal{L}^{-1}_{\mu}(\cos^2\alpha\; \bm{\Re}(\textbf{A}),\bm{\lambda})- \mu(1-\sec \gamma) I \\
      &= \sec^2 \alpha\;  \mathcal{L}^{-1}_{\omega}(\bm{\Re}(\textbf{A}),\bm{\lambda})+ \mu(\sec \gamma-1) I, \\
    \end{align*}
    where $\omega=\sec^2 \alpha\;\mu.$
\end{remark}

Proposition \ref{real part of A-H reverse ineq} below is the reverse of Proposition \ref{A-H mean is accretive}.

\begin{proposition}\label{real part of A-H reverse ineq}
Let \(A_i\in \Pi_{n,\alpha}, i=1,2,\cdots,m,\) and let \(\mu \geq 0\). Then
\[   \Re  \mathcal{L} _{\mu}( \textbf{A}, \bm{\lambda} )   \leq  \sec^{3} \gamma \,  \mathcal{L} _{\mu}(\Re \textbf{A}, \bm{\lambda} )+  ( \sec^{3} \gamma -1)\mu  I. \]
\end{proposition}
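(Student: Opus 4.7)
The plan is to mirror the proof of Proposition~\ref{A-H mean is accretive}, but replace each use of Lemma~\ref{l-5} and the ``favorable'' direction of Lemma~\ref{l1} with the reverse inequalities coming from Lemma~\ref{l-6} and the second half of Lemma~\ref{l1}. This is where the three powers of $\sec\gamma$ arise: one from Lemma~\ref{l-6}, and two from the two applications of Lemma~\ref{l1} (one on a whole sum, one on each summand).

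First, I would denote $S=\sum_{i=1}^m\lambda_i(A_i+\mu I)$ and $T=\bigl(\sum_{i=1}^m\lambda_i(A_i+\mu I)^{-1}\bigr)^{-1}$, so that $\mathcal{L}_\mu(\textbf{A},\bm{\lambda})+\mu I=S\sharp T$. Since each $A_i+\mu I\in\Pi_{n,\gamma}$, both $S$ and $T$ lie in $\Pi_{n,\gamma}$ (by closure of $\Pi_{n,\gamma}$ under sums and, via Lemma~\ref{l1}, inverses). Hence Lemma~\ref{l-6} yields
\[
\Re(S\sharp T)\;\leq\;\sec^{2}\gamma\,\bigl(\Re S\sharp\Re T\bigr).
\]

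Next I would rewrite the two real parts. Clearly $\Re S=\sum_i\lambda_i(\Re A_i+\mu I)$. For $\Re T$, the first inequality in Lemma~\ref{l1} applied to $\sum_i\lambda_i(A_i+\mu I)^{-1}\in\Pi_{n,\gamma}$ gives
\[
\Re T\;\leq\;\Bigl(\sum_{i=1}^m\lambda_i\,\Re\bigl((A_i+\mu I)^{-1}\bigr)\Bigr)^{-1}.
\]
Then the second inequality in Lemma~\ref{l1}, applied termwise to each $A_i+\mu I$, gives $\Re\bigl((A_i+\mu I)^{-1}\bigr)\geq\cos^{2}\gamma\,(\Re A_i+\mu I)^{-1}$, so after summing and inverting,
\[
\Bigl(\sum_{i=1}^m\lambda_i\,\Re\bigl((A_i+\mu I)^{-1}\bigr)\Bigr)^{-1}\;\leq\;\sec^{2}\gamma\,\Bigl(\sum_{i=1}^m\lambda_i(\Re A_i+\mu I)^{-1}\Bigr)^{-1}.
\]
Combining, $\Re T\leq\sec^{2}\gamma\bigl(\sum_i\lambda_i(\Re A_i+\mu I)^{-1}\bigr)^{-1}$.

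Finally I would plug these into $\Re S\sharp\Re T$ and use monotonicity together with the scaling property $X\sharp(cY)=\sqrt{c}\,(X\sharp Y)$ from Lemma~\ref{Propertiesof the weighted geom mean}(4). This yields
\[
\Re S\sharp\Re T\;\leq\;\Re S\sharp\Bigl(\sec^{2}\gamma\,\bigl(\textstyle\sum_i\lambda_i(\Re A_i+\mu I)^{-1}\bigr)^{-1}\Bigr)=\sec\gamma\,\bigl(\mathcal{L}_\mu(\Re\textbf{A},\bm{\lambda})+\mu I\bigr).
\]
Substituting back,
\[
\Re\mathcal{L}_\mu(\textbf{A},\bm{\lambda})+\mu I=\Re(S\sharp T)\;\leq\;\sec^{3}\gamma\,\bigl(\mathcal{L}_\mu(\Re\textbf{A},\bm{\lambda})+\mu I\bigr),
\]
and transposing the $\mu I$ term gives exactly the claimed bound $\Re\mathcal{L}_\mu(\textbf{A},\bm{\lambda})\leq\sec^{3}\gamma\,\mathcal{L}_\mu(\Re\textbf{A},\bm{\lambda})+(\sec^{3}\gamma-1)\mu I$.

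The main obstacle is purely bookkeeping: getting each $\sec^{2}\gamma$ on the correct side of an inequality and then correctly extracting a single $\sec\gamma$ from the geometric mean via the scaling identity, so that the three loss factors multiply to $\sec^{3}\gamma$ rather than $\sec^{4}\gamma$ or $\sec^{5}\gamma$. There is no new analytic ingredient beyond Lemmas~\ref{Propertiesof the weighted geom mean}, \ref{l1}, and \ref{l-6}.
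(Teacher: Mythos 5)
Your proposal is correct and follows essentially the same route as the paper: one factor of $\sec^{2}\gamma$ from Lemma~\ref{l-6}, then $\Re T\leq\sec^{2}\gamma\bigl(\sum_i\lambda_i(\Re A_i+\mu I)^{-1}\bigr)^{-1}$ via the two halves of Lemma~\ref{l1}, and finally the homogeneity of $\sharp$ converting that inner $\sec^{2}\gamma$ into a single $\sec\gamma$, for a total of $\sec^{3}\gamma$. The only (harmless) difference is organizational: you consolidate the two applications of Lemma~\ref{l1} into a single bound on $\Re T$ before invoking monotonicity of the geometric mean, whereas the paper interleaves them inside the chain of $\sharp$-inequalities.
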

\begin{proof}
\begin{align*}
\Re\mathcal{L} _{\mu}( \textbf{A}, \bm{\lambda} )+\mu I &= \Re \left ( \left ( \sum_{i=1}^{m}\lambda_{i} (A_{i} +\mu I) \right) \sharp \left ( \sum_{i=1}^{m}\lambda_{i} ( A_{i} +\mu I)^{-1} \right)^{-1}\right)\\
&\leq \sec^{2} \gamma\, \left( \Re \left ( \sum_{i=1}^{m}\lambda_{i} ( A_{i} +\mu I) \right) \sharp \Re \left ( \left ( \sum_{i=1}^{m} \lambda_{i} ( A_{i} +\mu I)^{-1} \right)^{-1}\right) \right) \; (\text{by Lemma \ref{l-6}})\\
&\leq \sec^{2} \gamma \, \left ( \sum_{i=1}^{m} \lambda_{i} (\Re A_{i} +\mu I) \right) \sharp \left (  \Re \left ( \sum_{i=1}^{m}\lambda_{i} ( A_{i} +\mu I)^{-1} \right)\right)^{-1} \; (\text{by Lemma \ref{l1}})\\
&= \sec^{2} \gamma  \left ( \sum_{i=1}^{m}\lambda_{i} (\Re A_{i} +\mu I) \right) \sharp \left (    \sum_{i=1}^{m}\lambda_{i}\Re \left( ( A_{i} +\mu I)^{-1} \right)\right)^{-1}\\
&\leq \sec^{2} \gamma  \left ( \sum_{i=1}^{m}\lambda_{i} (\Re A_{i} +\mu I) \right) \sharp \left (    \sum_{i=1}^{m}\lambda_{i}  \cos^{2} \gamma \, ( \Re A_{i} +\mu I)^{-1} \right)^{-1} \; (\text{by Lemma \ref{l1}})\\
&= \sec^{3} \gamma  \left ( \sum_{i=1}^{m}\lambda_{i} (\Re A_{i} +\mu I) \right) \sharp \left (    \sum_{i=1}^{m}\lambda_{i}  ( \Re A_{i} +\mu I)^{-1} \right)^{-1}\\
&= \sec^{3} \gamma  \left( \mathcal{L} _{\mu}( \Re\textbf{A}, \bm{\lambda} )+\mu I \right).  
\end{align*}

Hence, the proof is complete.
\end{proof}

The following theorem provides the accretive version of Theorem \ref{A-H mean and Riccati's Lemma}.
 \begin{theorem}\label{A-H mean and Drury's Lemma}
     \(\mathcal{L}_{\mu}(\textbf{A},\bm{\lambda})+\mu I\) is unique solution of the equation 
     \begin{equation}\label{genealization of Ricccati lemma}
\displaystyle \sum_{i=1}^{m}\, \lambda_i\, X(A_i+\mu I)^{-1} X= \sum_{i=1}^{m}\,\lambda_i (A_i+\mu I).        
     \end{equation}
 \end{theorem}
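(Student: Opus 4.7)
The plan is to recognize equation \eqref{genealization of Ricccati lemma} as a disguised Riccati equation for the accretive geometric mean, and invoke Drury's Lemma (Lemma \ref{Drury's Lemma}). I focus on the case $\mu\geq 0$, where each $A_i+\mu I$ lies in $\Gamma_n$; the case $\mu<0$, if intended, should reduce to $\mu\geq 0$ via the self-duality $\mathcal{L}_\mu(\textbf{A},\bm{\lambda})=\mathcal{L}_{-\mu}^{-1}(\textbf{A}^{-1},\bm{\lambda})$ after substituting $\textbf{A}\mapsto\textbf{A}^{-1}$ and $\mu\mapsto-\mu$.

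Introduce the shorthand
\[P:=\sum_{i=1}^{m}\lambda_i(A_i+\mu I),\qquad Q:=\left(\sum_{i=1}^{m}\lambda_i(A_i+\mu I)^{-1}\right)^{-1}.\]
Both $P$ and $Q$ are accretive, since $\Gamma_n$ is closed under positive shifts, convex combinations, and inversion. By the definition of $\mathcal{L}_\mu$, and by commutativity of the accretive geometric mean (part 2 of Lemma \ref{Propertiesof the weighted geom mean}),
\[Y:=\mathcal{L}_\mu(\textbf{A},\bm{\lambda})+\mu I=P\sharp Q=Q\sharp P.\]

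For existence, I would apply Drury's Lemma to the ordered pair $(Q,P)$, which identifies $Y=Q\sharp P$ as the unique accretive matrix $H$ satisfying $HQ^{-1}H=P$. Distributing $Q^{-1}=\sum_{i=1}^{m}\lambda_i(A_i+\mu I)^{-1}$ inside this Riccati identity yields
\[\sum_{i=1}^{m}\lambda_i\,Y(A_i+\mu I)^{-1}Y \;=\; YQ^{-1}Y \;=\; P \;=\; \sum_{i=1}^{m}\lambda_i(A_i+\mu I),\]
so $Y$ indeed solves \eqref{genealization of Ricccati lemma}. For uniqueness, suppose $X\in\Gamma_n$ is any solution of \eqref{genealization of Ricccati lemma}. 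Factoring $X$ out of the sum rewrites the equation as $XQ^{-1}X=P$, which is precisely Drury's Riccati equation $HA^{-1}H=B$ with $H=X$, $A=Q$, $B=P$. The uniqueness clause of Lemma \ref{Drury's Lemma} then forces $X=Q\sharp P=Y$.

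I expect the only mild obstacle to be the bookkeeping around accretivity: one must verify that $P$, $Q$, and the candidate solution $X$ all lie in $\Gamma_n$ before Drury's Lemma can be applied. Once this is in hand, both existence and uniqueness collapse to a single algebraic rearrangement ($YQ^{-1}Y=P$ on one side, $XQ^{-1}X=P$ on the other), and the theorem follows directly from Lemma \ref{Drury's Lemma} together with the commutativity of the geometric mean.
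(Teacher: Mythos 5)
Your proposal is correct and follows essentially the same route as the paper: factor $X$ out of the sum to rewrite \eqref{genealization of Ricccati lemma} as the Riccati equation $XQ^{-1}X=P$ and invoke Drury's Lemma together with commutativity of the geometric mean. The only difference is cosmetic — you spell out the existence half via Drury's Lemma applied to $(Q,P)$, where the paper simply asserts it from the definition of the geometric mean.
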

 \begin{proof}
     First of all, notice that using the definition of the geometric mean, one can simply show that  \(\mathcal{L}_{\mu}(\textbf{A},\bm{\lambda})+\mu I\) satisfies equation \eqref{genealization of Ricccati lemma}.\\
     On the other hand, if \(X\in \Gamma_n\) satisfies equation \eqref{genealization of Ricccati lemma}, then 
     \begin{align*}
     \sum_{i=1}^{m}\,\lambda_i (A_i+\mu I)& =    \displaystyle \sum_{i=1}^{m}\, \lambda_i\, X(A_i+\mu I)^{-1} X\\
     &= X \Big( \sum_{i=1}^{m}\, \lambda_i\, (A_i+\mu I)^{-1} \Big) X.\\
     \end{align*}
  So, by Drury's Lemma, we have 
  \[X =\Big( \sum_{i=1}^{m}\, \lambda_i\, (A_i+\mu I)^{-1} \Big)^{-1} \sharp  \Big(\sum_{i=1}^{m}\,\lambda_i (A_i+\mu I)  \Big),\] 
  which, by the commutativity of the geometric mean, is nothing but \(\mathcal{L}_{\mu}(\textbf{A},\bm{\lambda})+\mu I\).
  \end{proof}
In \cite{kim}, it was shown that by taking appropriate limits, the parametrized weighted \(\mathcal{A}\sharp \mathcal{H}\) approaches the weighted arithmetic mean and the weighted harmonic mean. The proof uses delicate treatment of a certain function. With the help of Drury's Lemma, we can adopt the same proof as follows.
 \begin{theorem}
     \(\displaystyle \lim_{\mu \rightarrow \infty} \mathcal{L}_{\mu}(\textbf{A},\bm{\lambda})= \mathcal{A}_{\bm{\lambda}}(\textbf{A})\) and \(\displaystyle \lim_{\mu \rightarrow -\infty} \mathcal{L}_{\mu}(\textbf{A},\bm{\lambda})= \mathcal{H}_{\bm{\lambda}}(\textbf{A})\).
 \end{theorem}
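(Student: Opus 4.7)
The plan is to exploit Theorem \ref{A-H mean and Drury's Lemma}: for $\mu\geq 0$, the quantity $X_\mu:=\mathcal{L}_\mu(\textbf{A},\bm{\lambda})+\mu I$ is characterized as the unique accretive solution of
\begin{equation*}
\sum_{i=1}^m \lambda_i X (A_i+\mu I)^{-1} X = \sum_{i=1}^m \lambda_i (A_i+\mu I).\tag{$\ast$}
\end{equation*}
This replaces the delicate analytic argument used in \cite{kim} for the positive definite case and allows a direct asymptotic computation from $(\ast)$.

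For $\mu\to\infty$, I would write $X_\mu=\mu I+W_\mu$ with $W_\mu=\mathcal{L}_\mu(\textbf{A},\bm{\lambda})$, substitute into $(\ast)$, and use the algebraic identity $\mu^2(A_i+\mu I)^{-1}=\mu I-A_i\cdot\mu(A_i+\mu I)^{-1}$. The explicit $\mu I$ terms cancel on both sides, leaving
\begin{equation*}
\sum_{i=1}^m \lambda_i\Bigl[W_\mu P_\mu^{(i)} + P_\mu^{(i)} W_\mu + W_\mu (A_i+\mu I)^{-1} W_\mu - A_i P_\mu^{(i)}\Bigr] = \mathcal{A}_{\bm{\lambda}}(\textbf{A}),
\end{equation*}
where $P_\mu^{(i)}:=\mu(A_i+\mu I)^{-1}\to I$ and $(A_i+\mu I)^{-1}\to 0$ as $\mu\to\infty$. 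Once a uniform bound on $\|W_\mu\|$ is established for large $\mu$, passing to the limit produces $2W_\infty-\mathcal{A}_{\bm{\lambda}}(\textbf{A})=\mathcal{A}_{\bm{\lambda}}(\textbf{A})$, i.e.\ $W_\infty=\mathcal{A}_{\bm{\lambda}}(\textbf{A})$, yielding convergence by uniqueness. I would extract the a priori bound directly from the definition $X_\mu=P_\mu\sharp Q_\mu$, where $P_\mu=\mu I+\mathcal{A}_{\bm{\lambda}}(\textbf{A})$ exactly and $Q_\mu:=\bigl(\sum_i\lambda_i(A_i+\mu I)^{-1}\bigr)^{-1}=\mu I+\mathcal{A}_{\bm{\lambda}}(\textbf{A})+O(1/\mu)$ by a Neumann expansion; the homogeneity and continuity of $\sharp$ on the shrinking sector $\Pi_{n,\gamma}$ (with $\gamma=\tan^{-1}(\tan\alpha/(1+\mu))\to 0$) then controls $\|W_\mu\|$ uniformly.

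The case $\mu\to-\infty$ reduces to the previous one via the self-duality $\mathcal{L}_\mu(\textbf{A},\bm{\lambda})=\mathcal{L}^{-1}_{-\mu}(\textbf{A}^{-1},\bm{\lambda})$: applying the first half to $\textbf{A}^{-1}\in\Gamma_n$ (by Lemma \ref{l1}) with $-\mu\to+\infty$ gives $\mathcal{L}_{-\mu}(\textbf{A}^{-1},\bm{\lambda})\to\mathcal{A}_{\bm{\lambda}}(\textbf{A}^{-1})=\sum_i\lambda_i A_i^{-1}$, so $\mathcal{L}_\mu(\textbf{A},\bm{\lambda})\to\bigl(\sum_i\lambda_i A_i^{-1}\bigr)^{-1}=\mathcal{H}_{\bm{\lambda}}(\textbf{A})$. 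The main obstacle is the uniform boundedness of $\{W_\mu\}$: in the positive definite case of \cite{kim} the sandwich $\mathcal{H}_{\bm{\lambda}}\leq\mathcal{L}_\mu\leq\mathcal{A}_{\bm{\lambda}}$ bounds $W_\mu$ trivially, but no such clean matrix inequality is available in the accretive setting, so the estimate must be extracted from the asymptotic structure of $P_\mu$ and $Q_\mu$ themselves together with the sectorial control provided by Lemma \ref{l1}.
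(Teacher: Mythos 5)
Your proposal is correct in substance and rests on the same pillar as the paper's own proof, namely the Riccati characterization of $X_\mu=\mathcal{L}_{\mu}(\textbf{A},\bm{\lambda})+\mu I$ supplied by Drury's Lemma, but you run the asymptotics in a genuinely different way. The paper first uses homogeneity of $\sharp$ to write $\mathcal{L}_{\mu}(\textbf{A},\bm{\lambda})=\frac{g(t)-g(0)}{t}$ with $t=1/\mu$ and $g(t)=\bigl(\sum_i\lambda_i(tA_i+I)\bigr)\sharp\bigl(\sum_i\lambda_i(tA_i+I)^{-1}\bigr)^{-1}$, and then differentiates the identity $g(t)M(t)g(t)=N(t)$ implicitly at $t=0$ to obtain $g'(0)=\mathcal{A}_{\bm{\lambda}}(\textbf{A})$; the limit is thus read off as a one-sided derivative. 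Your substitution $X_\mu=\mu I+W_\mu$ followed by passage to the limit in the unrescaled equation is the same computation in the variable $\mu$ rather than $t=1/\mu$, at the cost of needing the extra compactness input (uniform boundedness of $W_\mu$) that the derivative formulation avoids. Two observations. First, your auxiliary estimate is stronger than you need: the Neumann expansion $Q_\mu=\mu I+\mathcal{A}_{\bm{\lambda}}(\textbf{A})+O(1/\mu)$ together with the first-order behaviour $(I+E)\sharp(I+F)=I+\tfrac{1}{2}(E+F)+O(\|E\|^2+\|F\|^2)$ near $(I,I)$ already yields $W_\mu=\mathcal{A}_{\bm{\lambda}}(\textbf{A})+O(1/\mu)$ directly, so the Riccati limiting argument becomes redundant; either half of your argument suffices on its own. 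Second, both proofs lean on a regularity fact about the accretive geometric mean near $(I,I)$ that should be stated explicitly --- the paper's claim that $g$ is differentiable at $0$, or your expansion of $\sharp$ at $(I,I)$; this holds because $A\sharp B$ is given by an analytic formula in its arguments, and it is the one point where the accretive setting (where the sandwich $\mathcal{H}_{\bm{\lambda}}\leq\mathcal{L}_\mu\leq\mathcal{A}_{\bm{\lambda}}$ is unavailable, as you rightly note) requires a word beyond the positive definite case. Your reduction of $\mu\rightarrow-\infty$ to $\mu\rightarrow+\infty$ via self-duality and Lemma \ref{l1} is identical to the paper's.
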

 \begin{proof}
Let \(\mu >0\) and set \(g(t) =\sum_{i=1}^{m}\, \lambda_i\, (t A_i+ I) \sharp \sum_{i=1}^{m}\, \lambda_i\, (tA_i+ I)^{-1} \Big)^{-1}\). Clearly, \(g(0)=I\). \\
Notice that by Drury's Lemma, we have \[g(t)\Big(\sum_{i=1}^{m}\, \lambda_i\, (t A_i+ I)^{-1} \Big) g(t)= \sum_{i=1}^{m}\, \lambda_i\, (t A_i+ I). \]
Since \(g\) is defined on a neighborhood of \(0\) and is differentiable at \(0\), differentiating both sides with respect to \(t\), and setting \(t=0\) yields
\[g^{\prime}(0) - \sum_{i=1}^{m}\, \bm{\lambda}_i A_i +g^{\prime}(0)= \sum_{i=1}^{m}\, \lambda_i A_i \,.\]
Consequently, we get
\begin{equation}\label{Arithmatic mean as the derivative of g}
\mathcal{A}_{\bm{\lambda}}(\textbf{A})= \sum_{i=1}^{m}\, \lambda_i A_i = g^{\prime}(0)= \displaystyle \lim_{t \rightarrow 0^+} \,\dfrac{g(t)-g(0)}{t}\,.
\end{equation}
Now,
\begin{equation*}
\begin{split}
    \mathcal{L}_{\mu}(\textbf{A},\bm{\lambda})& = \Big(\mathcal{A}_{\mu,\bm{\lambda}}(\textbf{A}) \sharp \mathcal{H}_{\mu \bm{\lambda}}(A)\Big)-\mu I\\
    &= \mu \Big( \left(\sum_{i=1}^{m}\, \lambda_i\, (\mu^{-1} A_i+ I) \sharp \Big(\sum_{i=1}^{m}\, \lambda_i\, (\mu^{-1}A_i+ I)^{-1} \Big)^{-1}\right)-I\Big)\\
    &= \Big(\dfrac{g(t)-g(0)}{t}; \quad t=1/\mu\Big).
\end{split}
\end{equation*}
Thus, by \eqref{Arithmatic mean as the derivative of g}, we get
\[\mathcal{A}_{\bm{\lambda}}(\textbf{A})=\displaystyle \lim_{t \rightarrow 0^+} \,\dfrac{g(t)-g(0)}{t}= \displaystyle \lim_{\mu \rightarrow \infty}\, \mathcal{L}_{\mu,\bm{\lambda}}(\textbf{A}).\]

Now, from \(\mathcal{L}_{\mu}(\textbf{A})= \mathcal{L}^{-1}_{-\mu}(\textbf{A}^{-1})\) if \(\mu<0\), we get 
\[\displaystyle \lim_{\mu\rightarrow -\infty}\, \mathcal{L}_{\mu}(\textbf{A})= \displaystyle \lim_{-\mu\rightarrow \infty}\, \mathcal{L}_{-\mu}(\textbf{A})= \displaystyle \lim_{\mu\rightarrow \infty}\, \mathcal{L}^{-1}_{\mu}(\textbf{A}^{-1})= \mathcal{A}^{-1}_{\bm{\lambda}}(\textbf{A}^{-1})= \mathcal{H}_{\bm{\lambda}}(\textbf{A}).\]
 \end{proof}

\section*{Acknowledgements}
Not applicable.


\end{document}